\theoremstyle{plain}
\newtheorem{theorem}{Theorem}[section]
\newtheorem{lemma}[theorem]{Lemma}
\newtheorem{proposition}[theorem]{Proposition}
\numberwithin{equation}{section}
\theoremstyle{definition}
\newtheorem{remark}[theorem]{Remark}
\newcommand{\ep}{\varepsilon}
\title[Multiple  helical vortices]{Dynamics and leapfrogging phenomena of multiple helical vortices for  3D incompressible Euler equations}
\author{Daomin Cao, Junhong Fan, Guolin Qin, Jie Wan}
\address{State Key Laboratory of Mathematical Sciences, Academy of Mathematics and Systems Science, Chinese Academy of Sciences, Beijing 100190, P.R. China and University of Chinese Academy of Sciences, Beijing 100049,  P.R. China}
\email{dmcao@amt.ac.cn}
\address{Institute of Applied Mathematics, AMSS, Chinese Academy of Sciences, Beijing 100190, and University of Chinese Academy of Sciences, Beijing 100049,  P.R. China}
\email{fanjunhong@amss.ac.cn}
\address{State Key Laboratory of Mathematical Sciences, Academy of Mathematics and Systems Science, Chinese Academy of Sciences, Beijing 100190, P.R. China}
\email{qinguolin18@mails.ucas.ac.cn}
\address{School of Mathematics and Statistics, Beijing Institute of Technology, Beijing 100081, P.R. China}
\email{wanjie@bit.edu.cn}
\begin{document}

\begin{abstract}
	In this paper, we investigate the time evolution of helical vortices without swirl for the incompressible Euler equations in  $\mathbb R^3$ under general  initial assumptions. Assume the initial helical vorticity is sharply concentrated in $N$ distinct  $\ep$-neighborhoods, whose mutual distances vanish as   $O(1/|\ln \ep|)$, and each vortex core possesses vorticity mass  of order $1/|\ln \ep|^{1+b}$ for an arbitrary fixed  $b\in\mathbb R$. We prove that as  $\ep\to 0$, the motion of these helical vortices converges uniformly to a dynamical system derived herein over a time interval of order  $1/|\ln\varepsilon|^{1-b}$. In the particular case  $b=-1$, our results establish the evolution counterpart for interacting vortex helices constructed in [I. Guerra, M. Musso, Ann. Inst. H. Poincar\'e C Anal. Non Lin\'aire, 2025]. Notably, for two interacting helical vortices with initial mutual distance  $ \rho_0/|\ln \ep|$, by choosing $\rho_0$ sufficiently small, our analysis extends to timescales covering multiple periods. This result provides the first mathematical justification for the numerically observed phenomenon termed ``leapfrogging of Kelvin waves" reported in [N. Hietala et al., Phys. Rev. Fluids, 2016].
\end{abstract}

 \maketitle{\small{\bf Keywords:}  Incompressible Euler equations, helical vortex, dynamical system, leapfrogging phenomena.   \\	
	
	{\bf 2020 MSC} Primary: 76B47; Secondary:  37N10.}

\section{Introduction}
\label{sec:introduction}

We consider the three-dimensional incompressible Euler equations
\begin{equation}\label{Euler}
    \left\{
\begin{array}{ll}
\partial_t u+(u\cdot\nabla)u=-\nabla P  & \text{in} \quad \mathbb R^3 \times \mathbb{R}_+, \\
\mathrm{div}(u)=0 & \text{in} \quad \mathbb R^3 \times \mathbb{R}_+,\\
u(\cdot,0)=u_0  &\text{in} \quad \mathbb R^3,
\end{array}
\right.
\end{equation}
where   $u:\mathbb R^3 \times \mathbb{R}_+\to \mathbb{R}^3$ denotes the velocity of the fluid and $P$ the scalar pressure. For a solution  $u $ of \eqref{Euler}, the corresponding vorticity field of the fluid is defined by $$\bm{\omega} :=\nabla\times u.$$

Taking curl of the first equation in \eqref{Euler}, we obtain the following Euler equations of vorticity form:
\begin{align}\label{Eulerw}
	\begin{cases}
		\partial_t \bm{\omega} +(u\cdot\nabla)\bm{\omega} =(\bm{\omega} \cdot\nabla)u,
		\ \ & \text{in} \quad \mathbb R^3 \times \mathbb{R}_+,\\
		\bm{\omega} (\cdot, 0)=\nabla\times u_0(\cdot), \ \ & \text{in} \quad \mathbb R^3.
	\end{cases}
\end{align}
The velocity field $u$ in \eqref{Eulerw} can be recovered from $\bm{\omega}$ by the so-called Biot-Savart law:
\begin{equation}\label{Biot-Savart}
	u=\nabla\times(-\Delta)^{-1}\bm{\omega}.
\end{equation}

\subsection{Backgrounds}

In this paper, we are interested in the dynamics of flow, referred to as vortex filaments,  when the   vorticity is supported in a very small neighborhood of a smooth curve. The study of such problem can be traced back to Helmholtz's classical paper in 1858 \cite{Hel}.  Helmholtz \cite{Hel} first found that vorticity fields of \eqref{Eulerw} with axial symmetry (called the vortex rings)  supported in a toroidal region with small cross-section near a circular curve have an approximately steady form and travel with a large constant velocity along the axis of the ring. For general curves, the first derivation of the motion law of curves dates back to the work of Da Rios  in 1906 \cite{DR} and  Levi-Civita  in 1908 \cite{LC,LC2} with the help of potential theory. Consider a vortex tube with a small cross-section of radius $\ep$ and a fixed circulation $ \kappa $, uniformly distributed around an evolving smooth curve $\Gamma (t) $. By using the Biot-Savart law and calculating the leading order of instantaneous velocity of the curve, \cite{DR} indicates that the  curve $\Gamma (t) $ evolves by its \textit{binormal curvature flow}: let $\Gamma(t)$ be parameterized by $\chi(s,t)$ with $s$ the arc length parameter, then the motion of $\chi$ would asymptotically  obey a law of the form
\begin{equation}\label{BNflow}
	\partial_t \chi=\frac{\kappa}{4\pi}|\ln\varepsilon|(\partial_s\chi\times\partial_{ss}\chi),
\end{equation}
where $\kappa$ is the circulation of the flow.

A rigorous mathematical justification of the relation between vorticity field of \eqref{Eulerw} and the curve \eqref{BNflow} remains a subject of great concerned, known as \emph{vortex filament conjecture}. We refer to   \cite{FV, JS} and references therein for comprehensive introduction and some recent  development on this conjecture.  Numerous partial results to the vortex filament conjecture, concerning long time location of vorticity are available for specific  curves. We refer to, e.g. \cite{BM, CJY, DL, MP, Tur87} for the case of straight vortex filaments and related results. Time evolution and dynamics for vortex rings without swirl were studied in, e.g. \cite{BCM, BCM1, BCM2, BCM3, BM2, CFLQ, DHLM}. The third particular curve satisfying \eqref{BNflow},  corresponding to the traveling-rotating helix, has recently garnered widespread attentions. By solving a suitably chosen elliptic PDE, D\'avia et al. \cite{DDMW2} constructed  traveling-rotating vortex helices that do not change their shape as time evolves. We refer to \cite{CW2, GM} and references therein for more literature on the construction of helical vortices with specific initial data. Subsequently, Donati et al. \cite{DLM} first studied the evolution of helical vortices with general vorticity, where it is  proved that any helical vorticity solution initially highly concentrated around helices of pairwise distinct radii remains concentrated in a small neighborhood of filaments for $t\in[0, T/|\ln\ep|)$.  This result was later improved to longer time interval $[0, T)$ in \cite{GZ} for single vortex helix by using a different method. We would like to mention the recent breakthroughs on the evolution of vortex rings in viscous fluids \cite{BCM, GS2}.

\smallskip

In this paper, we are interested in the interaction of multiple vortex filaments. It has been  shown in \cite{BCM1} that the interactions of vortex rings, which are initially separated from each other by distances with a positive lower bound independent  of  the sizes of cross-section, can be neglected compared to their self-interaction. Therefore, each vortex ring dynamics follows the same law as  those of isolated vortex rings.  Similar conclusion was then obtained for helical vortices in \cite{DLM}. However, a well-known  phenomenon, termed ``leapfrogging of vortex rings", was predicted in \cite{Hel}, which states that  in particular conditions, two coaxial vortex rings can successively pass through each other in a periodic motion. Rigorous verification of this phenomenon for vortex rings for the Euler equations remains a long standing open problem until the work by D$\acute{\text{a}}$vila et al. \cite{DDMW} in a constructive way via selecting special initial vorticity. For a general class of initial data corresponding to multiple concentrated vortex rings, localization estimates and convergence to   dynamical systems accounting for the leapfrogging phenomena was studied in \cite{BCM3, DHLM} respectively under different scaling regimes. In particular, for the case of two vortex rings with sufficiently large radius, \cite{BCM3} showed that leapfrogging phenomenon occurs at least for several overtakings. For constructions of leapfrogging vortex patches for the 2D Euler equations  and corresponding vortex ring configurations for Gross-Pitaevskii systems, we refer to \cite{HHM} and \cite{JS2}, respectively.

Leapfrogging phenomena for helical vortices, referred to as ``leapfrogging of Kelvin waves" or ``leapfrogging of helical vortex filaments", were also observed  in many numerical and experimental studies, see e.g. \cite{CL, DSR, HHSB} and references therein. However, rigorous mathematical verification of such phenomena  still remains as an outstanding open problem.

In this paper, we will investigate the interaction of multiple helical vortices  without  swirl under suitable setting, establishing  strong localization estimates and convergence to a dynamical system for a positive time independent of $\ep$ as $\ep\to0$. In particular, for two helical vortices with sufficiently small initial mutual distance, we improve our result to reach longer time interval to cover  any given number of  periods of the motion for the derived dynamical system. This provides the first justification of leapfrogging phenomena for helical vortices.

  \subsection{Main result} In order to introduce our main results, we need some definitions.  For $x\in\mathbb R^2$ and $r>0$, we shall denote $B(x,r)$ to the disk centered at $x$ with radius $r$. For  fix pitch $h>0$,  we define the following operators for all $\theta\in \mathbb{R}$:
\begin{equation*}
    R_{\theta}=\begin{pmatrix}
        \cos\theta & -\sin\theta&0\\\sin\theta&\cos\theta&0\\0&0&1
    \end{pmatrix}
    \quad and \quad S_{\theta,h}x=R_\theta x+h \begin{pmatrix}
        0\\0\\\theta
    \end{pmatrix},\quad x\in \mathbb R^3.
\end{equation*}
 We say that $(u,P)$ is a helical solution to \eqref{Euler}   if $u$ is a helical vector field and $P$ is a helical function in the following sense:
\begin{equation*}
    u(S_{\theta,h}x)=R_\theta u(x),\quad P(S_{\theta,h}x)=P(x),\quad \forall \theta \in \mathbb{R}.
\end{equation*}
A vector field $u:\mathbb R^3\mapsto\mathbb R^3$ is said to be helical without swirl if  $u $ is helical and satisfies
\begin{equation}\label{noswirl}
    u(x)\cdot \xi(x)\equiv 0,  \quad \xi(x)=\begin{pmatrix}
        -x_2\\x_1\\h
    \end{pmatrix}.
\end{equation}
The key observation is that the without swirl condition implies that the vorticity is parallel to $\xi$ which allows us to define a scalar quantity $\omega$:
\begin{equation}
    \nabla\times u(x,t)=\frac{1}{h}\omega(\tilde{R}_{-\frac{x_3}{h}}(x_1,x_2),t) \xi(x),\quad \text{where}\quad \tilde{R}_{\theta}=\begin{pmatrix}
        \cos\theta&-\sin\theta\\\sin\theta&\cos\theta
    \end{pmatrix}.
\end{equation}
Then, it was proved in e.g. \cite{Du, ET} that the Euler equation \eqref{Eulerw}-\eqref{Biot-Savart} can be reduced to a two-dimensional system for the vorticity posed on the plane (we consider the system after time rescaling by $|\ln\varepsilon|^{1-b}$, and $b\in\mathbb{R}$):
\begin{equation}\label{HEuler}
    \left\{
\begin{array}{ll}
\partial_t\omega+\frac{1}{|\ln\varepsilon|^{1-b}}v\cdot\nabla\omega=0  & \text{in} \quad \mathbb R^2 \times \mathbb{R}_+, \\
v=\nabla^\perp \Psi, \ \ \  \mathrm{div}(K\nabla\Psi)=\omega& \text{in} \quad \mathbb R^2 \times \mathbb{R}_+,\\
\omega(\cdot,0)=\omega_0& \text{in} \quad \mathbb R^2,
\end{array}
\right.
\end{equation}
where the convention $(a,b)^\perp=(-b,a)$ and $K$ is a symmetric positive-definite matrix given by
\begin{equation}\label{def K}
    K(x)=\frac{1}{x_1^2+x_2^2+h^2}\begin{pmatrix}
        h^2+x_2^2&-x_1x_2\\-x_1x_2&h^2+x_1^2
    \end{pmatrix}.
\end{equation}
Global well-posedness for the Cauchy problem \eqref{HEuler} are well-understood after seminal works \cite{Abi, BLN, Du, ET, GZ0, JLN}. Nevertheless, similar as the 2D Euler equations, dynamics and long time behavior for solutions of  \eqref{HEuler} remain  unclear. In this paper, we will study dynamics of solutions to \eqref{HEuler} with concentrated initial vorticity.

Inspired by the setting of initial data for  vortex rings considered in \cite{BCM, BCM1, BCM2, BCM3, BM2},  we study the time evolution of an incompressible non-viscous fluid, in the case of helical symmetry without swirl, when the initial vorticity is supported and sharply concentrated in $N$ helices of radius of leading term $r_0$, thickness of order $ \varepsilon $, vorticity mass (in the cross section) of order $1/|\ln\varepsilon|^{1+b}$ with arbitrarily fixed $b\in\mathbb R$, and distance from each other of order $1/|\ln \varepsilon|$. We remark that our assumption on vorticity mass is extremely general due to the arbitrariness of the parameter $b$. In particular, taking $b=-1$, our results can be viewed as the evolution counterpart for interacting vortex helices as considered in \cite{GM}.

To be precise, letting $x_0=(r_0,0)$ be a fixed point with $r_0\geq 0$ and $\varepsilon_0$ is small enough (will depend on $b$),  we assume that the initial data satisfies the following assumptions:
\begin{equation}\label{1}
	\omega_{\varepsilon}(x,0)=\sum^N_{i=1}\omega_{i,\varepsilon}(x,0)\quad \forall\ \varepsilon\in(0,\varepsilon_0),
\end{equation}
where each $\omega_i(x,0)$ has a definite sign, and satisfies the following assumptions
\begin{equation}\label{2}
	|\omega_{i,\varepsilon}(x,0)|\leq\frac{M}{\varepsilon^2},\quad \text{for some}\ \ M>0,
\end{equation}
\begin{equation}\label{3}
	\int\omega_{i,\varepsilon}(x,0)dx=\gamma_i=\frac{a_i}{|\ln\varepsilon|^{1+b}},\quad \text{for some}\ \ a_i\neq 0,
\end{equation}
and
\begin{equation}\label{4}
	\Lambda_{i,\varepsilon}(0):=\mathrm{supp} \ \omega_{i,\varepsilon}(\cdot,0)\subset B(x_0+{P_i^0}/{|\ln \varepsilon|},\varepsilon),
\end{equation}
where $P_i^0\in \mathbb R^2$ ($i=1,\ldots, N$) are given points such that $P_i^0\not=P_j^0$ for $i,j\in\{ 1,\ldots, N \}$, $i\not=j$.

Then the corresponding solution to \eqref{HEuler} can be written as:
\begin{equation}\label{5}
	\omega_{\varepsilon}(x,t)=\sum^N_{i=1}\omega_{i,\varepsilon}(x,t)\quad \forall \varepsilon\in(0,\varepsilon_0).
\end{equation}

To give the limiting dynamic system, let us first introduce some notations. Define
\begin{equation}\label{Tau}
	 g(s)=\frac{1}{2(h\sqrt{s+h^2}+h^2)},\quad \tau(s)=\exp\left(\int^s_0g(z)dz\right)\geq1,\quad\forall s\geq 0,
\end{equation}
and for $x\in \mathbb R^2$ define the $C^1$-deformation $T $  by
$$	T (x)=\tau(|x|^2) x.$$
Then by direct computations, we have
\begin{equation}\label{DT}
	DT (x)=\tau(|x|^2)\left(I_2+\frac{1}{h^2+h\sqrt{h^2+|x|^2}}\begin{pmatrix}
		x_1^2&x_1x_2\\x_1x_2&x_2^2
	\end{pmatrix}\right), \ \  \text{where}\ \  I_2=\begin{pmatrix}
		1 & 0 \\ 0&1
	\end{pmatrix}.
\end{equation}

Our first discovery in this paper is that, formally speaking,  the $N$-helical vortex system with the given initial conditions as above   will approximately move according to the pattern of an ODE dynamical system. To our knowledge, this dynamical system has not been reported in other literature.
\begin{proposition}[Formal result]\label{pro-ds}
	  Assume that the $i$-th vortex helix $\omega_{i,\varepsilon}(x,t)$ remains concentrated around $x_0+\frac{ P_{i,\varepsilon}(t)}{|\ln\ep| } $ for $t\in \left(0, \frac{T}{|\ln\varepsilon|^{1-b}}\right)$. Then  the rescaled center  $P_{i,\varepsilon}(t)$ converges as $\ep \to 0$ to a limiting trajectory $t\mapsto P_i(t)$, and after applying a transformation $\tilde{P}_i(t)=DT(x_0)P_i(t)$, this trajectory satisfies
 \begin{equation}\label{ds}
 	\begin{cases}
 		\partial_t \tilde{P}_i=A\sum_{j\neq i}a_j\frac{(\tilde{P}_i-\tilde{P}_j)^\perp}{|\tilde{P}_i-\tilde{P}_j|^2}-a_iB\begin{pmatrix}
 			0\\1
 		\end{pmatrix},\\
 		\tilde{P}_i(0)=\tilde{P}_i^0, \quad i=1,...,N,
 	\end{cases}
 \end{equation}
 where  $\tilde{P}_i^0=DT(x_0)P_i^0$($i=1,\ldots,N$), $A$ and $B$ are constants given by
 \begin{equation}\label{def ab}
 	A=\frac{\sqrt{h^2+r_0^2}\left(r_0^2+h^2+h\sqrt{h^2+r_0^2}\right)}{2\pi h\left(h^2+h\sqrt{h^2+r_0^2}\right)},\quad B=\frac{\tau(r_0^2)r_0}{4\pi h\sqrt{h^2+r_0^2}}.
 \end{equation}
\end{proposition}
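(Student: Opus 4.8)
The plan is to derive \eqref{ds} by following the rescaled centres of the cores and computing, to leading order as $\ep\to0$, the velocity that transports each of them; since the statement is formal, the concentration of $\omega_{i,\ep}(\cdot,t)$ around $x_0+P_{i,\ep}(t)/|\ln\ep|$ is taken as a hypothesis, and the task is the asymptotic expansion. Concretely, I would set $B_{i,\ep}(t):=\gamma_i^{-1}\int x\,\omega_{i,\ep}(x,t)\,dx$ and $P_{i,\ep}(t):=|\ln\ep|\,(B_{i,\ep}(t)-x_0)$ (consistent with \eqref{4} under the concentration hypothesis), and use that $v_\ep=\nabla^\perp\Psi_\ep$ is divergence free together with $\partial_t\omega_{i,\ep}+v_\ep\cdot\nabla\omega_{i,\ep}=0$ (valid since the supports stay pairwise disjoint) to obtain
\[
\dot P_{i,\ep}=\frac{|\ln\ep|}{\gamma_i}\int v_\ep\,\omega_{i,\ep}\,dx,\qquad v_\ep=\sum_{j=1}^{N}v_{j,\ep},\quad \mathrm{div}(K\nabla\Psi_{j,\ep})=\omega_{j,\ep}.
\]
Splitting the sum into the self term $j=i$ and the mutual terms $j\neq i$, everything reduces to understanding the Green's function $G_K$ of $\mathrm{div}(K\nabla\cdot)$ near $x_0$.

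This is where the role of the map $T$ and of \eqref{DT} becomes clear: one checks $DT(x)=\tau(|x|^2)K(x)^{-1/2}$, hence $DT(x)K(x)DT(x)^{\top}=\tau(|x|^2)^2 I$, so that (using $\det K(x)=h^2/(h^2+|x|^2)$ and the Piola identity) the operator $\mathrm{div}(K\nabla\cdot)$ is conjugated, under $y=T(x)$, to $\mathrm{div}_y(\sqrt{\det K}\,\nabla_y\cdot)$; consequently, near $x_0$,
\[
G_K(x,x')=\frac{\sqrt{R(x)R(x')}}{2\pi h}\,\ln\bigl|T(x)-T(x')\bigr|+G^{\mathrm{reg}}(x,x'),\qquad R(x):=\sqrt{h^2+|x|^2},
\]
with $G^{\mathrm{reg}}$ of lower order. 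For $j\neq i$ this yields, to leading order, $v_{j,\ep}(x)\approx\frac{\gamma_j R_0}{2\pi h}\bigl(DT(x_0)^{\top}\tfrac{T(x)-T(B_{j,\ep})}{|T(x)-T(B_{j,\ep})|^{2}}\bigr)^{\!\perp}$, with $R_0=\sqrt{h^2+r_0^2}$; evaluating at $B_{i,\ep}$, inserting $T(B_{i,\ep})-T(B_{j,\ep})=|\ln\ep|^{-1}DT(x_0)(P_{i,\ep}-P_{j,\ep})+O(|\ln\ep|^{-2})$, integrating against $\omega_{i,\ep}$, multiplying by $|\ln\ep|/\gamma_i$ and using $|\ln\ep|^{2}\gamma_j=a_j$, one collapses the mutual contribution — after the substitution $\tilde P=DT(x_0)P$, the symmetry of $DT(x_0)$, and the elementary identity $S(Sw)^{\perp}=(\det S)\,w^{\perp}$ for symmetric $2\times2$ matrices — into the interaction term $A\sum_{j\neq i}a_j\frac{(\tilde P_i-\tilde P_j)^{\perp}}{|\tilde P_i-\tilde P_j|^{2}}$ of \eqref{ds}, with $A$ the constant in \eqref{def ab}.

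The delicate term is the self term, which produces the drift $-a_iB(0,1)^{\top}$ — the helical analogue of the self-propulsion of a vortex ring. The leading logarithmic singularity of $\nabla_x^{\perp}G_K$ integrates to zero against $\omega_{i,\ep}\otimes\omega_{i,\ep}$ by antisymmetry, but the next order, coming from the coupling of the $\ln|x-x'|$ singularity with the non-constancy of $K$ near $x_0$ (equivalently, the non-affineness of $T$, which surfaces through the variation of the prefactor $\sqrt{R(x)R(x')}$ in $G_K$), contributes a term proportional to $\gamma_i^{2}\ln\ep$: since the cores have radius $\ep$, $\iint\ln|T(x)-T(x')|\,\omega_{i,\ep}(x)\omega_{i,\ep}(x')\,dx\,dx'=\gamma_i^{2}\ln\ep+O(\gamma_i^{2})$, and after the factor $|\ln\ep|/\gamma_i$, using $|\ln\ep|\gamma_i\ln\ep=-a_i+o(1)$, this survives at order one. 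Discarding the anisotropic $\ln\ep$-pieces by the (assumed) near-symmetry of $\omega_{i,\ep}(\cdot,t)$ about $B_{i,\ep}$ and keeping the isotropic one, a short computation of $\nabla^{\perp}\sqrt{R}(x_0)=\bigl(0,\tfrac{r_0}{2R_0^{3/2}}\bigr)$ together with applying $DT(x_0)$ identifies the self contribution to $\dot P_{i,\ep}$, hence to $\dot{\tilde P}_{i,\ep}$, with $-a_iB(0,1)^{\top}$, $B$ as in \eqref{def ab}.

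Collecting the two contributions yields, to leading order, a closed system for $(\tilde P_{i,\ep})_{i}$; passing to the limit $\ep\to0$ gives \eqref{ds}, whose limit trajectory exists and is unique on a maximal interval containing $t=0$ by the Cauchy--Lipschitz theorem, the right-hand side of \eqref{ds} being smooth as long as the $\tilde P_i$ stay pairwise distinct — which holds near $t=0$ since $\tilde P_i^{0}\neq\tilde P_j^{0}$. I expect the genuine obstacle — deliberately set aside in this \emph{formal} statement — to be the two leading-order reductions: (i) collapsing the interaction integral $\int v_{j,\ep}\omega_{i,\ep}$ onto the centre $B_{i,\ep}$, which requires quantitative concentration of $\omega_{i,\ep}(\cdot,t)$ at scale $o(1/|\ln\ep|)$; and (ii) ruling out, via an approximate symmetry of the deformed cores, the anisotropic $\ln\ep$-contributions to the self-induced drift so that only the clean $-a_iB(0,1)^{\top}$ term remains. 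In the present proposition these are imposed as hypotheses, so the argument is precisely the expansion outlined above; supplying (i)--(ii) rigorously is what the paper's subsequent (non-formal) theorems have to do.
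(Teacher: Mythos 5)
Your argument follows essentially the same route as the paper's proof: differentiate the centre of mass, decompose $\nabla^\perp\mathcal{G}_K$ using the structure $G_K(x,y)=H(x,y)\ln|T(x)-T(y)|+S_K$, Taylor-expand about $x_0$ in the mutual terms, and extract the self-drift from the $\nabla^\perp H$ piece via $\frac{1}{\gamma_i}\iint\omega_i\omega_i\ln|x-y|\,dx\,dy\approx \gamma_i\ln\ep$; the reduction to $\tilde P_i=DT(x_0)P_i$ using $S(Sw)^\perp=(\det S)\,w^\perp$ for symmetric $S$ is exactly how \eqref{ds} collapses. One small inaccuracy: the paper needs no ``near-symmetry'' of the deformed cores to dispatch the anisotropic self-contributions — the same $x\leftrightarrow y$ symmetrization you already invoke for the leading singularity bounds the residual $H(x,y)\big((DT(x)-DT(y))\frac{T(x)-T(y)}{|T(x)-T(y)|^2}\big)^\perp$ integrand uniformly, giving $O(\gamma_i)$ which vanishes after multiplying by $|\ln\ep|$; only the $O(\ep)$-concentration hypothesis and the rearrangement inequality are used. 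Your observation that $DT(x)K(x)DT(x)^\top=\tau(|x|^2)^2 I$, explaining the form of $G_K$, is correct and a pleasant addition, though the paper simply cites the decomposition from \cite{DLM}.
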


In order to maintain clarity, the statements of Proposition \ref{pro-ds} are presented in a formal yet non-rigorous manner. For further details, please refer to subsection \ref{sec2-2} below.

We emphasize that  Proposition \ref{pro-ds} is a formal result since we have assumed that the vortex helices ``remain concentrated" without a proof.  The primary objective of this paper is to rigorously establish sufficient concentration of these helical vortices for positive times, thereby demonstrating that the dominant component of their motion corresponds to the dynamical system (\ref{ds}).

Now we can state the first main results of the paper.
\begin{theorem}\label{thm-main}
	Assume that the initial data $\omega_\varepsilon(x,0)$ verifies conditions (\ref{1}), (\ref{2}), (\ref{3})  and (\ref{4}).  Denote by $(\tilde{P}_1(t),...,\tilde{P}_N(t))$, $t\in[0,T^*)$ the maximal solution to the   Cauchy problem  \eqref{ds} with  initial data $\tilde{P}_i^0=DT(x_0)P_i^0$.  Let ${P}_i(t)=DT(x_0)^{-1}\tilde{P}_i(t)$. Then for any fixed $\rho>0$ such that the closed disks $\overline{B(x_0+ P_i^0 |\ln \varepsilon|^{-1}, 2\rho |\ln \varepsilon|^{-1})}$ are mutually disjoint,  there exists $T_{\rho}\in (0,T^*)$ such that for any $\varepsilon$ small enough and $t\in[0,T_{\rho}]$ the following holds true.
	\begin{enumerate}
		\item The support of each component of the corresponding solution to \eqref{HEuler} with initial data $\omega_\varepsilon(x,0)$, as denoted in \eqref{5}, satisfies $$\Lambda_{i,\varepsilon}(t):=\mathrm{supp}\ \omega_{i,\varepsilon}(\cdot,t)\subseteq B(x_0+{P_i(t)}/{|\ln \varepsilon|},{\rho}/{|\ln \varepsilon|}),$$ and the disks $B(x_0+{P_i(t)}/{|\ln \varepsilon|},{2\rho}/{|\ln \varepsilon|})$ are mutually disjoint for $\ep$ sufficiently small.
		\item There exist $(P^{\varepsilon}_1(t),...,P^{\varepsilon}_N(t))$ and $\rho_{\varepsilon}>0$ such that
		\begin{equation*}
			\lim_{\varepsilon \xrightarrow{}0}|\ln \varepsilon|^{1+b}\int_{B\left(P^{\varepsilon}_i(t),\frac{\rho_{\varepsilon}}{|\ln \varepsilon|}\right)}\omega_{i,\varepsilon}(x,t)dx=a_i \quad\forall i=1,...,N,
		\end{equation*}
		with\begin{equation*}
			\lim_{\varepsilon \xrightarrow{}0}\rho_{\varepsilon}=0\quad and\quad\ \lim_{\varepsilon \xrightarrow{}0}|\ln \varepsilon|\left|P^{\varepsilon}_i(t)-\left(x_0+\frac{P_i(t)}{|\ln \varepsilon|}\right)\right|=0,\quad \forall i=1,...,N.
		\end{equation*}
	\end{enumerate}
\end{theorem}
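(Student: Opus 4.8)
The strategy is the Marchioro--Pulvirenti localization method for sharply concentrated vortices, adapted to the helical operator $\mathrm{div}(K\nabla\cdot)$; the feature specific to the present scaling is that one must use energy conservation, in addition to the usual Gronwall transport estimate, in order to obtain concentration sharp enough to recover the \emph{exact} dynamical system \eqref{ds}. I first record the quantities preserved by the flow: since $v=\nabla^\perp\Psi$ is divergence free, for each $i$ and all $t$ one has $\|\omega_{i,\varepsilon}(\cdot,t)\|_{L^1}=\gamma_i$, $\|\omega_{i,\varepsilon}(\cdot,t)\|_{L^\infty}=\|\omega_{i,\varepsilon}(\cdot,0)\|_{L^\infty}\le M\varepsilon^{-2}$, $|\mathrm{supp}\,\omega_{i,\varepsilon}(\cdot,t)|=|\Lambda_{i,\varepsilon}(0)|\le\pi\varepsilon^2$, and the helical kinetic energy $\tfrac12\iint G_K(x,z)\,\omega_\varepsilon(x,t)\omega_\varepsilon(z,t)\,dx\,dz$ is conserved, $G_K$ denoting the Green's function of $\mathrm{div}(K\nabla\cdot)$. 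In a fixed neighbourhood of $x_0$ one has the expansion $G_K(x,z)=\frac{1}{4\pi\sqrt{\det K(z)}}\ln\left((x-z)^{T}K(z)^{-1}(x-z)\right)+S(x,z)$, with $S$ bounded and $\nabla_xS(x,z)=O(1+\ln(1/|x-z|))$ near the diagonal (the logarithmic blow-up of $\nabla_xS$ being forced by $\nabla K\not\equiv0$). Setting $B_{i,\varepsilon}(t):=\gamma_i^{-1}\int x\,\omega_{i,\varepsilon}(x,t)\,dx$ and $P_{i,\varepsilon}(t):=|\ln\varepsilon|\,(B_{i,\varepsilon}(t)-x_0)$, multiplying the vorticity equation in \eqref{HEuler} by $x$ and integrating by parts gives $\dot B_{i,\varepsilon}=\gamma_i^{-1}\int v\,\omega_{i,\varepsilon}\,dx=\gamma_i^{-1}\sum_j\iint\nabla^\perp_xG_K(x,z)\,\omega_{i,\varepsilon}(x,t)\omega_{j,\varepsilon}(z,t)\,dx\,dz$.

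The argument runs by continuation on the maximal interval $[0,T_\varepsilon]$ along which $\Lambda_{i,\varepsilon}(t)\subseteq B(x_0+P_i(t)/|\ln\varepsilon|,\rho/|\ln\varepsilon|)$ and $|P_{i,\varepsilon}(t)-P_i(t)|\le\delta_0$ hold for all $i$, with $\delta_0>0$ small and fixed (and small enough, together with $T_\rho$, that the balls $B(x_0+P_i(t)/|\ln\varepsilon|,2\rho/|\ln\varepsilon|)$ stay disjoint); one must show $T_\varepsilon\ge T_\rho$ for all small $\varepsilon$. The heart of the matter --- and the main obstacle --- is the concentration estimate on $[0,T_\varepsilon]$, obtained by combining two mechanisms. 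A Marchioro--Pulvirenti iteration controls the mass of $\omega_{i,\varepsilon}(\cdot,t)$ outside $B(B_{i,\varepsilon}(t),R)$ through a Gronwall inequality whose right-hand side combines the velocities induced by the other cores (bounded and smooth, since the cores are $\gtrsim\rho/|\ln\varepsilon|$ apart) with the self-induced velocity, the latter split into its near-field part --- which is $\lesssim\gamma_i/R$ on the escaped mass --- and the part generated by the already-escaped mass, which is absorbed by Gronwall; this locates the bulk at $B_{i,\varepsilon}(t)$ and forbids splitting, but on its own it controls the core only up to a scale too coarse to fix the constant $B$. Energy conservation, together with the fact that $\iint\ln(1/|x-z|)\,\omega_{i,\varepsilon}\omega_{i,\varepsilon}$ is maximal --- under the constraint $\|\omega_{i,\varepsilon}\|_{L^\infty}\le M\varepsilon^{-2}$ --- for a disc of radius $\sim\varepsilon$, by the Riesz rearrangement inequality, forces $\iint\ln(1/|x-z|)\,\omega_{i,\varepsilon}(x,t)\omega_{i,\varepsilon}(z,t)\,dx\,dz=\gamma_i^2|\ln\varepsilon|\,(1+o(1))$, which confines the core to diameter $\varepsilon^{1+o(1)}$. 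Putting the two together: there is $\ell_\varepsilon$ with $|\ln\varepsilon|\,\ell_\varepsilon\to0$ such that $\int_{|x-B_{i,\varepsilon}(t)|>\ell_\varepsilon}\omega_{i,\varepsilon}(x,t)\,dx=o(\gamma_i)$ uniformly on $[0,T_\varepsilon]$, which already yields assertion (1) once $B_{i,\varepsilon}$ is shown to track $P_i(t)/|\ln\varepsilon|$. The difficulty is exactly this interplay: energy gives the sharp scale but no location, transport gives the location but, alone, a scale too coarse to pin the constant.

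With this sharp concentration I expand $\dot B_{i,\varepsilon}$. For $j\ne i$, $z\mapsto\nabla^\perp_xG_K(x,z)$ is smooth across $\Lambda_{i,\varepsilon}$, and Taylor expansion reduces the $j$-th term to $\gamma_j\nabla^\perp_xG_K(B_{i,\varepsilon},B_{j,\varepsilon})+o(\gamma_j|\ln\varepsilon|)$; inserting the local form of $G_K$, using $K(x_0)^{-1}=I_2+h^{-2}x_0x_0^{T}$ and $DT(x_0)=\tau(r_0^2)\,K(x_0)^{-1/2}$ from \eqref{DT}, the anisotropic point-vortex kernel $(K(x_0)^{-1}w)^\perp/(w^{T}K(x_0)^{-1}w)$ is transformed, under $w\mapsto DT(x_0)w$, into a constant multiple of the Euclidean kernel $(DT(x_0)w)^\perp/|DT(x_0)w|^2$; collecting the powers of $|\ln\varepsilon|$ via $\gamma_j|\ln\varepsilon|^2=a_j$ produces exactly the term $A\sum_{j\ne i}a_j(\tilde P_i-\tilde P_j)^\perp/|\tilde P_i-\tilde P_j|^2$ of \eqref{ds} after the change of frame $\tilde P=DT(x_0)P$. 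For $j=i$, the frozen-coefficient singular part of $\nabla^\perp_xG_K$ is odd in $x-z$, hence antisymmetrizes to zero in $\iint(\cdot)\,\omega_{i,\varepsilon}(x)\omega_{i,\varepsilon}(z)\,dx\,dz$, so the self-velocity is carried by $\nabla^\perp_xS$; its logarithmic part contributes $\gamma_i^{-1}\,c(x_0)\iint\ln(1/|x-z|)\,\omega_{i,\varepsilon}\omega_{i,\varepsilon}=c(x_0)\,\gamma_i|\ln\varepsilon|\,(1+o(1))$, where $c(x_0)$ is a fixed vector built from $\nabla K(x_0)$ which, by the reflection symmetry of $K$ about the $x_1$-axis, is parallel to $\binom{0}{1}$ --- this is the $-a_iB\binom{0}{1}$ term of \eqref{ds}, the value $\gamma_i^2|\ln\varepsilon|(1+o(1))$ of the self-potential forced in the previous step being exactly what pins the constant $B$. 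Thus $P_{i,\varepsilon}$ satisfies \eqref{ds} up to an $o(1)$ error, uniformly on $[0,T_\varepsilon]$.

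Finally, on $[0,T_\rho]$ the trajectories of \eqref{ds} remain uniformly separated, so its vector field is Lipschitz there; a Gronwall comparison between $P_{i,\varepsilon}$ and the genuine solution $P_i$ of \eqref{ds} gives $\sup_{t\in[0,T_\rho]}|P_{i,\varepsilon}(t)-P_i(t)|\to0$, which strictly improves the bound $|P_{i,\varepsilon}-P_i|\le\delta_0$ and, combined with the concentration estimate, strictly improves the inclusion $\Lambda_{i,\varepsilon}(t)\subseteq B(x_0+P_i(t)/|\ln\varepsilon|,\rho/|\ln\varepsilon|)$; by continuation, $T_\varepsilon\ge T_\rho$, which is assertion (1). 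Assertion (2) follows with $P^\varepsilon_i(t):=B_{i,\varepsilon}(t)$ and $\rho_\varepsilon:=|\ln\varepsilon|\,\ell_\varepsilon\to0$: the concentration estimate gives $|\ln\varepsilon|^2\int_{B(P^\varepsilon_i(t),\,\rho_\varepsilon/|\ln\varepsilon|)}\omega_{i,\varepsilon}\,dx=|\ln\varepsilon|^2\gamma_i(1-o(1))\to a_i$, while $|\ln\varepsilon|\,|P^\varepsilon_i(t)-(x_0+P_i(t)/|\ln\varepsilon|)|=|P_{i,\varepsilon}(t)-P_i(t)|\to0$.
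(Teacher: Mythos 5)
Your overall strategy --- Marchioro--Pulvirenti localization combined with energy conservation, followed by a comparison with the point-vortex dynamical system --- is exactly the framework the paper uses, and your identification of $DT(x_0)=\tau(r_0^2)K(x_0)^{-1/2}$ and of the anisotropic-to-Euclidean change of frame is correct. However there are two genuine gaps, one of them fatal.

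The fatal gap is in the localization step: you run the Marchioro--Pulvirenti iteration on the Euclidean balls $B(B_{i,\varepsilon}(t),R)$ and claim the near-field self-induced velocity on escaped mass is $\lesssim\gamma_i/R$, which is then ``absorbed by Gronwall''. For the 2D Euler equations this works because the radial (escape) component of the self-velocity vanishes at leading order --- both by oddness of the Biot--Savart kernel and by the centre-of-mass cancellation. For $\mathcal L_K=\mathrm{div}(K\nabla\cdot)$ with $r_0\neq0$ this cancellation fails with an isotropic cut-off. Concretely: the singular kernel is, to leading order, $\frac{(DT(x_0)(x-y))^\perp}{|DT(x_0)(x-y)|^2}$, and the escape component one must control is
\begin{equation*}
\frac{x-B_{i,\varepsilon}}{|x-B_{i,\varepsilon}|}\cdot\frac{(DT(x_0)(x-y))^\perp}{|DT(x_0)(x-y)|^2}.
\end{equation*}
With $x-B_{i,\varepsilon}=x'$, $\tilde x'=DT(x_0)x'$, the leading ($y$-independent) term is $\frac{x'\cdot\tilde x'^\perp}{|x'|\,|\tilde x'|^2}$, and since $DT(x_0)=\tau(r_0^2)\,\mathrm{diag}(p,q)$ with $p\neq q$ (for $r_0\neq0$), one has $x'\cdot\tilde x'^\perp=\tau(r_0^2)(p-q)x_1'x_2'\neq0$. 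So there is \emph{no} cancellation, and the self-velocity on escaped mass is genuinely $O(\gamma_i|\ln\varepsilon|/R)$ rather than $O(\gamma_i|\ln\varepsilon|^3 J_{i,\varepsilon}/R^3)$. After pairing with the cut-off derivative $\sim|\ln\varepsilon|/\zeta$ and the iterated shells $\zeta\sim R/n$, this produces a Gronwall constant of order $n\,a_i/R^2$, which exceeds the required $O(n/R+n/\rho^2)$ when $R$ is a small constant, and the factorial gain $(A_\varepsilon t)^{n+1}/(n+1)!$ no longer converges for a time $T$ independent of $\varepsilon$. The paper fixes this precisely by replacing the isotropic cut-off with $W_{R,\zeta}\bigl(DT(x_0)(x-B^{i,\varepsilon}(t))\bigr)$ and the isotropic distance with $|DT(x_0)(x-B^{i,\varepsilon}(t))|$, so that the escape direction becomes $\tilde x'/|\tilde x'|$ and the leading term is $\frac{\tilde x'\cdot\tilde x'^\perp}{|\tilde x'|^3}\equiv 0$; the paper explicitly flags this transformed cut-off as the essential new ingredient that distinguishes its argument from \cite{BCM3,DLM}. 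This is not an optional refinement: your proof as written cannot close.

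A secondary issue is your Green's function decomposition $G_K(x,z)=\frac{1}{4\pi\sqrt{\det K(z)}}\ln\bigl((x-z)^{T}K(z)^{-1}(x-z)\bigr)+S(x,z)$ with $\nabla_x S=O(1+\ln|x-z|^{-1})$. Because you freeze the amplitude at $z$, the remainder $S$ is not symmetric and its gradient blows up logarithmically; you then need to extract the precise coefficient $c(x_0)$ of that log in order to recover $B$, which you assert ``by reflection symmetry of $K$'' but do not verify --- that requires a second-order expansion of $\mathcal G_K$ near the diagonal, not merely a bound. The paper instead works with the refined decomposition of \cite{DLM}, $\mathcal G_K(x,y)=H(x,y)\ln|T(x)-T(y)|+S_K(x,y)$ with $H(x,y)=\frac{(\det K(x)\det K(y))^{-1/4}}{2\pi}$ \emph{symmetric} and $S_K\in W^{1,\infty}_{\mathrm{loc}}$ (bounded gradient). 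Then the odd part $H(x,y)(DT(x)\frac{T(x)-T(y)}{|T(x)-T(y)|^2})^\perp$ antisymmetrizes to $O(\gamma_i)$ in the self-interaction, $\nabla^\perp S_K$ is harmless, and the drift $B$ emerges cleanly and explicitly from $\nabla^\perp H(x_0,x_0)\cdot\bigl(-\gamma_i|\ln\varepsilon|\bigr)$ with $\nabla^\perp H(x_0,x_0)=\frac{(0,r_0)}{4\pi h\sqrt{h^2+r_0^2}}$. This is both cleaner and actually necessary for the quantitative error bounds in Section 4.
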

\begin{remark}
	As mentioned above, dynamics of multiple vortex helices was also considered in \cite{DLM}. However, we remark that their assumptions on initial data are quite different with ours. In particular, the  helical vortices in \cite{DLM}  are assumed to be initially separated by   distances with a positive lower bound independent  of  $\ep$, so that the  mutual interaction  can be neglected compared to their self-interaction and hence each vortex helix in the system dynamics following the same law as  those of isolated vortex helix. See Theorem 1 and Section 3 in \cite{DLM}. It can be seen  from \eqref{4} that in our settings for initial vorticity, the mutual distances tend to $0$  as $\ep\to0$, and the self-induced motion of the
	vortex helices is of the same order as the velocity induced by the interaction of vortex helices. This enables us to derive the new discovered dynamical system \eqref{ds} describing the evolution of multiple helical vortex filaments.
\end{remark}
\begin{remark}
    Since it was proved in \cite{DLM} that Green's function for $\mathcal L_K$  can be decomposed in a similar form as \eqref{decompG} in bounded domains  with $C^{1,1}$ boundary, our results in Theorem \ref{thm-main} can also be established in bounded domains without modifying the proof.
\end{remark}
\begin{remark}
	Our Theorem \ref{thm-main} concerns the evolution of helical vortices with general initial data. The time $T_\rho$ could be extremely small. We conjecture that by selecting some specific initial data, $T_\rho$ can be extended to arbitrary $T<T^*$ as the case for vortex rings in \cite{DDMW}.
\end{remark}

As in \cite{BCM3} for vortex rings, the proof of Theorem \ref{thm-main} relies on the conservation of energy, an iteration argument and a formal principle  ``The divergence of the fluid kinetic energy at a specific rate as
$\ep\to0$ suggests that the vorticity becomes concentrated within a vanishingly small region".  Nevertheless, in the cases of helical vortices, many new difficulties arise as the operator $\mathcal L_K:=\mathrm{div}(K(x)\nabla)$ is anisotropic, leading to the presence of a nonlinear transformation $T$ in the main order term of the corresponding Green's function (see \eqref{decompG} below). Consequently, several crucial cancellations that fundamentally rely on symmetry properties in the calculations of \cite{BCM3} (as exemplified by the proof of Proposition 4.2 in their framework) cease to hold in our generalized context.
The aforementioned challenges are resolved in our study by the introduction of novel transformed components: specifically, a coordinate-transformed cut-off function $W_{R,\zeta}\big(DT(x_0)(x-B^{i,\varepsilon}(t))\big)$ and   distance function $ |DT(x_0)(x(t)-B^{i,\varepsilon}(t)) |$. This fundamentally differs from conventional methods documented in existing works (e.g., \cite{BCM3, DLM}) that directly employ untransformed coordinates $W_{R,\zeta}(x-B^{i,\varepsilon}(t))$ and $ |x(t)-B^{i,\varepsilon}(t) |$.  We believe that the using of the transformation $DT(x_0)$ is necessary and essential since $\mathcal L_K$ is anisotropic.

\subsection{An example for leapfrogging helical vortices} The dynamical system \eqref{ds} has periodic solutions, see for example our later analysis in Section \ref{leapfrogging}.  In view of the numerical observation for leapfrogging helical vortices in e.g.  \cite{CL, DSR, HHSB}, a natural question arises regarding the applicability of Theorem \ref{thm-main} in establishing rigorous confirmation of this observed phenomenon. This requests us to enlarge the time $T_\rho$ to cover several periods.

However, the second-order elliptic operator $\mathcal L_K:=\mathrm{div}(K(x)\nabla)$ exhibits progressive degeneracy in neighborhoods of a given point $x_0=(r_0,0)$ as $r_0\to+\infty$. Therefore, it seems hard to apply directly the approach developed in Section 7 of  \cite{BCM3} by taking the main radii $r_0$ sufficiently large to extend the time $T_\rho$ in Theorem \ref{thm-main} to guarantee leapfrogging of two helical vortices. We observe that the time $T_\rho$ can still be extended by decreasing the initial distance of the two helical vortices. This strategy ultimately enables us to provide the first rigorous mathematical derivation of leapfrogging motion for helical vortices, which forms the second main results of this paper.
\begin{theorem}\label{thm-leapfrogging}
	Suppose that $N=2$ and  $a_1+a_2\not=0$. For any given integer $k>0$, there exists $\rho_0'$ sufficiently small such that if the initial distance $|P_1^0-P_2^0|\leq \rho_0'$, and $4\rho<\frac{|P_1^0-P_2^0|(h^2+h\sqrt{h^2+r_0^2})}{r_0^2+h^2+h\sqrt{h^2+r_0^2}}$, then the time $T_\rho$ in Theorem \ref{thm-main} can be extended to $T'_\rho\geq T_\rho$ so that the conclusions of Theorem  \ref{thm-main} still hold for any $t\in[0, T_\rho']$. Moreover, it holds that
	$$ T_\rho'>k T_E, $$
where $T_E$ stands for the period of motion of the solution for \eqref{ds} with initial data $\tilde{P}_i^0=DT(x_0)P_i^0$ (i=1,2), which is explicated later in Section \ref{leapfrogging}.
\end{theorem}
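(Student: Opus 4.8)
The plan is to combine a detailed analysis of the limiting system \eqref{ds} for $N=2$ with a revisit of the iterative localization scheme behind Theorem~\ref{thm-main}. This mirrors, in spirit, the strategy of Section~7 of \cite{BCM3}, except that here the small initial distance $|P_1^0-P_2^0|$ plays the role that the large radius $r_0$ plays there.

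I would first study the two-vortex system. Put $\mathbf d(t)=\tilde P_1(t)-\tilde P_2(t)$ and (this is where $a_1+a_2\neq0$ enters) the weighted centroid $\tilde Q(t)=\frac{a_1\tilde P_1(t)+a_2\tilde P_2(t)}{a_1+a_2}$. Adding and subtracting the two equations in \eqref{ds} shows that $\tilde Q$ moves with the constant velocity $-\frac{a_1^2+a_2^2}{a_1+a_2}B\,\mathbf e_2$, $\mathbf e_2=(0,1)$, while $\mathbf d$ solves the autonomous planar equation
\[
\partial_t\mathbf d=A(a_1+a_2)\,\frac{\mathbf d^\perp}{|\mathbf d|^2}-(a_1-a_2)B\,\mathbf e_2 ,
\]
which is Hamiltonian with conserved energy $H(\mathbf d)=-\tfrac12A(a_1+a_2)\ln|\mathbf d|^2+(a_1-a_2)B\,d_1$ (here $\mathbf d=(d_1,d_2)$). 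Since $A>0$ by \eqref{def ab}, when $|\mathbf d(0)|$ is small the logarithmic term dominates, so the connected level set of $H$ through $\mathbf d(0)$ is a closed curve contained in the thin annulus $\{(1-C_0|\mathbf d(0)|)|\mathbf d(0)|\le|\mathbf d|\le(1+C_0|\mathbf d(0)|)|\mathbf d(0)|\}$ with $C_0=C_0(a_1,a_2,h,r_0)$; in polar coordinates $\dot\phi=A(a_1+a_2)|\mathbf d|^{-2}+O(|\mathbf d|^{-1})$ keeps a fixed sign, so $\mathbf d$ runs around this curve periodically with period $T_E=\frac{2\pi|\mathbf d(0)|^2}{|A(a_1+a_2)|}(1+O(|\mathbf d(0)|))$. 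In particular $T^*=+\infty$ for these data and $T_E\to0$ as the initial distance tends to $0$. Since $\tilde P_1=\tilde Q+\tfrac{a_2}{a_1+a_2}\mathbf d$ and $\tilde P_2=\tilde Q-\tfrac{a_1}{a_1+a_2}\mathbf d$, the trajectories $P_i=DT(x_0)^{-1}\tilde P_i$ remain in a bounded set, and using the explicit diagonal form of $DT(x_0)$ from \eqref{DT} I would obtain, for all $t\ge0$,
\[
|P_1(t)-P_2(t)|\ \ge\ \bigl(1-C_0|\mathbf d(0)|\bigr)\,\frac{|P_1^0-P_2^0|\,(h^2+h\sqrt{h^2+r_0^2})}{r_0^2+h^2+h\sqrt{h^2+r_0^2}}\ >\ 4\rho
\]
once $\rho_0'$ (hence $|\mathbf d(0)|$) is small enough, by the hypothesis relating $\rho$ and $|P_1^0-P_2^0|$. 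Hence the disks $B\bigl(x_0+P_i(t)/|\ln\varepsilon|,\,2\rho/|\ln\varepsilon|\bigr)$ stay mutually disjoint for every $t\ge0$, so the part of the definition of $T_\rho$ in Theorem~\ref{thm-main} coming from the ODE trajectories never triggers.

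Next I would revisit the proof of Theorem~\ref{thm-main}, where $T_\rho$ is the smaller of (i) the first time the inflated ODE trajectory disks fail to be disjoint or leave the region in which the Green's function expansion \eqref{decompG} is valid, and (ii) the first time the iterated energy/concentration estimate --- built on conservation of the kinetic energy and on the transformed cut-off $W_{R,\zeta}\bigl(DT(x_0)(x-B^{i,\varepsilon}(t))\bigr)$ and transformed distance $|DT(x_0)(x(t)-B^{i,\varepsilon}(t))|$ --- first degrades below the required threshold. By the previous step (i) never occurs, so it suffices to rerun the iteration on the fixed interval $[0,(k+1)T_E]$. The accumulated errors in the vortex centers and in the transformed second moments $\int|DT(x_0)(x-B^{i,\varepsilon})|^2\omega_{i,\varepsilon}\,dx$ satisfy Gr\"onwall-type inequalities whose amplification rate is controlled by the gradient of the velocity induced by the other vortex, of order $|\mathbf d(t)|^{-2}\asymp|P_1^0-P_2^0|^{-2}\asymp T_E^{-1}$, and whose forcing is $\varepsilon$-small (times fixed powers of $|\ln\varepsilon|$ and of $|P_1^0-P_2^0|^{-1}$). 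Because the interval length is $\asymp T_E\asymp|P_1^0-P_2^0|^2$, the total amplification over $[0,(k+1)T_E]$ is only $e^{O(k)}$, and since the second moments start at $O(\varepsilon^2)$ the estimate closes provided $\varepsilon$ is small depending on $k$ and on the initial data; one also checks that the linear drift of $\tilde Q$ and the anisotropy of $\mathcal L_K$, both felt only over the fixed time $(k+1)T_E$ and over the shrinking spatial scale $|P_1^0-P_2^0|/|\ln\varepsilon|$, do not disturb these bounds. Then the conclusions of Theorem~\ref{thm-main} hold on $[0,(k+1)T_E]$, and $T_\rho':=\max\{T_\rho,(k+1)T_E\}$ satisfies $T_\rho'\ge T_\rho$ and $T_\rho'>kT_E$.

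The main obstacle is this last step: one must reopen the whole iterative scheme of Theorem~\ref{thm-main} and make every constant explicit in its dependence on the small parameter $|P_1^0-P_2^0|$, controlling in particular the number of iteration steps and the per-step loss needed to reach $(k+1)T_E$, so that the evolution genuinely survives the $k$ leapfrog periods, and checking that the $DT(x_0)$-based cancellations that replace the symmetry cancellations of \cite{BCM3} persist over this longer evolution. The most delicate quantitative points are the uniform-in-time control of the transformed second moments through the conserved kinetic energy --- the ``divergence of the kinetic energy forces concentration'' principle applied repeatedly, now with the nonlinear transformation $T$ present in the leading part of the Green's function --- and verifying that the errors from the off-diagonal part of the Green's function and from Taylor expanding the velocity of vortex $j$ at the center of vortex $i$ remain summable over all the steps once $|P_1^0-P_2^0|\le\rho_0'$.
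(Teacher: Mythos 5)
Your high-level strategy does align with the paper: you change variables to $\mathbf{d}=\tilde P_1-\tilde P_2$ and the weighted centroid, identify the autonomous Hamiltonian flow for $\mathbf{d}$, note that the period $T_E$ scales like $|\mathbf{d}(0)|^2$ and vanishes with the initial distance, and use the explicit diagonal form of $DT(x_0)$ with $c_0=\tau(r_0^2)$, $C_0=\tau(r_0^2)\bigl(r_0^2+h^2+h\sqrt{h^2+r_0^2}\bigr)/(h^2+h\sqrt{h^2+r_0^2})$ to justify the separation hypothesis $4\rho<c_0|P_1^0-P_2^0|/C_0$. These match Section~5 of the paper (Lemma~\ref{35} and the phase-plane analysis).

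The gap is in the quantitative heart of the localization step, and it matters. You attribute the critical Gr\"onwall inequality to the ``vortex centers and transformed second moments'' with ``$\varepsilon$-small forcing,'' and conclude that the amplification $e^{O(k)}$ applied to an $O(\varepsilon^2)$ initial quantity closes for $\varepsilon$ small. But the second moment $J_{i,\varepsilon}(t)$ is controlled \emph{uniformly} in $t\in[0,T_\varepsilon]$ by energy conservation (Lemma~\ref{thm-moment}) and needs no Gr\"onwall; the quantity that actually needs to be propagated is the (transformed) support radius $\tilde R_t$, governed by the modified Lemma~\ref{25} inequality $\dot{\tilde R}_t\le \tilde C'\bigl(1+\tilde R_t/\rho^2+\ldots\bigr)$ whose forcing is $O(1)$, not $\varepsilon$-small. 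Integrating this gives $\tilde R_t\lesssim\rho^2\bigl(e^{\tilde C' t/\rho^2}-1\bigr)$, so requiring $\tilde R_t\le c_0\rho/2$ on $[0,(k+1)T_E]$ with $(k+1)T_E\sim (k+1)\rho^2/\beta^2$ forces $\rho\,e^{C(k+1)/\beta^2}\lesssim 1$, i.e.\ the initial distance (not $\varepsilon$) must be exponentially small in $k$. Moreover, a single Gr\"onwall from $t=0$ is not directly available: the term $\varepsilon^{-1}\sqrt{m^i_t(\tilde R_t/2c_0)}$ in \eqref{22} is only controllable via Proposition~\ref{23} when $\tilde R_t$ is already comparable to the threshold radius, which is precisely why the paper runs an iteration over $j_n\sim n/6$ shells of radius $3j\rho_n$ with $\rho_n=\rho/n$, each contributing $T'_j\sim\rho^2/j$, so that $\sum_j T'_j\sim\rho^2\ln n>(k+1)T_E$ once $n\gtrsim e^{(k+1)/\beta^2}$. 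Your proposal names this iteration as ``the main obstacle'' but does not supply the step-count/per-step bookkeeping, and the Gr\"onwall heuristic you do give is applied to the wrong quantity and would not, as stated, yield the needed $\rho^2\ln n$ total time or the correct smallness condition on $|P_1^0-P_2^0|$.
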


\begin{remark}
	 Similar to the case of vortex rings considered in \cite{DDMW, DHLM}, leapfrogging phenomena can also occur when each vortex helix has identical constant vorticity mass and mutual separation on the order $1/\sqrt{|\ln\ep|}$. We will investigate this case using the approaches developed in this paper and \cite{DHLM} in our forthcoming work \cite{CFQW}.  
\end{remark}

This paper is organized as follows: In Section 2, we present a useful decomposition for Green's function corresponding to $\mathcal L_K$ and a formal derivation of the dynamical system \eqref{ds}. Section 3 is devoted to the estimates for the localization of vorticity by the conversation of energy and an iteration argument. We determine the location of supports of vortices and derive rigorously the dynamical system \eqref{ds} in Section 4, which also proves the main results Theorem \ref{thm-main}. In the final section, we discuss the special cases of two vortex helices and show that  leapfrogging occurs  provided that the initial distances are sufficiently small via an iterative argument.

\section{Preliminaries and formal derivation of the dynamical system}
\label{sec:preliminaries}

In this section we collect some useful tools such as the decomposition of  Green's function and give a formal  derivation of the dynamical system \eqref{ds} for $N$-interacting helical vortices as $\ep\to 0$.

\subsection{The decomposition of Green's function}
The existence of Green's function $\mathcal G_K$ for the operator $\mathcal{L}_K=\mathrm{div}(K(x)\nabla)$  with $K$ defined by \eqref{def K} in $\mathbb R^2$ has been established in \cite{CFLQ, CLQW} with an explicit formula given by Fourier series in \cite{CFLQ}. A key ingredient throughout our proof is  the following refined decomposition of Green's function $\mathcal G_K$ obtained in \cite{DLM} for the cases of bounded domains.

In what follows, we shall denote $H(x,y)=\frac{(\det K(x) \det K(y))^{-\frac{1}{4}}}{2\pi}$ and $|X|:=\sqrt{|x|^2+h^2}$.
\begin{proposition}\label{decompG}
	The  Green's  function $\mathcal G_K$ can be decomposed as
	$$\mathcal G_K(x,y)=G_K(x,y)+S_K(x,y),$$
where for $x\not= y\in \mathbb R^2$ the singular part
	\begin{equation}\label{GK}
		G_K(x,y)=H(x,y)\ln  |T (x)-T (y)|,
	\end{equation}
and the relative smooth part $S_K\in W^{1,\infty}_{loc}(\mathbb R^2\times \mathbb R^2)$.
\end{proposition}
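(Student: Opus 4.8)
The plan is to exploit that the map $T$ is engineered precisely to conjugate the anisotropic operator $\mathcal L_K=\mathrm{div}(K\nabla)$ into a conformal (scalar-coefficient) operator, so that $G_K$ becomes the natural parametrix, and then to read off the regularity of the remainder from interior elliptic estimates. First I would check that $T$ is a global $C^\infty$-diffeomorphism of $\mathbb R^2$: its radial profile $\phi(\rho):=\tau(\rho^2)\rho$ satisfies $\phi(0)=0$, $\phi'(\rho)=\tau(\rho^2)\big(1+2\rho^2 g(\rho^2)\big)>0$, and $\phi(\rho)\to+\infty$ since $\tau(s)\to\infty$ (from $g(s)\sim\tfrac1{2h\sqrt s}$ at infinity); moreover $\det DT(x)=\tau(|x|^2)^2\,|X|/h>0$ by \eqref{DT}. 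The computational heart is the identity
\[
DT(x)\,K(x)\,DT(x)^{T}=\tau(|x|^2)^{2}\,I_2,\qquad\text{equivalently}\qquad \frac{DT(x)\,K(x)\,DT(x)^{T}}{\det DT(x)}=\sqrt{\det K(x)}\;I_2=\frac{h}{|X|}\,I_2,
\]
which I would verify either by direct substitution of \eqref{DT} and \eqref{def K}, or --- more transparently --- by diagonalizing in the polar frame, where $K(x)=\mathrm{diag}\big(h^2/|X|^2,\,1\big)$ and $DT(x)=\mathrm{diag}\big(\phi'(\rho),\,\phi(\rho)/\rho\big)$ in the radial and angular directions; the ODE $\phi'/\phi=\sqrt{\rho^2+h^2}/(h\rho)$ built into $\tau$ is exactly what makes $DT\,K\,DT^{T}$ a scalar matrix.

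By the standard change of variables for divergence-form operators, this identity yields the exact conjugation $\mathcal L_K v\,(x)=\det DT(x)\,\mathrm{div}_z\!\big(m\,\nabla_z(v\circ T^{-1})\big)(T(x))$, where $m(z):=\sqrt{\det K}\,\big|_{x=T^{-1}(z)}$ is smooth and positive; since $\delta$-masses transform by the inverse Jacobian, a fundamental solution of $\mathrm{div}_z(m\nabla_z)$ at $w=T(y)$ pulls back to a parametrix for $\mathcal L_K$ at $y$. Here the symmetric normalization in $H$ is decisive: since $\tfrac1{2\pi\sqrt{m(z)m(w)}}=H(x,y)$, a short computation gives, for $P(z,w):=\tfrac1{2\pi\sqrt{m(z)m(w)}}\ln|z-w|$,
\[
\mathrm{div}_z\big(m\,\nabla_z P(\cdot,w)\big)=\delta_w-\frac{m(w)^{-1/2}}{2\pi}\,\big(\Delta_z m^{1/2}\big)(z)\,\ln|z-w|,
\]
the point being that the expected $O(|z-w|^{-1})$ error cancels: writing $m\,\nabla_z P(\cdot,w)=\tfrac{m(w)^{-1/2}}{2\pi}\big(m^{1/2}\nabla_z\ln|z-w|-\ln|z-w|\,\nabla_z m^{1/2}\big)$, the two cross terms $\pm\nabla_z m^{1/2}\!\cdot\!\nabla_z\ln|z-w|$ annihilate in the divergence. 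Pulling back through $T$ (bi-Lipschitz on compacts, so $|T(x)-T(y)|\asymp|x-y|$ locally) gives $\mathcal L_K\big(G_K(\cdot,y)\big)=\delta_y+F(\cdot,y)$ with $F$ continuous off the diagonal and $|F(x,y)|\le C\big(1+|\ln|x-y||\big)$ on compacts; hence $F(\cdot,y)\in L^p_{\mathrm{loc}}$ for every $p<\infty$, locally uniformly in $y$.

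The rest is elliptic regularity. Since the Green's function $\mathcal G_K$ of \cite{CFLQ, CLQW} satisfies $\mathcal L_K\big(\mathcal G_K(\cdot,y)\big)=\delta_y$, the function $S_K(\cdot,y):=\mathcal G_K(\cdot,y)-G_K(\cdot,y)$ solves $\mathcal L_K S_K(\cdot,y)=-F(\cdot,y)\in L^p_{\mathrm{loc}}$ for all $p<\infty$, with no $\delta$ left. Interior $W^{2,p}$-estimates for the uniformly elliptic, $C^\infty$-coefficient operator $\mathcal L_K$ give $S_K(\cdot,y)\in W^{2,p}_{\mathrm{loc}}\hookrightarrow C^{1,\alpha}_{\mathrm{loc}}$ ($p>2$), with bounds locally uniform in $y$; since $F(\cdot,y)$ and $S_K(\cdot,y)$ vary continuously with $y$ in $L^p_{\mathrm{loc}}$ (from off-diagonal smoothness of $\mathcal G_K,G_K$ and uniform integrability of the logarithmic singularity near the diagonal), $y\mapsto S_K(\cdot,y)$ is continuous into $C^{1,\alpha}_{\mathrm{loc}}$, so $S_K$ and $\nabla_x S_K$ are jointly continuous, hence locally bounded. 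Finally $\mathcal L_K$ is formally self-adjoint so $\mathcal G_K$ is symmetric, and $G_K(x,y)=H(x,y)\ln|T(x)-T(y)|$ is manifestly symmetric; thus $S_K(x,y)=S_K(y,x)$ and $\nabla_y S_K$ inherits the same joint local boundedness. Therefore $S_K\in W^{1,\infty}_{\mathrm{loc}}(\mathbb R^2\times\mathbb R^2)$.

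The real obstacle --- the reason this is not routine --- is obtaining $S_K\in W^{1,\infty}_{\mathrm{loc}}$ and not merely $C^{0,\alpha}_{\mathrm{loc}}$: a generic parametrix for a planar divergence-form operator removes the singularity only up to an $O(|x-y|^{-1})$ error, which elliptic regularity turns into a remainder that is only H\"older; the gain of one derivative rests entirely on the \emph{symmetric} prefactor $H=\tfrac1{2\pi}(\det K(x)\det K(y))^{-1/4}$, which forces the cancellation above and shrinks the error to $O(|\ln|x-y||)\in\bigcap_{p<\infty}L^p_{\mathrm{loc}}$. A secondary point --- upgrading fixed-$y$ regularity to joint regularity in $(x,y)$ --- is handled by the continuity bookkeeping and symmetry above. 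As a shortcut one may instead note that for any ball $B$, \cite{DLM} provides the bounded-domain decomposition $\mathcal G_{K,B}=G_K+S_{K,B}$ with the \emph{same} singular part $G_K$ and $S_{K,B}\in W^{1,\infty}_{\mathrm{loc}}(B\times B)$, while $\mathcal G_K-\mathcal G_{K,B}$ is $\mathcal L_K$-harmonic in each variable on $B$, hence $C^\infty(B\times B)$; adding these and letting $B\uparrow\mathbb R^2$ gives the decomposition at once.
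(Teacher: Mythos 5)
The paper never proves Proposition~\ref{decompG}: it simply states it and cites \cite{DLM}, which established the analogous decomposition for the Dirichlet Green's function on bounded $C^{1,1}$ domains, together with \cite{CFLQ, CLQW} for the existence of $\mathcal G_K$ on $\mathbb R^2$. Your write-up therefore does not ``take the same approach as the paper'' --- it supplies a proof where the paper supplies none. That said, the proof you give is essentially correct, and the key identity you isolate is the right one. In the polar frame $K$ diagonalizes to $\mathrm{diag}(h^2/|X|^2,\,1)$ and $DT$ to $\mathrm{diag}(\phi'(\rho),\,\phi(\rho)/\rho)$ with $\phi(\rho)=\tau(\rho^2)\rho$; since $1+2\rho^2g(\rho^2)=\sqrt{\rho^2+h^2}/h$, one indeed gets $DT\,K\,DT^{T}=\tau(|x|^2)^2 I_2$, so $T$ conjugates $\mathcal L_K$ to the isotropic operator $\mathrm{div}(m\nabla)$ with $m=\sqrt{\det K}\circ T^{-1}$. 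Your cancellation computation for the parametrix $P(z,w)=\tfrac1{2\pi\sqrt{m(z)m(w)}}\ln|z-w|$ is also correct: the symmetric square-root prefactor kills the $O(|z-w|^{-1})$ cross terms, leaving a remainder of size $O(|\ln|x-y||)\in L^p_{\mathrm{loc}}$, and interior $W^{2,p}$ estimates then give $S_K(\cdot,y)\in C^{1,\alpha}_{\mathrm{loc}}$. This is exactly what upgrades the remainder from the generic H\"older class to $W^{1,\infty}_{\mathrm{loc}}$.

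The one place where you are a bit loose is the passage from fixed-$y$ regularity to joint $W^{1,\infty}_{\mathrm{loc}}(\mathbb R^2\times\mathbb R^2)$: showing that $y\mapsto S_K(\cdot,y)$ is continuous into $L^p_{\mathrm{loc}}$ (to feed the a priori estimate) requires some uniform control near the diagonal that you describe but do not really establish. However, your closing shortcut avoids this entirely and is in fact the cleanest route, and very likely what the authors had in mind: take the bounded-domain decomposition $\mathcal G_{K,B}=G_K+S_{K,B}$ of \cite{DLM} with the same singular part $G_K$, observe that $\mathcal G_K-\mathcal G_{K,B}$ is $\mathcal L_K$-harmonic in each variable on $B$ (hence jointly $C^\infty(B\times B)$ by ellipticity of $\mathcal L_{K,x}+\mathcal L_{K,y}$), and conclude by exhausting $\mathbb R^2$ by balls. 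If you intend this as a self-contained proof I would lead with that shortcut, since it closes the whole-plane gap that the paper's citation leaves implicit, and keep the conjugation/parametrix computation as an illuminating explanation of \emph{why} the singular part has exactly the form $H(x,y)\ln|T(x)-T(y)|$.
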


\subsection{Formal derivation of the dynamical system}\label{sec2-2}

In this section, we will   derive the dynamical system \eqref{ds} in a formal way by assuming that vorticity remains concentrated as time evolves. More precisely, we will assume that the initial condition $\omega_\varepsilon(x,0)$ verifies   (\ref{1}), (\ref{2}), (\ref{3}) and (\ref{4}). By the transport nature of \eqref{HEuler}, one has
\begin{equation*}
	|\omega_{i,\varepsilon}(x,t)|\leq \frac{M}{\ep^2}, \ \  \  \int   \omega_{i,\varepsilon}(x,t)dx=\gamma_i=\frac{a_i}{|\ln\ep|^{1+b}}.
\end{equation*}

Denote the center of mass for each component
\begin{equation}\label{cm}
	x_{i,\varepsilon}(t)=\frac{1}{\gamma_i}\int_{\mathrm{supp}\	\omega_{i,\varepsilon}(\cdot,t)} x\omega_{i,\varepsilon}(x,t)dx.
\end{equation}

Note the total mass of these vortex helices is of order $1/|\ln\ep|^{1+b}$. According to the analysis in \cite{DLM}, if these $N$ helical vortex tubes are regarded as a whole, they will have a rotational motion around the origin of order $1/|\ln\ep|^{b}$.   Therefore, within a finite time of order $1/|\ln\varepsilon|^{1-b}$, the vortex helices can only move a distance of  $|x_{i, \ep}-x_0|=O(1/|\ln\ep|)$ from the starting point. Thus, it is reasonable to assume that the center can be written in the form  $x_{i,\varepsilon}(t)=x_0+P_{i,\varepsilon}(t)/|\ln \varepsilon|$ for some point $P_{i, \ep}$.

Now suppose that for $0<t<T$, there exist points $P_{i,\varepsilon}(t)$ and some large constant $C>0$ with
\begin{equation}\label{assume supp} \begin{split}
 x_{i,\varepsilon}(t)=x_0+\frac{P_{i,\varepsilon}(t)}{|\ln \varepsilon|},\ \   |x_{i,\varepsilon}(t)|\leq C,\   \ \mathrm{supp}\	 \omega_{i,\varepsilon}(\cdot,t)\subset B(x_{i,\varepsilon}(t), C \varepsilon), \   i=1,\ldots, N, \\
P_{i,\varepsilon}(0)=P^0_i\qquad \text{and}\ \  \quad|P_{i,\varepsilon}(t)-P_{j,\varepsilon}(t)|\geq C^{-1}>0, \ \  \forall\ i\not=j.
\end{split}\end{equation}

In what follows, we simply denote $\nabla=\nabla_x$ as the derivative with respect to variable $x$. 
\begin{proof}[Proof of Proposition \ref{pro-ds}]
By the definition of $x_{i,\varepsilon}(t)$, we have
\begin{equation}\label{dcm}
	\dot{x}_{i,\varepsilon}(t)=\frac{1}{|\ln\varepsilon|^{1-b}\gamma_i}\iint\omega_{i,\varepsilon}(x,t)\sum^N_{j=1}\omega_{j,\varepsilon}(y,t)\nabla^\perp \mathcal{G}_K (x,y)dxdy.
\end{equation}

It follows from  Proposition \ref{decompG} and straightforward calculations that
\begin{equation}\label{decom DG}
	\nabla^\perp \mathcal{G}_K(x,y) =\nabla^\perp H(x,y)\ln|T(x)-T(y)|+H(x,y)\left(DT(x)\frac{T(x)-T(y)}{|T(x)-T(y)|^2}\right)^\perp+\nabla^\perp S_{K}(x,y).
\end{equation}
Since $S_K\in W^{1,\infty}_{loc}(\mathbb R^2\times \mathbb R^2)$, we infer from assumption \eqref{assume supp}  that
\begin{equation}\label{2-8}
	 \left|\frac{1}{\gamma_i}\iint\omega_{i,\varepsilon}(x,t)\sum^N_{j=1}\omega_{j,\varepsilon}(y,t)\nabla^\perp S_K (x,y)dxdy \right|=O\left(\frac{1}{|\ln \varepsilon|^{1+b}}\right).
\end{equation}
Using the Taylor's expansion, for $x\in \mathrm{supp}\ \omega_{i,\varepsilon}(\cdot,t)$ and $y\in \mathrm{supp}\ \omega_{j,\varepsilon}(\cdot,t)$, we find
\begin{equation}\label{Taylor T}
	T(x)-T(y)=\frac{DT(x_0)(P_{i,\varepsilon}-P_{j,\varepsilon})}{|\ln\ep|}+O\left(\frac{1}{|\ln \varepsilon|^2}\right),\ \  H(x,y)=H(x_0, x_0)+O\left(\frac{1}{|\ln \varepsilon|}\right).
\end{equation}
Then, we compute and get
\begin{align}\label{2-9}
	&\quad  \frac{1}{\gamma_i}\sum_{j\neq i}\iint\omega_{i,\varepsilon}(x,t)\omega_{j,\varepsilon}(y,t)(\nabla^\perp \mathcal{G}_K (x,y)-\nabla^\perp S_K (x,y))dxdy  \\
	&=\sum_{j\neq i}\gamma_j\nabla^\perp H(x_0,x_0)\ln \left|DT(x_0)\frac{P_{i,\varepsilon}-P_{j,\varepsilon}}{|\ln \varepsilon|}\right| \nonumber \\
	&\quad +\sum_{j\neq i}\gamma_j H(x_0,x_0)|\ln \varepsilon|\frac{(DT(x_0)^2(P_{i,\varepsilon}-P_{j,\varepsilon}))^\perp}{|DT(x_0)(P_{i,\varepsilon}-P_{j,\varepsilon})|^2} \nonumber\\
	&\quad +O\left(\frac{1}{|\ln \varepsilon|^{1+b}}\right)+O\left(\frac{\ln |\ln \varepsilon|}{|\ln \varepsilon|^{1+b}}\right)  \nonumber\\
	&=\sum_{j\neq i}\gamma_j H(x_0,x_0)|\ln \varepsilon|\frac{(DT(x_0)^2(P_{i,\varepsilon}-P_{j,\varepsilon}))^\perp}{|DT(x_0)(P_{i,\varepsilon}-P_{j,\varepsilon})|^2} +O\left(\frac{\ln |\ln \varepsilon|}{|\ln \varepsilon|^{1+b}}\right).\nonumber
\end{align}
It remains to calculate $	\frac{1}{\gamma_i}\iint\omega_i(x,t)\omega_i(y,t) (\nabla^\perp \mathcal{G}_K (x,y)-\nabla^\perp S_K (x,y)) dxdy$.  Indeed, by symmetry properties, we obtain
\begin{align}\label{2-10}
	&\quad  \left| 	\frac{1}{\gamma_i}\iint\omega_{i,\varepsilon}(x,t)\omega_{i,\varepsilon}(y,t) H(x,y)\left(DT(x)\frac{T(x)-T(y)}{|T(x)-T(y)|^2}\right)^\perp dxdy  \right|\\
	&=\left| 	\frac{1}{2\gamma_i}\iint\omega_{i,\varepsilon}(x,t)\omega_{i,\varepsilon}(y,t) H(x,y)\left(( DT(x)-DT(y)) \frac{T(x)-T(y)}{|T(x)-T(y)|^2}\right)^\perp dxdy\right|\nonumber \\
	&\leq 	\frac{C}{ \gamma_i}\iint\omega_{i,\varepsilon}(x,t)\omega_{i,\varepsilon}(y,t) H(x,y) dx dy= O\left(\frac{1}{|\ln \varepsilon|^{1+b}}\right).\nonumber
\end{align}
By the smoothness of $T(x)$, it can be verified that $$\ln \frac{|T(x)-T(y)|}{|x-y|}\leq C, \ \  \forall\ x, y\in  \mathrm{supp}\ \omega_{i,\varepsilon}(\cdot,t).$$ Therefore, we get
\begin{align}\label{2-11}
		&\quad  \frac{1}{\gamma_i}\iint\omega_{i,\varepsilon}(x,t)\omega_{i,\varepsilon}(y,t)\nabla^\perp H(x,y)\ln|T(x)-T(y)|dxdy\\
		&=\frac{1 }{\gamma_i}\iint\omega_{i,\varepsilon}(x,t)\omega_{i,\varepsilon}(y,t)\nabla^\perp H(x,y)\ln|x-y|dxdy +O\left(\frac{1}{|\ln \varepsilon|^{1+b}}\right)\nonumber\\
		&=\frac{\nabla^\perp H(x_0,x_0)+O(1/|\ln\ep|) }{\gamma_i}\iint\omega_{i,\varepsilon}(x,t)\omega_{i,\varepsilon}(y,t)\ln|x-y|dxdy +O\left(\frac{1}{|\ln \varepsilon|^{1+b}}\right).\nonumber
\end{align}
We need to compute $\frac{1 }{\gamma_i}\iint\omega_{i,\varepsilon}(x,t)\omega_{i,\varepsilon}(y,t)\ln|x-y|dxdy$. On the one hand, for any $x, y \in \mathrm{supp}\ \omega_{i,\varepsilon}(\cdot,t)$, we infer from assumption \eqref{assume supp} that $|x-y|\leq 2C\ep$. Thus, we obtain
\begin{equation}\label{upbd}
	\frac{1 }{\gamma_i}\iint\omega_{i,\varepsilon}(x,t)\omega_{i,\varepsilon}(y,t)\ln|x-y|dxdy\leq -\gamma_i |\ln\ep|+O(|\gamma_i|).
\end{equation}
On the other hand, setting $r_\ep:=\ep \sqrt{\frac{\gamma_i}{\pi M}}$, we then get by using the rearrangement inequality
 $$\frac{1 }{\gamma_i}\iint\omega_{i,\varepsilon}(x,t)\omega_{i,\varepsilon}(y,t)\ln|x-y|dxdy\geq  \frac{2\pi M }{\ep^2}  \int_{0}^{r_\ep} s \ln s\  ds=\gamma_i \ln\ep+O\left(\frac{\ln |\ln \varepsilon|}{|\ln \varepsilon|^{1+b}}\right).$$
Hence we conclude that
\begin{equation}\label{2-12}
	\frac{1 }{\gamma_i}\iint\omega_{i,\varepsilon}(x,t)\omega_{i,\varepsilon}(y,t)\ln|x-y|dxdy= - \gamma_i |\ln\ep|+O\left(\frac{\ln |\ln \varepsilon|}{|\ln \varepsilon|^{1+b}}\right).
\end{equation}

A combination of identities \eqref{dcm}--\eqref{2-12}  yields
\begin{equation*}
	\begin{split}
		 \frac{\dot P_{i,\varepsilon}(t)}{|\ln \varepsilon|}&=\frac{1}{|\ln \varepsilon|}\sum_{j\neq i}a_j H(x_0,x_0)\frac{(DT(x_0)^2(P_{i,\varepsilon}-P_{j,\varepsilon}))^\perp}{|DT(x_0)(P_{i,\varepsilon}-P_{j,\varepsilon})|^2}\\
		&-\frac{1}{|\ln \varepsilon|}a_i\nabla^\perp H(x_0,x_0)+O\left(\frac{\ln |\ln \varepsilon|}{|\ln \varepsilon|^2}\right).
	\end{split}
\end{equation*}
Multiplying the above equation by $|\ln\ep|$ and letting $\varepsilon\xrightarrow{}0$, at least in a formal way, we get the desired dynamical system as follows:
\begin{equation}
	{\dot P_i(t)}=\sum_{j\neq i}a_j H(x_0,x_0)\frac{(DT(x_0)^2(P_i-P_j))^\perp}{|DT(x_0)(P_i-P_j)|^2}
	-a_i\nabla^\perp H(x_0,x_0).
\end{equation}
Recalling that $x_0=(r_0,0)$, by definitions, we have
\begin{equation*}
	\begin{split}
		&  H(x_0,x_0)=\frac{\sqrt{h^2+r_0^2}}{2\pi h}, \ \  \  \nabla^\perp H(x_0,x_0)= \frac{(0,r_0)}{4\pi h\sqrt{h^2+r_0^2}},\\
		&DT(x_0)=\tau(r_0^2)\left(I_2+\frac{1}{h^2+h\sqrt{h^2+r_0^2}}\begin{pmatrix}
			r_0^2&0\\
			0&0
		\end{pmatrix}\right).
	\end{split}
\end{equation*}
If we denote $\tilde{P}_i(t)=DT(x_0)P_i(t)$, then we obtain for $i=1,...,N,$
\begin{equation*}
	\left\{
	\begin{array}{ll}
		\partial_t \tilde{P}_i=A\sum_{j\neq i}a_j\frac{(\tilde{P}_i-\tilde{P}_j)^\perp}{|\tilde{P}_i-\tilde{P}_j|^2}-a_iB\begin{pmatrix}
			0\\1
		\end{pmatrix},  \\
		\tilde{P}_i(0)=DT(x_0)P^0_i,
	\end{array}
	\right.
\end{equation*}
where $A$   and $B$ are constants defined by \eqref{def ab}. Therefore,  we obtain formally  the dynamical system \eqref{ds} and conclude the proof.
\end{proof}
\begin{remark}
    Notice  that the strong localization assumption $\mathrm{supp}\	\omega_{i,\varepsilon}(\cdot,t)\subset B(x_{i,\varepsilon}(t), C \varepsilon)$ is merely used in the derivation of upper bound \eqref{upbd}. In what follows, we will derive an estimate for $\frac{1 }{\gamma_i}\iint\omega_{i,\varepsilon}(x,t)\omega_{i,\varepsilon}(y,t)\ln|x-y|dxdy$  by using the conservation of energy, so that   only a relatively weaker localization property   needs to be established.
\end{remark}

\section{Concentration estimates of the vorticity}
In the previous section, we have derived the dynamic system \eqref{ds} in a  formal way by assuming \eqref{assume supp}. In the rest of this paper, we aim to rigorously prove \eqref{ds}. We first  establish some estimates for the concentration properties for vorticity by using  conservation of energy.

Let $\rho$ be the constant in Theorem \ref{thm-main}. Since $|P^0_i-P^0_j|>4\rho$ for any $i\neq j$, by the continuity of solution
we can find $T\in (0,T^*)$ satisfying
\begin{equation}\label{defT}
    \min\limits_{i\neq j}\min\limits_{t\in[0,T]}|P_i(t)-P_j(t)|\geq 4\rho.
\end{equation}
Let
\begin{equation}\label{defd}
    \overline{d}=\max\limits_{t\in[0,T]}\left\{\max\limits_{1\leq i\leq N}|P_i(t)|,\max_{i\neq j}|P_i(t)-P_j(t)|\right\}.
\end{equation}
Define
\begin{equation}\label{defTep}
    T_\varepsilon=\max\left\{t\in[0,T]:\Lambda_{i,\varepsilon}(s)\subset B\left(x_0+\frac{1}{|\ln \varepsilon|}P_i(s),\frac{1}{|\ln \varepsilon|}\rho\right), \forall s\in [0,t],\forall i\in\{1,\ldots,N\}\right\}.
\end{equation}
Without loss of generality, hereafter we assume that $\varepsilon_0<\rho$ so that $T_\varepsilon>0$ for any $\varepsilon\in(0,\varepsilon_0)$ by continuity. It is obvious that
\begin{equation}\label{3-4}
    \begin{cases}
        |x|\leq \frac{\overline{d}+\rho}{|\ln \varepsilon|}+|x_0|,&\quad \forall x\in\Lambda_{i,\varepsilon}(t)\quad \forall t\in[0,T_\varepsilon]\quad\forall i,\\
        |x-y|\geq \frac{2\rho}{|\ln \varepsilon|},&\quad \forall x\in\Lambda_{i,\varepsilon}(t)\quad \forall y\in\Lambda_{j,\varepsilon}(t)\quad\forall t\in[0,T_\varepsilon]\quad\forall i\neq j.
    \end{cases}
\end{equation}
\subsection{Energy estimate.}
The kinetic energy is defined as follows:
\begin{equation}\label{defEn}
    E(t)=-\iint\mathcal{G}_K (x,y)\omega_\varepsilon(x,t)\omega_\varepsilon(y,t)dxdy,
\end{equation}
which can be decomposed in the following way
\begin{equation}\label{decompE}
    E(t)=\sum_iE_i(t)+2\sum_{i>j}E_{i,j}(t),
\end{equation}
where  the self-interaction energy is given by
\begin{equation}
    E_i(t)=-\iint\mathcal{G}_K (x,y)\omega_{i,\varepsilon}(x,t)\omega_{i,\varepsilon}(y,t)dxdy,
\end{equation}
and the interaction energy is given by
\begin{equation}
    E_{i,j}(t)=-\iint\mathcal{G}_K (x,y)\omega_{i,\varepsilon}(x,t)\omega_{j,\varepsilon}(y,t)dxdy.
\end{equation}
To simplify notations, we set
\begin{equation}
    \overline{\gamma}=\sum_i|\gamma_i|, \quad a=\sum_i|a_i|.
\end{equation}

\begin{lemma}\label{6}
    There exists $C_1=C_1(|x_0|,\overline{d},h,a)>0$ such that, for any $\varepsilon\in(0,\varepsilon_0)$ and $t\in[0, T_\ep]$,
    \begin{equation}\label{estimateE}
        \sum_iE_i(t)\geq\sum_i\frac{|a_i|^2}{|\ln \varepsilon|^{1+2b}}\frac{\sqrt{r_0^2+h^2}}{2\pi h}-C_1\frac{\ln |\ln \varepsilon|}{|\ln \varepsilon|^{2+2b}}.
    \end{equation}
\end{lemma}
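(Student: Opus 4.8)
The plan is to estimate each self-interaction energy $E_i(t)$ from below by splitting the Green's function $\mathcal G_K$ into its singular part $G_K$ and the locally bounded remainder $S_K$, as in Proposition~\ref{decompG}. First I would write
$$E_i(t)=-\iint G_K(x,y)\omega_{i,\varepsilon}(x,t)\omega_{i,\varepsilon}(y,t)\,dx\,dy-\iint S_K(x,y)\omega_{i,\varepsilon}(x,t)\omega_{i,\varepsilon}(y,t)\,dx\,dy.$$
Since $S_K\in W^{1,\infty}_{loc}$ and, by \eqref{3-4}, the supports $\Lambda_{i,\varepsilon}(t)$ stay in a fixed compact set (uniformly for $t\in[0,T_\varepsilon]$), the $S_K$-term is bounded in absolute value by $C\|S_K\|_{L^\infty}\gamma_i^2=O(|\ln\varepsilon|^{-4})$, which is absorbed into the error term on the right side of \eqref{estimateE}. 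For the singular part, using $G_K(x,y)=H(x,y)\ln|T(x)-T(y)|$ together with the smoothness of $T$ and of $H$, and the fact that $\mathrm{diam}\,\Lambda_{i,\varepsilon}(t)=O(\varepsilon)$, I would replace $H(x,y)$ by $H(x_0,x_0)=\frac{\sqrt{r_0^2+h^2}}{2\pi h}$ up to a relative error $O(|\ln\varepsilon|^{-1})$, and replace $\ln|T(x)-T(y)|$ by $\ln|x-y|$ up to an additive $O(1)$ error (both justified exactly as in \eqref{2-10}--\eqref{2-11} of the formal derivation). This reduces the problem to bounding below the quantity $\iint \ln|x-y|\,\omega_{i,\varepsilon}(x,t)\omega_{i,\varepsilon}(y,t)\,dx\,dy$.

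The key quantitative input is the lower bound
$$\iint \ln|x-y|\,\omega_{i,\varepsilon}(x,t)\omega_{i,\varepsilon}(y,t)\,dx\,dy\ \ge\ -\gamma_i^2|\ln\varepsilon|+O\!\left(\frac{\ln|\ln\varepsilon|}{|\ln\varepsilon|^2}\right),$$
whose proof is essentially the rearrangement-inequality computation already carried out in Section~2: since $0\le\omega_{i,\varepsilon}(\cdot,t)\le M\varepsilon^{-2}$ (after normalizing the sign of $\omega_i$) with total mass $\gamma_i$, the symmetric-decreasing rearrangement of $\omega_{i,\varepsilon}(\cdot,t)$ is dominated by the indicator $M\varepsilon^{-2}\mathbf 1_{B(0,r_\varepsilon)}$ with $r_\varepsilon=\varepsilon\sqrt{\gamma_i/(\pi M)}$, and since $s\mapsto\ln s$ is increasing, the double integral against $\ln|x-y|$ is minimized (over such densities) by this indicator configuration; an explicit computation of $\frac{(2\pi M)^2}{\varepsilon^4}\int_0^{r_\varepsilon}\!\!\int_0^{r_\varepsilon}\cdots$ — equivalently $\frac{2\pi M}{\varepsilon^2}\int_0^{r_\varepsilon}s\ln s\,ds$ normalization as in \eqref{2-12} — gives the stated bound with the $\ln|\ln\varepsilon|$ correction coming from $\ln r_\varepsilon=\ln\varepsilon+O(\ln|\ln\varepsilon|)$. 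Multiplying through by $-H(x_0,x_0)$ and summing over $i$, using $\gamma_i^2=a_i^2|\ln\varepsilon|^{-4}$, produces exactly $\sum_i\frac{|a_i|^2}{|\ln\varepsilon|^3}\frac{\sqrt{r_0^2+h^2}}{2\pi h}$ as the main term, with all remainders of size at most $C\,\ln|\ln\varepsilon|\,|\ln\varepsilon|^{-4}$.

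The main obstacle, and the point requiring care, is tracking the interplay between the two relative errors in the replacement $H(x,y)\ln|T(x)-T(y)|\rightsquigarrow H(x_0,x_0)\ln|x-y|$: the factor $\ln|x-y|$ is of size $|\ln\varepsilon|$, so a relative $O(|\ln\varepsilon|^{-1})$ error in $H$ contributes an $O(1)$ term to $E_i$ after multiplying by $\gamma_i\gamma_i=O(|\ln\varepsilon|^{-4})$ — this is harmless — but one must check that the $\nabla^\perp H$-type cross terms and the $O(1)$ discrepancy between $\ln|T(x)-T(y)|$ and $\ln|x-y|$ genuinely contribute only at order $|\gamma_i|^2=O(|\ln\varepsilon|^{-4})$, not larger; this is where the smoothness bound $\bigl|\ln(|T(x)-T(y)|/|x-y|)\bigr|\le C$ on $\Lambda_{i,\varepsilon}(t)$ and the Lipschitz continuity of $H$ on the compact set from \eqref{3-4} are used. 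Collecting all error contributions and identifying the dominant one as $O(\ln|\ln\varepsilon|/|\ln\varepsilon|^4)$ then yields \eqref{estimateE} with a constant $C_1$ depending only on $|x_0|$, $\overline d$, $h$ and $a$.
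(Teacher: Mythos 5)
Your proposal has a genuine gap: estimating $E_i(t)$ directly at positive times cannot work, because the tool you invoke gives the \emph{wrong direction} of inequality. After the reductions you describe, $E_i(t)\approx-H(x_0,x_0)\iint\ln|x-y|\,\omega_{i,\varepsilon}\omega_{i,\varepsilon}\,dx\,dy$, so a \emph{lower} bound on $E_i(t)$ requires an \emph{upper} bound on $\iint\ln|x-y|\,\omega_{i,\varepsilon}\omega_{i,\varepsilon}\,dx\,dy$ (since $-H(x_0,x_0)<0$). The rearrangement/bathtub argument you cite --- exactly \eqref{2-12} --- produces the \emph{lower} bound $\iint\ln|x-y|\,\omega_{i,\varepsilon}\omega_{i,\varepsilon}\,dx\,dy\ge-\gamma_i^2|\ln\varepsilon|+O(\cdot)$, which after multiplication by $-H(x_0,x_0)$ yields $E_i(t)\le(\text{main term})+O(\cdot)$, an upper bound rather than the claimed lower bound. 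The matching upper bound on the logarithmic integral would require $\mathrm{diam}\,\Lambda_{i,\varepsilon}(t)=O(\varepsilon)$, which you assert but which is not available at $t>0$: the definition \eqref{defTep} of $T_\varepsilon$ only confines $\Lambda_{i,\varepsilon}(t)$ to a ball of radius $\rho/|\ln\varepsilon|$, so the elementary upper bound is merely $\gamma_i^2\ln(2\rho/|\ln\varepsilon|)\approx-\gamma_i^2\ln|\ln\varepsilon|$, which is too weak by a factor of roughly $\ln|\ln\varepsilon|/|\ln\varepsilon|$. The claim $\mathrm{diam}\,\Lambda_{i,\varepsilon}(t)=O(\varepsilon)$ for $t>0$ is precisely the type of sharp concentration the whole paper is trying to establish, so assuming it here is circular.

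The paper's proof of Lemma~\ref{6} circumvents this by invoking \emph{conservation of energy}, which is the essential ingredient missing from your plan. It computes $\sum_iE_i(0)$ at $t=0$, where the initial support genuinely lies in a ball of radius $\varepsilon$, so the elementary upper bound $\iint\ln|x-y|\,\omega_{i,\varepsilon}\omega_{i,\varepsilon}\,dx\,dy\le\gamma_i^2\ln(2\varepsilon)$ is sharp and gives $\sum_iE_i(0)\ge\sum_i|a_i|^2\frac{\sqrt{r_0^2+h^2}}{2\pi h}|\ln\varepsilon|^{-3}-C_1'|\ln\varepsilon|^{-4}$. It then bounds the interaction energies $|E_{i,j}(t)|=O(\ln|\ln\varepsilon|/|\ln\varepsilon|^4)$ uniformly for $t\in[0,T_\varepsilon]$ and $i\neq j$, using only the separation $|x-y|\ge2\rho/|\ln\varepsilon|$ between distinct supports from \eqref{3-4}, which \emph{is} guaranteed on $[0,T_\varepsilon]$. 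Finally, from $E(t)=E(0)$ and the decomposition \eqref{decompE}, one writes $\sum_iE_i(t)=\sum_iE_i(0)+2\sum_{i>j}\big(E_{i,j}(0)-E_{i,j}(t)\big)$, transferring the $t=0$ lower bound to all $t\in[0,T_\varepsilon]$. Without the conservation step there is no mechanism forbidding the self-energies from decaying as the vorticity potentially spreads within the $O(1/|\ln\varepsilon|)$-sized balls permitted on $[0,T_\varepsilon]$.
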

\begin{proof}
    Denote $|X|=\sqrt{|x|^2+h^2}$ and  $|X_0|=\sqrt{|x_0|^2+h^2}$. By using  arguments similar as the previous section (in particular, the proof of \eqref{2-11} and \eqref{2-12}), one can show that
    \begin{equation*}
    \begin{split}
        \sum_iE_i(0)&=\sum_i|\gamma_i|^2\frac{|X_0|}{2\pi h}|\ln \varepsilon|+O(\gamma_i^2)\\
        &\geq\sum_i|a_i|^2\frac{|X_0|}{2\pi h}\frac{1}{|\ln \varepsilon|^{1+2b}}-C'_1\frac{1}{|\ln \varepsilon|^{2+2b}},
    \end{split}
    \end{equation*}
    where $C_1'$ is a constant depending on $|x_0|,\overline{d},h,a$. \\
    Now, for the interaction energy, by our choice of $T_\ep$, we get that for $\varepsilon\in(0,\varepsilon_0)$, $t\in[0, T_\ep]$ and $i\not=j$,
    \begin{equation*}
    \begin{split}
        |E_{i,j}(t)|&\leq \left|\iint\frac{\sqrt{|X||Y|}}{2\pi h}\ln |T(x)-T(y)|\omega_{i,\varepsilon}(x,t)\omega_{j,\varepsilon}(y,t)dxdy\right|+O(|\gamma_i\gamma_j|)\\
        &=\left| \iint\frac{\sqrt{|X||Y|}}{2\pi h}\ln |x-y|\omega_{i,\varepsilon}(x,t)\omega_{j,\varepsilon}(y,t)dxdy\right|+O(|\gamma_i\gamma_j|)\\
        &\leq C_1''|\gamma_i\gamma_j|\ln \left|\frac{\ln \varepsilon}{2\rho}\right|+O\left(\frac{1}{|\ln \varepsilon|^{2+2b}}\right)=O\left(\frac{\ln |\ln \varepsilon|}{|\ln \varepsilon|^{2+2b}}\right).
     \end{split}
    \end{equation*}

    So by the conservation of energy (i.e., $E(t)=E(0)$), we obtain
    \begin{equation*}
        \begin{split}
            \sum_iE_i(t)&=\sum_iE_i(0)+2\sum_{i>j}(E_{i,j}(0)-E_{i,j}(t))\\
            &\geq\sum_i\frac{|a_i|^2}{|\ln \varepsilon|^{1+2b}}\frac{|X_0|}{2\pi h}-C_1\frac{\ln |\ln \varepsilon|}{|\ln \varepsilon|^{2+2b}},
        \end{split}
    \end{equation*}
    which is the desired estimate \eqref{estimateE} and therefore finishes the proof.
\end{proof}
We take  $\varepsilon_0$ smaller such that $\ln |\ln \varepsilon|>1$ for any $\varepsilon\in(0,\varepsilon_0)$.
\begin{proposition}
    There exists $C_2=C_2(|x_0|,\overline{d},h,a)>0$ such that, for any $\varepsilon\in(0,\varepsilon_0)$,
    \begin{equation}\label{7}
    \begin{split}
        \mathcal{G}_i(t):=&\iint\omega_{i,\varepsilon}(x,t)\omega_{i,\varepsilon}(y,t)\ln \left(\frac{|\ln \varepsilon|^{\frac{1+b}{2}}|x-y|}{\varepsilon}\right)\Pi \left(|x-y|\geq\frac{\varepsilon}{|\ln \varepsilon|^{\frac{1+b}{2}}}\right)dxdy\\
        &\leq C_2\frac{\ln |\ln \varepsilon|}{|\ln \varepsilon|^{2+2b}}, \quad \forall t\in [0,T_\varepsilon],\quad \forall i=1,...,N,
    \end{split}
    \end{equation}
     where $\Pi (\cdot)$ stands for the indicator function of a subset.
\end{proposition}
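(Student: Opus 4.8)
The plan is to extract the bound on $\mathcal{G}_i(t)$ from the lower bound on the self-interaction energy in Lemma \ref{6} together with the conservation of energy, mimicking the strategy of \cite{BCM3}. First I would rewrite each self-interaction energy $E_i(t)$ using the decomposition of Green's function from Proposition \ref{decompG}. Writing $\mathcal G_K = G_K + S_K$ with $G_K(x,y) = H(x,y)\ln|T(x)-T(y)|$, the smooth part $S_K\in W^{1,\infty}_{loc}$ contributes only $O(\gamma_i^2)=O(|\ln\ep|^{-4})$ on the support set $\Lambda_{i,\ep}(t)$, which lies in a bounded region by \eqref{3-4}. For the singular part, since $T$ is a $C^1$-diffeomorphism with $DT$ bounded and invertible on the relevant compact set, one has $c|x-y|\le |T(x)-T(y)|\le C|x-y|$ there, so that $\ln|T(x)-T(y)| = \ln|x-y| + O(1)$; combined with $H(x,y) = \frac{\sqrt{|X||Y|}}{2\pi h} = \frac{|X_0|}{2\pi h} + O(|\ln\ep|^{-1})$ on $\Lambda_{i,\ep}(t)$, this yields
\begin{equation*}
    -E_i(t) = \frac{|X_0|}{2\pi h}\iint \omega_{i,\ep}(x,t)\omega_{i,\ep}(y,t)\ln|x-y|\,dxdy + O\!\left(\frac{\ln|\ln\ep|}{|\ln\ep|^4}\right).
\end{equation*}

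Next I would split the $\ln|x-y|$ integral according to whether $|x-y|\gtrless \ep/|\ln\ep|$, and insert the normalization $\ln|x-y| = \ln\frac{|\ln\ep||x-y|}{\ep} + \ln\frac{\ep}{|\ln\ep|}$. The term proportional to $\ln\frac{\ep}{|\ln\ep|} = -|\ln\ep| - \ln|\ln\ep|$ gives, after multiplying by $\gamma_i^2 = a_i^2/|\ln\ep|^4$, exactly the leading term $-\frac{|a_i|^2}{|\ln\ep|^3}\frac{|X_0|}{2\pi h}$ up to an $O(\ln|\ln\ep|/|\ln\ep|^4)$ error. The part of the integral on $\{|x-y|<\ep/|\ln\ep|\}$ where $\ln\frac{|\ln\ep||x-y|}{\ep}<0$ is bounded below by a rearrangement/layer-cake argument as in \eqref{2-12}: using $|\omega_{i,\ep}|\le M\ep^{-2}$ and $\int\omega_{i,\ep}=\gamma_i$, the most negative configuration is the characteristic function of a disk of radius $\sim\ep/|\ln\ep|$, giving a contribution of order $\gamma_i^2\log|\ln\ep| = O(\ln|\ln\ep|/|\ln\ep|^4)$. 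What remains is precisely $\frac{|X_0|}{2\pi h}\,\mathcal G_i(t)$ plus controlled errors. Feeding this into the chain
\begin{equation*}
    \sum_i\frac{|a_i|^2}{|\ln\ep|^3}\frac{|X_0|}{2\pi h} - C_1\frac{\ln|\ln\ep|}{|\ln\ep|^4} \le \sum_i E_i(t) \le \sum_i\frac{|a_i|^2}{|\ln\ep|^3}\frac{|X_0|}{2\pi h} - \frac{|X_0|}{2\pi h}\sum_i\mathcal G_i(t) + C\frac{\ln|\ln\ep|}{|\ln\ep|^4},
\end{equation*}
and using that each $\mathcal G_i(t)\ge 0$ (the integrand is nonnegative on the region where the indicator is active), I can rearrange to get $\frac{|X_0|}{2\pi h}\mathcal G_i(t) \le (C_1+C)\frac{\ln|\ln\ep|}{|\ln\ep|^4}$, which is \eqref{7} after dividing by the positive constant $\frac{|X_0|}{2\pi h} = \frac{\sqrt{r_0^2+h^2}}{2\pi h}$.

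The main obstacle I anticipate is the lower bound for the "close-range" part of the integral, i.e. controlling $\iint_{|x-y|<\ep/|\ln\ep|}\omega_{i,\ep}(x,t)\omega_{i,\ep}(y,t)\ln\frac{|\ln\ep||x-y|}{\ep}\,dxdy$ from below without any a priori strong localization of the support — only $L^\infty$ and mass bounds are available here, since on $[0,T_\ep]$ we only know $\Lambda_{i,\ep}(t)$ sits in a disk of radius $\rho/|\ln\ep|$, not radius $C\ep$. This is exactly the point flagged in the Remark after the formal derivation: one cannot use \eqref{upbd}, only the rearrangement-type lower bound. The care needed is to verify that the extremal (most-concentrated) competitor under the constraints $0\le\omega_{i,\ep}\le M\ep^{-2}$, $\int\omega_{i,\ep}=\gamma_i$ still produces only an $O(\ln|\ln\ep|/|\ln\ep|^4)$ deficit; this is a standard bathtub-principle computation, $\frac{2\pi M}{\ep^2}\int_0^{r}s\ln(|\ln\ep|s/\ep)\,ds$ with $r = \ep\sqrt{\gamma_i/(\pi M)}\sim\ep/|\ln\ep|$, but it must be done cleanly since it is the crux of why the weaker localization suffices. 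A secondary (routine) point is checking that the constant in $c|x-y|\le|T(x)-T(y)|$ is uniform over $\ep\in(0,\ep_0)$ and $t\in[0,T_\ep]$, which follows from $DT$ being continuous and non-degenerate on the fixed compact set determined by $|x_0|$, $\overline d$ and $h$ via \eqref{3-4}.
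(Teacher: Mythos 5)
Your proposal is correct and follows essentially the same route as the paper's own proof: decompose $\mathcal G_K$ via Proposition \ref{decompG}, discard $S_K$ as an $O(\gamma_i^2)$ error, replace $\ln|T(x)-T(y)|$ by $\ln|x-y|$ up to $O(\gamma_i^2)$, split $\ln|x-y|$ at the scale $\ep/|\ln\ep|$, control the close-range piece by a rearrangement computation, and combine with Lemma~\ref{6} and sign-definiteness of $\omega_{i,\ep}$. The only cosmetic difference is that you Taylor-expand $H(x,y)$ to the constant $\frac{|X_0|}{2\pi h}$ at the outset, whereas the paper keeps the prefactor $\frac{\sqrt{|X||Y|}}{2\pi h}$ inside its split $G_i^{(1)}-G_i^{(2)}+G_i^{(3)}$ and only uses its lower bound at the final step to pass from $G_i^{(2)}$ to $\mathcal G_i$; both bookkeepings give the same result.
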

\begin{proof}
    In view that $S_{K} (x,y)\in W_{loc}^{1,\infty}$  and $\ln \frac{|T(x)-T(y)|}{|x-y|}\leq C$, there exist positive constants $C_2'$ and  $C_2''$ depending on $|x_0|,a,\overline{d},h$ such that for all $ t\in[0,T_\varepsilon]$ and for all $\varepsilon\in(0,\varepsilon_0)$,
    \begin{equation*}
        \begin{split}
            E_i(t)&\leq-\iint\frac{\sqrt{|X||Y|}}{2\pi h}\ln |T(x)-T(y)|\omega_{i,\varepsilon}(x,t)\omega_{i,\varepsilon}(y,t)dxdy+C_2'\frac{1}{|\ln \varepsilon|^{2+2b}}\\
            &\leq\iint\frac{\sqrt{|X||Y|}}{2\pi h}\ln |x-y|^{-1}\omega_{i,\varepsilon}(x,t)\omega_{i,\varepsilon}(y,t)dxdy+C_2''\frac{1}{|\ln \varepsilon|^{2+2b}}\\
    &:= G_i^{(1)}(t)-G^{(2)}_i(t)-G_i^{(3)}(t)+C_2''\frac{1}{|\ln \varepsilon|^{2+2b}},
     \end{split}
    \end{equation*}
 where
    \begin{equation*}
        \begin{split}
           & G_i^{(1)}(t)=\iint\frac{\sqrt{|X||Y|}}{2\pi h}\ln \left(\frac{|\ln \varepsilon|^{\frac{1+b}{2}}}{\varepsilon}\right)\omega_{i,\varepsilon}(x,t)\omega_{i,\varepsilon}(y,t)dxdy,\\
           &G^{(2)}_i(t)=\iint\frac{\sqrt{|X||Y|}}{2\pi h}\ln \left(\frac{|\ln \varepsilon|^{\frac{1+b}{2}}|x-y|}{\varepsilon}\right)\omega_{i,\varepsilon}(x,t)\omega_{i,\varepsilon}(y,t)\Pi \left(|x-y|\geq\frac{\varepsilon}{|\ln \varepsilon|^{\frac{1+b}{2}}}\right)dxdy,\\
           &G^{(3)}_i(t)=\iint\frac{\sqrt{|X||Y|}}{2\pi h}\ln \left(\frac{|\ln \varepsilon|^{\frac{1+b}{2}}|x-y|}{\varepsilon}\right)\omega_{i,\varepsilon}(x,t)\omega_{i,\varepsilon}(y,t)\Pi \left(|x-y|<\frac{\varepsilon}{|\ln \varepsilon|^{\frac{1+b}{2}}}\right)dxdy.
        \end{split}
    \end{equation*}
    By the definition of $T_\varepsilon$ and direct computation we deduce
    \begin{equation}
        \begin{split}
            G^{(1)}_i(t)&\leq\frac{|X_0|}{2\pi h}\gamma_i^2\ln \left(\frac{|\ln \varepsilon|^{\frac{1+b}{2}}}{\varepsilon}\right)+C_{2,1}\frac{1}{|\ln \varepsilon|^{3+2b}}\ln \left(\frac{|\ln \varepsilon|^{\frac{1+b}{2}}}{\varepsilon}\right)\\
            &\leq\frac{|X_0|}{2\pi h}a_i^2\frac{1}{|\ln \varepsilon|^{1+2b}}+C_{2,1}'\frac{\ln |\ln \varepsilon|}{|\ln \varepsilon|^{2+2b}}.
        \end{split}
    \end{equation}
    Denote $c\wedge d=\min\{c,d\}$ for any $c,d\in \mathbb R$. Let ${r}_{\ep, i}=\sqrt{\frac{|\gamma_i|}{M\pi}} \ep$ so that $
        \frac{M}{\varepsilon^2}\pi{r}_{\ep, i}^2=|\gamma_i|,$
    then we deduce from the rearrangement inequality that
    \begin{equation*}
    \begin{split}
        &\int\omega_{i,\varepsilon}(y,t) \ln \left(\frac{\varepsilon}{|\ln \varepsilon|^{\frac{1+b}{2}}|x-y|}\right)\Pi \left(|x-y|<\frac{\varepsilon}{|\ln \varepsilon|^{\frac{1+b}{2}}}\right)dy\\
        &\leq\frac{M}{\varepsilon^2}\int_0^{{r}_{\ep, i}\wedge \frac{\varepsilon}{|\ln \varepsilon|^{(1+b)/2}}}2\pi r\ln \left(\frac{\varepsilon}{|\ln \varepsilon|^{\frac{1+b}{2}}r}\right)dr\\
        &=\frac{2\pi M}{\varepsilon^2}\left[\frac{\left({r}_{\ep, i}\wedge \frac{\varepsilon}{|\ln \varepsilon|^{(1+b)/2}}\right)^2}{4}-\frac{\left({r}_{\ep, i}\wedge \frac{\varepsilon}{|\ln \varepsilon|^{(1+b)/2}}\right)^2}{2}\ln \frac{|\ln\varepsilon|^{\frac{1+b}{2}}\left({r}_{\ep, i}\wedge \frac{\varepsilon}{|\ln \varepsilon|^{(b+1)/2}}\right)}{\varepsilon}\right]\\
        &\leq C_{2,2}\frac{1}{|\ln \varepsilon|^{1+b}},
    \end{split}
    \end{equation*}
    which gives
    \begin{equation}
        G^{(3)}_i(t)\leq C_{2,2}'\gamma_i\frac{1}{|\ln \varepsilon|^{1+b}}\leq C_{2,2}''\frac{1}{|\ln \varepsilon|^{2+2b}}.
    \end{equation}

   It follows immediately from a combination of  the above estimates that
    \begin{equation}
       \sum_iE_i(t) \leq\frac{|X_0|}{2\pi h}\sum_i a_i^2\frac{1}{|\ln \varepsilon|^{1+2b}}-\sum_iG^{(2)}_i(t)+\overline{C}_{2,2}\frac{\ln |\ln \varepsilon|}{|\ln \varepsilon|^{2+2b}},
    \end{equation}
    which,  together with  Lemma \ref{6}, yields
    \begin{equation*}
        \sum_iG^{(2)}_i(t)\leq(C_1+\overline{C}_{2,2})\frac{\ln |\ln \varepsilon|}{|\ln \varepsilon|^{2+2b}}.
    \end{equation*}
    Since $\omega_{i,\ep}(0)$ is assumed to have a definite sign, we infer from the above inequality that
    \begin{equation*}
        \mathcal{G}_i(t)\leq C_2\frac{\ln |\ln \varepsilon|}{|\ln \varepsilon|^{2+2b}}.
    \end{equation*}
    This completes the proof of \eqref{7}.
\end{proof}

\subsection{Mass concentration and bound on the moment of inertia}
By using the previous proposition, we are able to show that the following \emph{ weak localization} result, which means that the mass of each vortex helices is concentrated in a disk with vanishing radius.
\begin{proposition}\label{11}
    For $\varepsilon\in(0,\varepsilon_0)$ and $t\in [0,T_\varepsilon]$,  there exist two positive constants $C_3$ and $C_4$ depending on $|x_0|,a_i,\overline{d},h, \rho$  and points $q^{i,\varepsilon}(t)\in \mathbb{R}^2$,  such that if $R>\exp(C_3\ln |\ln \varepsilon|)$, then
    \begin{equation}\label{10}
         \int_{B\left(q^{i,\varepsilon}(t),\frac{\varepsilon}{|\ln \varepsilon|^{(1+b)/2}}R\right)}|\omega_{i,\varepsilon}(x,t)|dx\geq |\gamma_i|-\frac{C_4\ln |\ln \varepsilon|}{|\ln \varepsilon|^{1+b}\ln  R}.
    \end{equation}
\end{proposition}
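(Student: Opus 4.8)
The plan is to deduce the weak localization \eqref{10} from the energy inequality \eqref{7} by the standard measure-concentration argument (cf. \cite{MP, BCM3, DLM}), carried out in two stages: first extract a ``core'' disk of intermediate radius (polynomial in $|\ln\varepsilon|$) carrying at least half of the mass, and then bootstrap to all larger radii keeping the \emph{same} centre. Since $\omega_{i,\varepsilon}(\cdot,t)$ has a definite sign, set $\mu_t:=|\omega_{i,\varepsilon}(\cdot,t)|$, a nonnegative measure with $\mu_t(\mathbb R^2)=|\gamma_i|$, and write $a_\varepsilon:=\varepsilon/|\ln\varepsilon|$ and $\delta_\varepsilon:=C_2\ln|\ln\varepsilon|/|\ln\varepsilon|^4$, so that $\mathcal G_i(t)\le\delta_\varepsilon$ on $[0,T_\varepsilon]$ by \eqref{7} and $\mathcal G_i(t)=\iint\mu_t(dx)\mu_t(dy)\ln(|x-y|/a_\varepsilon)\Pi(|x-y|\ge a_\varepsilon)$. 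On $\{|x-y|\ge a_\varepsilon\}$ the argument of the logarithm is $\ge1$, hence the integrand is nonnegative there; restricting the integral defining $\mathcal G_i(t)$ to the smaller set $\{|x-y|\ge a_\varepsilon R\}$ (for $R\ge e$) and bounding the logarithm from below by $\ln R$ yields
\begin{equation*}
    \int \mu_t\big(B(x,a_\varepsilon R)^c\big)\,\mu_t(dx)\;=\;\iint_{\{|x-y|\ge a_\varepsilon R\}}\mu_t(dx)\,\mu_t(dy)\;\le\;\frac{\delta_\varepsilon}{\ln R}.
\end{equation*}

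For the first stage, put $R_0:=\exp\big(C_3'\ln|\ln\varepsilon|\big)=|\ln\varepsilon|^{C_3'}$, so that $\delta_\varepsilon/\ln R_0=C_2/(C_3'|\ln\varepsilon|^4)$; choosing $C_3'\ge 2C_2/a_i^2$ makes this $\le\tfrac12|\gamma_i|^2$, and dividing the displayed inequality (with $R=R_0$) by $|\gamma_i|$ shows that the $\mu_t$-average of $x\mapsto\mu_t\big(B(x,a_\varepsilon R_0)^c\big)$ does not exceed $\tfrac12|\gamma_i|$. Hence there is a point $q^{i,\varepsilon}(t)\in\mathrm{supp}\,\mu_t$ with $\mu_t\big(B(q^{i,\varepsilon}(t),a_\varepsilon R_0)\big)\ge\tfrac12|\gamma_i|$. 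For the second stage, let $R\ge\max\{2R_0,4\}$ and abbreviate $q:=q^{i,\varepsilon}(t)$: if $x\in B(q,a_\varepsilon R_0)$ and $y\notin B(q,a_\varepsilon R)$, then $|x-y|\ge a_\varepsilon(R-R_0)\ge a_\varepsilon R/2\ge 2a_\varepsilon$, so the indicator in \eqref{7} equals $1$ and $\ln(|x-y|/a_\varepsilon)\ge\ln(R/2)\ge\tfrac12\ln R$; keeping only this part of the nonnegative integrand of $\mathcal G_i(t)$ gives
\begin{equation*}
    \delta_\varepsilon\;\ge\;\mu_t\big(B(q,a_\varepsilon R_0)\big)\,\mu_t\big(B(q,a_\varepsilon R)^c\big)\,\tfrac12\ln R\;\ge\;\tfrac14|\gamma_i|\,\mu_t\big(B(q,a_\varepsilon R)^c\big)\ln R .
\end{equation*}
Consequently $\int_{B(q,a_\varepsilon R)}|\omega_{i,\varepsilon}(x,t)|\,dx=|\gamma_i|-\mu_t\big(B(q,a_\varepsilon R)^c\big)\ge|\gamma_i|-4\delta_\varepsilon/(|\gamma_i|\ln R)$, and since $|\gamma_i|=|a_i|/|\ln\varepsilon|^2$ the subtracted term equals $4C_2\ln|\ln\varepsilon|/(|a_i|\,|\ln\varepsilon|^2\ln R)$, which is exactly \eqref{10} with $C_4:=4C_2/|a_i|$. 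Finally, since $2R_0\le|\ln\varepsilon|^{C_3'+1}$ and $4\le|\ln\varepsilon|^{C_3'+1}$ once $\varepsilon_0$ is small, setting $C_3:=C_3'+1$ ensures that $R>\exp(C_3\ln|\ln\varepsilon|)$ forces $R\ge\max\{2R_0,4\}$, so the estimate applies on the whole admissible range.

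The computation is essentially routine once \eqref{7} is in hand, so the step I expect to require the most care is the two-scale structure together with the bookkeeping of constants. A single application of the averaged inequality at scale $R$ would only produce a centre depending on $R$, whereas the statement requires one centre valid for \emph{all} admissible radii; this is what forces the intermediate core scale $R_0$, and one must check that the ratio $\delta_\varepsilon/|\gamma_i|^2$ — which is not $o(1)$ but only $O(\ln|\ln\varepsilon|)$ — still lets $R_0$ remain polynomial in $|\ln\varepsilon|$. This is precisely why the admissibility threshold in \eqref{10} is $\exp(C_3\ln|\ln\varepsilon|)$ rather than an $\varepsilon$-independent constant.
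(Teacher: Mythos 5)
Your proof is correct, but it takes a genuinely different route from the paper's. The paper uses the classical coordinate-slicing argument (in the style of Marchioro--Pulvirenti): it finds a median $x_1^*$ in the $x_1$-direction, bounds the product $M_1 M_3$ of the masses on either side of a strip of width $\sim\varepsilon L_1/|\ln\varepsilon|$ via \eqref{7}, extracts a core mass $M_2\ge\gamma'/8$, bootstraps on a doubled strip, repeats the whole construction in the $x_2$-direction for the restricted vorticity $\tilde\omega$, and finally circumscribes the resulting rectangle by a disk of radius $R=2\sqrt2\,L$. You instead work directly in $\mathbb R^2$: you test \eqref{7} against the measure $\mu_t$ itself to show that the $\mu_t$-average of $x\mapsto\mu_t(B(x,a_\varepsilon R_0)^c)$ is at most $\tfrac12|\gamma_i|$, pick a single centre $q$ carrying at least half the mass in $B(q,a_\varepsilon R_0)$, and then lower-bound $\mathcal G_i(t)$ by the cross-term over $B(q,a_\varepsilon R_0)\times B(q,a_\varepsilon R)^c$ for all $R\ge\max\{2R_0,4\}$. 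Both are two-scale arguments with a polynomially-large (in $|\ln\varepsilon|$) inner scale $R_0$, and both require the same care in tracking $\delta_\varepsilon/|\gamma_i|^2=O(\ln|\ln\varepsilon|)$; your version is more compact, skips the coordinate split and the $\sqrt2$ circumscribing loss, and yields slightly tighter explicit constants ($C_4=4C_2/|a_i|$ versus the paper's $72C_2/a$). Either route establishes Proposition \ref{11}.
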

\begin{proof}
    In what follows we omit the explicit dependence on $i$ and $t$ by introducing the shorten notation
    \begin{equation*}
        \omega(x)=\frac{\gamma_i}{|\gamma_i|}\omega_{i,\varepsilon}(x,t)=|\omega_{i,\varepsilon}(x,t)|,\quad \gamma'=|\gamma_i|,\quad a=|a_i|.
    \end{equation*}
    In view that $\int\omega(x)dx=\gamma'$, there exist a point $x^*_1$ and a constant $L_1>1$ satisfying
    \begin{equation*}
        M_1:=\int_{x_1<x^*_1-\frac{\varepsilon}{|\ln \varepsilon|^{(1+b)/2}}L_1}\omega(x)dx\leq\frac{\gamma'}{2},\quad M_3:=\int_{x_1>x^*_1+\frac{\varepsilon}{|\ln \varepsilon|^{(1+b)/2}}L_1}\omega(x)dx\leq\frac{\gamma'}{2}.
    \end{equation*}
    Define
    \begin{equation*}
        M_2:=\int_{|x_1-x^*_1|\leq\frac{\varepsilon}{|\ln \varepsilon|^{(1+b)/2}}L_1}\omega(x)dx.
    \end{equation*}
    By using estimate (\ref{7}), we conclude that
    \begin{equation*}
        M_1M_3\ln 2L_1\leq C_2\frac{\ln |\ln \varepsilon|}{|\ln \varepsilon|^{2+2b}}.
    \end{equation*}
    Therefore, one computes
    \begin{equation*}
        \begin{split}
            \gamma'^2&=(M_1+M_2+M_3)^2\leq\frac{\gamma'^2}{2}+2M_2^2+2M_2(M_1+M_3)+2M_1M_3\\
            &=\frac{\gamma'^2}{2}+2\gamma' M_2+2M_1M_3\leq \frac{\gamma'^2}{2}+2\gamma' M_2+2C_2\frac{\ln |\ln \varepsilon|}{|\ln \varepsilon|^{2+2b}\ln 2L_1},
        \end{split}
    \end{equation*}
    which immediately yields
    \begin{equation*}
        M_2\geq \frac{\gamma'}{4}-\frac{C_2}{\gamma'}\frac{\ln |\ln \varepsilon|}{|\ln \varepsilon|^{2+2b}\ln 2L_1}.
    \end{equation*}
    Then we obtain
    \begin{equation}\label{M2}
        M_2\geq \frac{\gamma'}{8},\quad \forall L_1\geq L^*_1:=\frac{1}{2}\exp\left(\frac{8C_2}{a^2}\ln |\ln \varepsilon|\right).
    \end{equation}
    Now, set
    \begin{equation*}
        M_1':=\int_{x_1<x^*_1-\frac{2\varepsilon}{|\ln \varepsilon|^{(1+b)/2}}L_1}\omega(x)dx,\quad M_3':=\int_{x_1>x^*_1+\frac{2\varepsilon}{|\ln \varepsilon|^{(1+b)/2}}L_1}\omega(x)dx,
    \end{equation*}
    Using again \eqref{7} and taking into account  \eqref{M2}, we  obtain
    \begin{equation*}
        \frac{\gamma'}{8}(M_1'+M_3')\ln  L_1\leq C_2\frac{\ln |\ln \varepsilon|}{|\ln \varepsilon|^{2+2b}}\quad \forall L_1\geq L_1^*,
    \end{equation*}
    which implies
    \begin{equation}\label{8}
        M_2':=\int_{|x_1-x^*_1|\leq\frac{2\varepsilon}{|\ln \varepsilon|^{(1+b)/2}}L_1}\omega(x)dx\geq \gamma'-\frac{8C_2\ln |\ln \varepsilon|}{\gamma'|\ln \varepsilon|^{2+2b}\ln  L_1}\quad \forall L_1\geq L_1^*.
    \end{equation}
    For  $L_1>L_1^*$, repeating the same argument in the $x_2$-direction for the function
    \begin{equation*}
        \tilde{\omega}(x)=\omega(x)\Pi \left(|x_1-x^*_1|\leq\frac{2\varepsilon}{|\ln \varepsilon|^{\frac{1+b}{2}}}L_1\right),
    \end{equation*}
    we can show  that there is a point $x^*_2$ such that
    \begin{equation}\label{9}
        \int_{|x_2-x^*_2|\leq\frac{2\varepsilon}{|\ln \varepsilon|^{(1+b)/2}}L_2}\tilde{\omega}(x)dx\geq M_2'-\frac{8C_2\ln |\ln \varepsilon|}{M_2'|\ln \varepsilon|^{2+2b}\ln  L_2}\quad \forall L_2\geq L_2^*,
    \end{equation}
    where the constant
    \begin{equation*}
        L^*_2:=\frac{1}{2}\exp\left(\frac{4C_2}{(M_2')^2}\frac{\ln |\ln \varepsilon|}{|\ln \varepsilon|^{2+2b}}\right)\geq \frac{1}{2}\exp\left(\frac{8C_2}{\gamma'^2}\frac{\ln |\ln \varepsilon|}{|\ln \varepsilon|^{2+2b}}\right).
    \end{equation*}
    Since  $M_2'\geq M_2\geq \frac{\gamma'}{8}$, we obtain
    \begin{equation*}
        L^*_2\leq \frac{1}{2}\exp\left(\frac{256C_2}{\gamma'^2}\frac{\ln |\ln \varepsilon|}{|\ln \varepsilon|^{2+2b}}\right).
    \end{equation*}
    Therefore, letting $x^*=(x^*_1,x^*_2)$ and choosing
    \begin{equation}
        L_1=L_2=L>\frac{1}{2}\exp\left(\frac{256C_2}{\gamma'^2}\frac{\ln |\ln \varepsilon|}{|\ln \varepsilon|^{2+2b}}\right),
    \end{equation}
    from (\ref{8}) and (\ref{9}) we obtain
    \begin{equation*}
        \begin{split}
            \int_{B\left(x^*,\frac{2\sqrt{2}\varepsilon L}{|\ln \varepsilon|^{(1+b)/2}}\right)}\omega(x)dx&\geq \int_{|x_2-x^*_2|\leq\frac{2\varepsilon}{|\ln \varepsilon|^{(1+b)/2}}L}\tilde{\omega}(x)dx\\
            &\geq \gamma'-\frac{8C_2\ln |\ln \varepsilon|}{\gamma'|\ln \varepsilon|^{2+2b}\ln L_1}-\frac{8C_2\ln |\ln \varepsilon|}{M_2'|\ln \varepsilon|^{2+2b}\ln L_2}\\
            &\geq \gamma'-\frac{72C_2\ln |\ln \varepsilon|}{a|\ln \varepsilon|^{1+b}\ln L}.\\
        \end{split}
    \end{equation*}
    Choosing $R=2\sqrt{2}L$, $q^{i,\varepsilon}(t)=x^*$ and $C_3$, $C_4$ suitably, we derive the desired  inequality (\ref{10}) and thus complete the proof.
\end{proof}

Define the center of vorticity of the $i$-th vortex helix by
\begin{equation}\label{center}
    B^{i,\varepsilon}(t):=\frac{1}{\gamma_i}\int x\omega_{i,\varepsilon}(x,t)dx,
\end{equation}
and the corresponding moment of inertia by
\begin{equation}\label{moment}
    J_{i,\varepsilon}(t):=\int |x-B^{i,\varepsilon}(t)|^2|\omega_{i,\varepsilon}(x,t)|dx.
\end{equation}
\begin{lemma}\label{thm-moment}
    Given $\sigma\in(0,1)$ there exists $\varepsilon_\sigma\in(0,\varepsilon_0)$ such that
    \begin{equation}\label{15}
        J_{i,\varepsilon}(t)\leq \frac{1}{|\ln \varepsilon|^{\sigma+3+b}}\quad \forall t\in[0,T_\varepsilon]\quad \forall \varepsilon\in (0,\varepsilon_\sigma).
    \end{equation}
\end{lemma}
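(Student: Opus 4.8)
The plan is to obtain \eqref{15} as a fairly soft consequence of the weak localization of Proposition \ref{11} together with the a priori confinement of the support on $[0,T_\varepsilon]$ recorded in \eqref{defTep}; no differential inequality for the moment of inertia is needed at this stage.

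The first step is to trade the center of vorticity $B^{i,\varepsilon}(t)$ appearing in \eqref{moment} for the point $q^{i,\varepsilon}(t)$ furnished by Proposition \ref{11}. Since $\omega_{i,\varepsilon}(\cdot,t)$ has the definite sign of $\gamma_i$, the signed and unsigned masses agree up to sign, $\int|\omega_{i,\varepsilon}(x,t)|\,dx=|\gamma_i|$ and $\int x\,|\omega_{i,\varepsilon}(x,t)|\,dx=|\gamma_i|\,B^{i,\varepsilon}(t)$, so $B^{i,\varepsilon}(t)$ is the barycenter of the measure $|\omega_{i,\varepsilon}(\cdot,t)|\,dx$ and therefore minimizes $c\mapsto\int|x-c|^2|\omega_{i,\varepsilon}(x,t)|\,dx$. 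Hence $J_{i,\varepsilon}(t)\le\int|x-q^{i,\varepsilon}(t)|^2|\omega_{i,\varepsilon}(x,t)|\,dx$, and it is enough to bound the latter integral.

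For the second step I would fix $R=\varepsilon^{-1/2}$, which satisfies $R>\exp(C_3\ln|\ln\varepsilon|)$ once $\varepsilon$ is small, and split the integral into its contributions from $B_\varepsilon:=B\bigl(q^{i,\varepsilon}(t),\varepsilon R/|\ln\varepsilon|\bigr)$ and from $\Lambda_{i,\varepsilon}(t)\setminus B_\varepsilon$. On $B_\varepsilon$ the integrand is at most $(\varepsilon R/|\ln\varepsilon|)^2=\varepsilon/|\ln\varepsilon|^2$, so this piece is $\le|\gamma_i|\,\varepsilon/|\ln\varepsilon|^2=|a_i|\,\varepsilon/|\ln\varepsilon|^4$. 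For the complement, note that $B_\varepsilon$ carries mass $\ge|\gamma_i|-C_4\ln|\ln\varepsilon|/(|\ln\varepsilon|^2\ln R)>0$ for $\varepsilon$ small by \eqref{10}, hence $B_\varepsilon$ meets $\Lambda_{i,\varepsilon}(t)=\mathrm{supp}\,\omega_{i,\varepsilon}(\cdot,t)$; since on $[0,T_\varepsilon]$ the set $\Lambda_{i,\varepsilon}(t)$ is contained in a disk of radius $\rho/|\ln\varepsilon|$ by \eqref{defTep} and $\varepsilon R/|\ln\varepsilon|=o(1/|\ln\varepsilon|)$, this forces $|x-q^{i,\varepsilon}(t)|\le 3\rho/|\ln\varepsilon|$ for every $x\in\Lambda_{i,\varepsilon}(t)$ and $\varepsilon$ small. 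Combining this with the mass estimate \eqref{10}, which here reads $\int_{\Lambda_{i,\varepsilon}(t)\setminus B_\varepsilon}|\omega_{i,\varepsilon}|\,dx\le 2C_4\ln|\ln\varepsilon|/|\ln\varepsilon|^3$, the complementary piece is $\le 18\rho^2C_4\ln|\ln\varepsilon|/|\ln\varepsilon|^5$. Adding the two pieces yields, for all $t\in[0,T_\varepsilon]$,
\[
J_{i,\varepsilon}(t)\ \le\ \frac{|a_i|\,\varepsilon}{|\ln\varepsilon|^{4}}+\frac{18\,\rho^{2}C_4\,\ln|\ln\varepsilon|}{|\ln\varepsilon|^{5}},
\]
and since $\varepsilon|\ln\varepsilon|^{\sigma}\to0$ and $\ln|\ln\varepsilon|/|\ln\varepsilon|^{1-\sigma}\to0$ as $\varepsilon\to0$ whenever $\sigma<1$, both terms on the right are $\le\tfrac12|\ln\varepsilon|^{-\sigma-4}$ once $\varepsilon$ is smaller than some $\varepsilon_\sigma$, giving \eqref{15}.

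I do not expect a genuine obstacle: once Proposition \ref{11} is in hand the argument is essentially bookkeeping, and all of its constants are independent of $t$, so the resulting bound is automatically uniform on $[0,T_\varepsilon]$. The two places that need a little attention are (i) confirming that $q^{i,\varepsilon}(t)$ actually lies at distance $o(1/|\ln\varepsilon|)$ from $\Lambda_{i,\varepsilon}(t)$ — without this one only gets $|x-q^{i,\varepsilon}(t)|=O(1)$ on the support, which is too weak — and (ii) observing that the hypothesis $\sigma\in(0,1)$ is precisely what allows the $\ln|\ln\varepsilon|$ factor inherited from \eqref{10} to be absorbed, with the threshold $\varepsilon_\sigma$ necessarily shrinking as $\sigma\uparrow1$.
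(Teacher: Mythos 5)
Your proposal is correct and follows the same route as the paper: replace $B^{i,\varepsilon}(t)$ by $q^{i,\varepsilon}(t)$ via the variational characterization of the barycenter, split the moment integral over a small ball $B(q^{i,\varepsilon}(t),\varepsilon R/|\ln\varepsilon|)$ and its complement, and estimate the tail using Proposition~\ref{11} together with the $O(1/|\ln\varepsilon|)$ diameter of the support on $[0,T_\varepsilon]$. The only difference is your choice $R=\varepsilon^{-1/2}$ (so $\ln R=\tfrac12|\ln\varepsilon|$ and the tail mass is $O(\ln|\ln\varepsilon|/|\ln\varepsilon|^3)$), whereas the paper takes $R_\varepsilon=\exp(|\ln\varepsilon|^{\tilde\sigma})$ for an auxiliary $\tilde\sigma\in(\sigma,1)$; both choices give a tail contribution that beats $|\ln\varepsilon|^{-\sigma-4}$ precisely because $\sigma<1$, so this is a cosmetic parameter choice rather than a different argument, and your direct bound on $\max_{x\in\Lambda_{i,\varepsilon}(t)}|x-q^{i,\varepsilon}(t)|$ via a point of $B_\varepsilon\cap\Lambda_{i,\varepsilon}(t)$ is in fact slightly cleaner than the corresponding step in the paper.
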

\begin{proof}
    Without loss of generality we assume hereafter $\omega_{i,\varepsilon}(t)\geq 0$ and hence $\gamma_i>0$ and
    \begin{equation*}
        J_{i,\varepsilon}(t)=\int |x-B^{i,\varepsilon}(t)|^2\omega_{i,\varepsilon}(x,t)dx.
    \end{equation*}
    Given $\sigma\in(0,1)$, we choose $\tilde{\sigma}\in(\sigma,1)$ and let
    \begin{equation}
        B_{i,\varepsilon}(t):=B(q^{i,\varepsilon}(t), \varepsilon |\ln \varepsilon|^{-(1+b)/2}R_\varepsilon),\quad R_\varepsilon:=\exp(|\ln \varepsilon|^{\tilde{\sigma}}).
    \end{equation}
     It is clear that for sufficiently small $\ep$, there holds  $R_\varepsilon>\exp(C_3\ln |\ln \varepsilon|)$. Then,  we can apply (\ref{10}) in Proposition \ref{11} with $R=R_\varepsilon$   to ensure that 
     \begin{equation}\label{12}
         \int_{B_{i,\varepsilon}(t)}\omega_{i,\varepsilon}(y,t)dy\geq \gamma_i-\frac{C_4\ln |\ln \varepsilon|}{|\ln \varepsilon|^{\tilde{\sigma}+1+b}}\quad \forall t\in[0,T_\varepsilon].
     \end{equation}
    By the definitions of $B^{i,\varepsilon}(t)$ and $J_{i,\varepsilon}(t)$, we have
    \begin{equation*}
        \begin{split}
           J_{i,\varepsilon}(t)&=\min_{q\in \mathbb{R}^2} \int |x-q|^2\omega_{i,\varepsilon}(x,t)dx\leq \int |x-q^{i,\varepsilon}(t)|^2\omega_{i,\varepsilon}(x,t)dx\\
           &=\int_{B_{i,\varepsilon}(t)} |x-q^{i,\varepsilon}(t)|^2\omega_{i,\varepsilon}(x,t)dx+\int_{B_{i,\varepsilon}(t)^\complement} |x-q^{i,\varepsilon}(t)|^2\omega_{i,\varepsilon}(x,t)dx\\
           &\leq \gamma_i\left(\frac{\varepsilon}{|\ln \varepsilon|^{\frac{1+b}{2}}}R_\varepsilon\right)^2+\frac{C_4\ln |\ln \varepsilon|}{|\ln \varepsilon|^{\tilde{\sigma}+1+b}}\max_{x\in \Lambda_{i,\varepsilon}(t)}|x-q^{i,\varepsilon}(t)|^2.
        \end{split}
    \end{equation*}
    It follows from (\ref{12}) that $B_{i,\varepsilon}(t)\cap \Lambda_{i,\varepsilon}(t)\neq\emptyset$, then one has
    \begin{equation*}
        |q^{i,\varepsilon}(t)|\leq|x|+|x-q^{i,\varepsilon}(t)|\leq\frac{\overline{d}+\rho}{|\ln \varepsilon|}+\frac{\varepsilon R_\varepsilon}{|\ln \varepsilon|}.
    \end{equation*}
    Therefore, we deduce from the above estimates that for sufficiently small $\ep>0$,
    \begin{equation*}
        J_{i,\varepsilon}(t)\leq \frac{C_5\ln |\ln \varepsilon|}{|\ln \varepsilon|^{\tilde{\sigma}+3+b}}\leq \frac{1}{|\ln \varepsilon|^{\sigma+3+b}},
    \end{equation*}
which deduces \eqref{15} and the proof is thus finished.
\end{proof}

\subsection{Improved estimates via an iterative procedure}
It can be seen that the time $T_\ep$ in \eqref{defTep} may vanish as $\ep\to0$. In this subsection, we aim to find a uniform lower bound $T'_\rho$ so that  $T_\ep\geq   T'_\rho$ for any $\ep>0$ sufficiently small. This is achieved by an iterative procedure and a continuous argument. Being different with existing literature, in our later proof, we need to introduce a coordinate-transformed cut-off function $W_{R,\zeta}\big(DT(x_0)(x-B^{i,\varepsilon}(t))\big)$ to get sufficient cancellations.

For a matrix $Q=\begin{pmatrix}
	a&b\\c&d
\end{pmatrix}$, in the following, we denote  $Q^*:=\begin{pmatrix}
	d&-c\\-b&a
\end{pmatrix}$, so that $(Q x)^\perp=Q^* x^\perp$.  According to the decomposition in \eqref{decom DG}, we divide the velocity into four parts as follows:
\begin{equation}\label{decom v}
	v(x,t)=v^i_K(x,t)+v^i_L(x,t)+v^i_S(x,t)+F^i(x,t),
\end{equation}
where
\begin{equation*}
	v^i_K(x,t)=\left(\int H(x,y)DT(x)\frac{T(x)-T(y)}{|T(x)-T(y)|^2}\omega_{i,\varepsilon}(y,t)dy\right)^\perp,
\end{equation*}
\begin{equation*}
	v^i_L(x,t)=\frac{x^\perp}{2|X|^2}\int G_K(x,y)\omega_{i,\varepsilon}(y,t)dy,
\end{equation*}
\begin{equation*}
	v^i_S(x,t)=\int\nabla^\perp S_{K} (x,y)\omega_{i,\varepsilon}(y,t)dy,
\end{equation*}
\begin{equation}
F^i(x,t)=\sum_{i\neq j}\int\nabla^\perp \mathcal{G}_K (x,y)\omega_{j,\ep}(y,t)dy.
\end{equation}
 Since $ S_{K} (x,y)$ is bounded in $W^{1,\infty}$ on $\Lambda_{i,\varepsilon}(t)$ for every $t\in[0,T_\varepsilon]$, there is a constant $C_S=C_S(|x_0|,\overline{d},h)>0$ such that
\begin{equation}\label{estimate vs}
	|v^i_S(x,t)|\leq C_S|\gamma_i|\leq C_S\overline{\gamma}.
\end{equation}

\begin{lemma}\label{estimate vf}
    There exists a constant $C_F=C_F(|x_0|,a_i,h)$ such that for every $\varepsilon\in(0,\varepsilon_0)$ small enough, for every $i\in \{1,...,N\}$, for every $x,y\in \Lambda_{i,\varepsilon}(t)$ and for every $t\in [0,T_\varepsilon]$,
    \begin{equation}\label{14}
        \left\{
\begin{array}{ll}
|F^i(x,t)-F^i(y,t)|\leq \frac{C_F}{\rho^2}\frac{a}{|ln\varepsilon|^{b-1}}|x-y|,\\
|F^i(x,t)|\leq \frac{C_F}{\rho}\frac{a}{|\ln \varepsilon|^b},\\
\mathrm{div} F^i(x,t)=0.
\end{array}
\right.
    \end{equation}
\end{lemma}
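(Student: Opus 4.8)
\emph{Proof plan.} The starting point is to view $F^i$ as the velocity generated by the \emph{other} helices. Writing $x_c^{(i)}(t):=x_0+P_i(t)/|\ln\ep|$ and
\[
\phi^i(x,t):=\sum_{j\neq i}\int \mathcal G_K(x,y)\,\omega_{j,\ep}(y,t)\,dy ,
\]
one has $F^i=\nabla^\perp\phi^i$ (the $\nabla^\perp$ may be pulled outside the integral since the integrand is smooth in $x$ away from the supports of the $\omega_{j,\ep}$), and $\mathcal L_K\phi^i(\cdot,t)$ equals $\sum_{j\neq i}\omega_{j,\ep}(\cdot,t)$ up to a sign. On $[0,T_\ep]$ the definition \eqref{defTep} together with \eqref{defT} give $\mathrm{supp}\,\omega_{j,\ep}(\cdot,t)\subset B(x_c^{(j)}(t),\rho/|\ln\ep|)$ and $|x_c^{(i)}(t)-x_c^{(j)}(t)|\ge 4\rho/|\ln\ep|$ for $i\neq j$; hence $U_i:=B(x_c^{(i)}(t),2\rho/|\ln\ep|)$ is disjoint from every $\mathrm{supp}\,\omega_{j,\ep}(\cdot,t)$, $j\neq i$, so $\phi^i(\cdot,t)$ is $\mathcal L_K$-harmonic on $U_i$, and by \eqref{3-4} all the relevant points stay in a fixed compact set $\mathcal K=\mathcal K(|x_0|,\overline d,h,\rho)$ on which $K$ is smooth and uniformly elliptic while $H$, $S_K$ and $T$ (and the bi-Lipschitz constants of $T$) have controlled norms.

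The divergence-free statement is then immediate: for $x$ near $\Lambda_{i,\ep}(t)$ one differentiates under the integral sign — legitimate because $\mathcal G_K(\cdot,y)$ is $C^\infty$ off the diagonal — and uses the pointwise identity $\mathrm{div}_x\nabla^\perp_x u\equiv 0$. For the size bound I would estimate $F^i$ directly from \eqref{decom DG}: on $\mathcal K$ one has $|H|,|\nabla H|,|DT|,|\nabla S_K|\le C$, and bi-Lipschitzness of $T$ together with the separation of supports gives $c\rho/|\ln\ep|\le |T(x)-T(y)|\le C$ whenever $x\in U_i$ and $y\in\mathrm{supp}\,\omega_{j,\ep}(\cdot,t)$; combined with $\int|\omega_{j,\ep}(\cdot,t)|\,dy=|\gamma_j|=|a_j|/|\ln\ep|^2$ this yields $|\nabla^\perp_x\mathcal G_K(x,y)|\le C|\ln\ep|/\rho$ for $\ep$ small, and hence
\[
|F^i(x,t)|\le \frac{C|\ln\ep|}{\rho}\sum_{j\neq i}|\gamma_j|\le \frac{C_F}{\rho}\,\frac{a}{|\ln\ep|},\qquad x\in U_i .
\]
Integrating this gradient bound along segments also shows $\mathrm{osc}_{B(\xi,\rho/|\ln\ep|)}\phi^i(\cdot,t)\le C a/|\ln\ep|^2$ for every $\xi\in B(x_c^{(i)}(t),\rho/|\ln\ep|)$.

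The Lipschitz estimate is the heart of the matter, and I would deduce it from interior elliptic regularity for $\mathcal L_K$ rather than by differentiating \eqref{decom DG} a second time. Since $\phi^i(\cdot,t)-\phi^i(\xi,t)$ is $\mathcal L_K$-harmonic on $B(\xi,\rho/|\ln\ep|)\subset U_i$ and has the same Hessian as $\phi^i$, the rescaling $x=\xi+X/|\ln\ep|$ turns this into a problem on a ball of radius $\rho$ for an operator whose coefficients have $C^\alpha$-norms bounded uniformly in $\ep$, so the interior Schauder estimate gives
\[
|D^2\phi^i(\xi,t)|\le \frac{C|\ln\ep|^2}{\rho^2}\,\mathrm{osc}_{B(\xi,\rho/|\ln\ep|)}\phi^i(\cdot,t)\le \frac{C_F\,a}{\rho^2}
\]
for every $\xi\in B(x_c^{(i)}(t),\rho/|\ln\ep|)\supset\Lambda_{i,\ep}(t)$. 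Because $\Lambda_{i,\ep}(t)$ lies inside the convex ball $B(x_c^{(i)}(t),\rho/|\ln\ep|)$, on which $F^i=\nabla^\perp\phi^i$ is $C^1$, the mean value theorem then gives $|F^i(x,t)-F^i(y,t)|\le \|D^2\phi^i(\cdot,t)\|_{L^\infty(B(x_c^{(i)}(t),\rho/|\ln\ep|))}\,|x-y|\le \frac{C_F}{\rho^2}\,a\,|x-y|$ for $x,y\in\Lambda_{i,\ep}(t)$.

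The main obstacle is precisely this last bound. A naive ``differentiate \eqref{decom DG} twice'' argument would require second derivatives of the regular part $S_K$, which Proposition \ref{decompG} only controls in $W^{1,\infty}_{loc}$; even a H\"older bound on $\nabla S_K$ would be insufficient, since $|x-y|$ can be far smaller than the separation scale $\rho/|\ln\ep|$. Passing through interior elliptic regularity for $\mathcal L_K$ removes this difficulty, but one must subtract the constant $\phi^i(\xi,t)$ (equivalently, work with the oscillation rather than the sup norm of $\phi^i$) before applying the interior estimate: the crude bound $|\phi^i|\lesssim a\ln|\ln\ep|/|\ln\ep|^2$ would otherwise contribute a spurious factor $\ln|\ln\ep|$ and destroy the clean $\rho^{-2}$ dependence recorded in \eqref{14}.
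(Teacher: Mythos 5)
Your proof is correct, and it takes a genuinely different route from the paper's. The paper's proof is terse: it asserts the pointwise bounds $|\nabla\mathcal G_K(x,y)|\le C|\ln\varepsilon|/\rho$ and $|\nabla^2\mathcal G_K(x,y)|\le C|\ln\varepsilon|^2/\rho^2$ for $x\in\Lambda_{i,\varepsilon}(t)$, $y\in\Lambda_{j,\varepsilon}(t)$, $j\neq i$, citing the refined decomposition of the Green's function in \cite{CW3} for a rigorous verification, and then simply integrates against $\omega_{j,\varepsilon}$. Implicit in that second-derivative bound is control on $\nabla^2 S_K$, which Proposition \ref{decompG} alone (giving only $S_K\in W^{1,\infty}_{loc}$) does not supply; you correctly identify this as the obstruction to a naive two-fold differentiation of \eqref{decom DG}. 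Your route replaces the appeal to \cite{CW3} by an interior Schauder estimate for the $\mathcal L_K$-harmonic stream function $\phi^i$ on the $\varepsilon$-independent separation annulus, after rescaling $x\mapsto\xi+X/|\ln\varepsilon|$ to a ball of fixed radius $\rho$; the key technical point, which you handle correctly, is to apply the estimate to $\phi^i$ minus a constant (i.e.\ to its oscillation rather than its sup norm), since the crude $L^\infty$ bound $|\phi^i|\lesssim a\ln|\ln\varepsilon|/|\ln\varepsilon|^2$ would otherwise leave a spurious $\ln|\ln\varepsilon|$. You also correctly note that after rescaling the Hölder seminorm of the coefficients scales like $|\ln\varepsilon|^{-\alpha}$, so the Schauder constant is uniform in $\varepsilon$. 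The trade-off: the paper's argument is shorter once the refined decomposition of \cite{CW3} is accepted, while yours is self-contained relative to Proposition \ref{decompG} at the cost of invoking interior elliptic regularity. Both yield the stated $\rho^{-1}$ and $\rho^{-2}$ dependence and the divergence-free property (which, as you say, is immediate from $F^i=\nabla^\perp\phi^i$).
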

\begin{proof}
    Notice that when  $x$ and $y$ are close, the singularity of  Green's function $\mathcal{G}_K(x,y)  $
    is logarithmic. By using \eqref{3-4}, it can be verified that
    \begin{align*}
    	|\nabla \mathcal{G}_K(x,y)|\leq C \frac{|\ln\ep|}{\rho},\ \ |\nabla^2 \mathcal{G}_K(x,y)|\leq C \frac{|\ln\ep|^2}{\rho^2}, \ \ \forall \  x\in \Lambda_{i,\varepsilon}(t), y\in \Lambda_{j,\varepsilon}(t)
    \end{align*}
    for some $C$ depending on $|x_0|, \bar{d}, h$. A rigorous verification of the above estimates can also be obtained by using the refined decomposition of Green's function in \cite{CW3}. Then the desired conclusion follows immediately from the above estimates.
\end{proof}
Denote
    \begin{equation}\label{2C}
        c_0=\tau(r_0^2), \quad C_0=\tau(r_0^2)\left(1+\frac{r_0^2}{h^2+h\sqrt{h^2+r_0^2}}\right)
    \end{equation}
and $c_0'=1/C_0$, $C_0'=1/c_0$.  By the explicit expression of $DT(x_0)$ and $DT(x_0)^{-1}$, for every $z\in\mathbb{R}^2$, there hold
    \begin{equation*}
     c_0|z|\leq|DT(x_0)z|\leq C_0|z|,\quad c_0'|z|\leq |DT(x_0)^{-1}z|\leq C_0'|z|.
    \end{equation*}
In particular, when $x_0=0$, we have $DT(x_0)=I_2$ and $C_0=c_0=1$.

Before we proceed to the next step of estimation, we need to recall a lemma due to \cite{DLM} is needed. To this end
denote $D^\complement =\{x\in \mathbb{R}^2\backslash D\}$ for any given set $D\subset\mathbb{R}^2$.
\begin{lemma}[Lemma 4.3 in \cite{DLM}]\label{13}
    Let $\alpha\in C^1(\mathbb R^2\times[0,T_\varepsilon],\mathbb{R})$. Then for every $t\in[0,T_\varepsilon]$ and $i=1,...,N$,
    \begin{equation*}
        \frac{d}{dt}\int\alpha(x,t)\omega_{i,\varepsilon}(x,t)dx=\int\partial_t\alpha(x,t)\omega_{i,\varepsilon}(x,t)dx+\frac{1}{|\ln\varepsilon|^{1-b}}\int\nabla\alpha(x,t)\cdot v(x,t)\omega_{i,\varepsilon}(x,t)dx.
    \end{equation*}
\end{lemma}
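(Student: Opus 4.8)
The plan is to reduce this identity to a Lagrangian change of variables along the flow of the velocity field $v$. First I would record the transport structure: since $\omega_\varepsilon=\sum_i\omega_{i,\varepsilon}$ solves \eqref{HEuler} and the velocity $v=\nabla^\perp\Psi$ is one and the same for all the components, the transport nature of \eqref{HEuler} (the equation being linear in $\omega$ once $v$ is fixed, and the initial data splitting with disjoint supports) forces each $\omega_{i,\varepsilon}$ to satisfy $\partial_t\omega_{i,\varepsilon}+v\cdot\nabla\omega_{i,\varepsilon}=0$ in the distributional sense, with $\mathrm{div}\,v=0$, $\omega_{i,\varepsilon}(\cdot,t)\in L^\infty$ and $\mathrm{supp}\,\omega_{i,\varepsilon}(\cdot,t)=\Lambda_{i,\varepsilon}(t)$ bounded. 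Next I would check that $v(\cdot,t)$ is bounded and log-Lipschitz in $x$, uniformly for $t$ in compact subsets of $[0,T_\varepsilon]$, and continuous in $(x,t)$: this follows from the Biot--Savart representation together with the decomposition of $\mathcal G_K$ in Proposition \ref{decompG}, namely the singular part $H(x,y)\ln|T(x)-T(y)|$ (whose $\nabla^\perp$ is a Calder\'on--Zygmund-type kernel, $T$ being a $C^1$ diffeomorphism) convolved against the bounded compactly supported $\omega_\varepsilon$, plus the $W^{1,\infty}_{loc}$ remainder $S_K$; the quantitative bounds already exploited in Lemma \ref{estimate vf} are of exactly the same flavour.

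Granting this, the Yudovich/DiPerna--Lions-type theory provides a unique flow $\Phi_t$ associated with $\dot X(t)=v(X(t),t)$, which is a homeomorphism of $\mathbb R^2$, is measure-preserving because $\mathrm{div}\,v=0$, satisfies $t\mapsto\Phi_t(x)\in C^1$ with $\partial_t\Phi_t(x)=v(\Phi_t(x),t)$, and transports the vorticity: $\omega_{i,\varepsilon}(\cdot,t)=\omega_{i,\varepsilon}(\cdot,0)\circ\Phi_t^{-1}$. Then I would change variables and differentiate. Writing
$$\int\alpha(x,t)\,\omega_{i,\varepsilon}(x,t)\,dx=\int\alpha(\Phi_t(y),t)\,\omega_{i,\varepsilon}(y,0)\,dy,$$
the integrand on the right is $C^1$ in $t$, and since $\omega_{i,\varepsilon}(\cdot,0)$ is bounded with compact support while $\Phi_t$ maps that support into a set bounded uniformly for $t$ in a fixed compact interval, the difference quotients are dominated by an integrable function near any given $t$, so one may differentiate under the integral sign to obtain
$$\frac{d}{dt}\int\alpha(\Phi_t(y),t)\,\omega_{i,\varepsilon}(y,0)\,dy=\int\Big(\partial_t\alpha(\Phi_t(y),t)+\nabla\alpha(\Phi_t(y),t)\cdot v(\Phi_t(y),t)\Big)\omega_{i,\varepsilon}(y,0)\,dy.$$
Changing variables back via $x=\Phi_t(y)$ (Jacobian $1$) yields precisely the claimed formula.

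The only genuinely delicate point is the low regularity of $v$ at the vortex cores: $v$ is merely log-Lipschitz, not Lipschitz, so one must appeal to the Yudovich-type well-posedness theory to obtain a well-defined measure-preserving Lagrangian flow and the representation $\omega_{i,\varepsilon}(\cdot,t)=\omega_{i,\varepsilon}(\cdot,0)\circ\Phi_t^{-1}$; everything after that is routine given $\alpha\in C^1$ and the $L^\infty$ bound and compact support of $\omega_{i,\varepsilon}$. If one prefers to stay Eulerian, an equivalent route is to prove the identity first for the mollification $\omega_{i,\varepsilon}\ast\eta_\delta$, which is $C^1$ in space and solves the transport equation up to a commutator error that vanishes in $L^1_{loc}$ as $\delta\to0$ (using $\mathrm{div}\,v=0$ to rewrite $v\cdot\nabla\omega=\mathrm{div}(v\omega)$ and integrating by parts against $\alpha$), and then pass to the limit $\delta\to0$; this is the spirit of the proof of Lemma 4.3 in \cite{DLM} invoked here.
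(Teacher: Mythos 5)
Your argument is correct and amounts to the standard Lagrangian proof of the transport identity; the paper itself does not supply a proof but simply cites Lemma 4.3 of \cite{DLM}, where essentially this weak/renormalized transport argument (measure-preserving flow for a log-Lipschitz divergence-free field, or equivalently the Eulerian mollification-plus-commutator route you sketch at the end) is what is used. The two delicate points you flag — that each component $\omega_{i,\varepsilon}$ is transported by the common velocity $v$ because the initial supports are disjoint and remain so, and that $v$ is only log-Lipschitz near the cores so that the flow comes from Osgood/Yudovich theory rather than Cauchy–Lipschitz — are indeed the only places where care is needed, and you handle them correctly.
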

\begin{proposition}\label{20}
    Let
    \begin{equation*}
        m^i_t(R):=\int_{B(B^{i,\varepsilon}(t),\frac{R}{|\ln \varepsilon|})^\complement}|\omega_{i,\varepsilon}(x,t)|dx
    \end{equation*}
    denote the amount of vorticity of the i-th helix outside the disk $B(B^{i,\varepsilon}(t),\frac{R}{|\ln \varepsilon|})$ at time $t$. Then, given $R>0$, for each $l\in\mathbb{R}$ there is $\tilde{T}_l\in(0,T]$ such that  for any $\varepsilon\in(0,\varepsilon_0)$ sufficiently small we have
    \begin{equation}\label{19}
        m^i_t(R)\leq\frac{1}{|\ln \varepsilon|^{1+b+l}}\quad \forall t\in[0,\tilde{T}_l\wedge T_\varepsilon]\quad\forall i=1,...,N.
    \end{equation}

\end{proposition}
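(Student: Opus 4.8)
The plan is to follow the classical strategy used for vortex rings (cf.\ \cite{BCM3}) and for helical vortices in \cite{DLM}, adapted to the transformed setting: control the growth of a suitable cut-off moment of the vorticity outside a disk centered at the center of vorticity $B^{i,\varepsilon}(t)$, using the transport equation for $\omega_{i,\varepsilon}$ together with the decomposition \eqref{decom v} of the velocity. First I would fix a smooth, radially nondecreasing cut-off $W_{R,\zeta}$ vanishing on $B(0,R)$ and equal to a fixed function (growing like $|z|^2$, say) outside $B(0,2R)$, and consider the quantity
$$
\mu_t^i := \int W_{R,\zeta}\big(DT(x_0)(x-B^{i,\varepsilon}(t))\big)\,|\omega_{i,\varepsilon}(x,t)|\,dx,
$$
which dominates $m_t^i(R')$ for a slightly larger radius $R'$. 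Applying Lemma \ref{13} with $\alpha(x,t)=W_{R,\zeta}(DT(x_0)(x-B^{i,\varepsilon}(t)))$, the time derivative splits into a term involving $\dot B^{i,\varepsilon}(t)$ (which is bounded by $O(1/|\ln\varepsilon|)$ using the conservation of the first moment and the velocity bounds) and a term $\int \nabla_x\alpha \cdot v\,\omega_{i,\varepsilon}$. Into the latter I substitute $v = v^i_K + v^i_L + v^i_S + F^i$ from \eqref{decom v}.

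The key point is that the ``bad'' self-interaction pieces are either small or cancel. The contributions of $v^i_S$ and $F^i$ are harmless by the bounds \eqref{estimate vs}, \eqref{14}: each is $O(1/|\ln\varepsilon|)$ pointwise, so it contributes at most $O(1/|\ln\varepsilon|)$ times $\mu_t^i$ plus lower order, giving a Gr\"onwall-type term. The term $v^i_L$ is handled using the logarithmic growth of $G_K$ together with the weak localization estimate \eqref{10} (Proposition \ref{11}) and the moment bound \eqref{15} (Lemma \ref{thm-moment}): on the bulk of the mass $G_K(x,y)$ is $O(|\ln\varepsilon|)$ in the relevant region, but one must be careful where $x$ is far from the core, where $G_K$ grows; this is absorbed because there $\nabla_x\alpha$ is controlled and the mass is tiny by the iterative hypothesis. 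The main term is $v^i_K$: here one writes $\nabla_x\alpha(x,t) = DT(x_0)^\top \nabla W_{R,\zeta}(DT(x_0)(x-B^{i,\varepsilon}))$ and exploits the near-antisymmetry of the kernel $H(x,y)DT(x)\frac{T(x)-T(y)}{|T(x)-T(y)|^2}$ under $x\leftrightarrow y$. Because $\nabla W$ is evaluated at $DT(x_0)(x-B^{i,\varepsilon})$ and $T(x)-T(y)\approx DT(x_0)(x-y)$ to leading order, the transformed variables make $(DT(x)\,(T(x)-T(y)))^\perp$ align, to leading order, with $DT(x_0)^*(DT(x_0)(x-y))^\perp$, and the symmetrized integrand $\tfrac12[\nabla\alpha(x)-\nabla\alpha(y)]\cdot(\text{kernel})$ picks up a factor $|x-y|$ that beats the $1/|x-y|$ singularity; the remaining error terms from replacing $DT(x)$ by $DT(x_0)$ and $T(x)-T(y)$ by its linearization are $O(1/|\ln\varepsilon|)$ by the smoothness of $T$ and the support constraint $|x|\le (\overline d+\rho)/|\ln\varepsilon|+|x_0|$. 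This is exactly the place where the transformed cut-off $W_{R,\zeta}(DT(x_0)\,\cdot\,)$ rather than $W_{R,\zeta}(\cdot)$ is essential — with the untransformed cut-off the cancellation fails because $\mathcal L_K$ is anisotropic.

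Collecting everything yields a differential inequality of the form $\tfrac{d}{dt}\mu_t^i \le \tfrac{C}{|\ln\varepsilon|}\mu_t^i + \tfrac{C}{|\ln\varepsilon|}\sum_j \mu_t^j + \tfrac{C\ln|\ln\varepsilon|}{|\ln\varepsilon|^{4}}$ valid on $[0,T_\varepsilon]$, with initial datum $\mu_0^i$ exponentially small (in fact $\mu_0^i=0$ for $R$ fixed and $\varepsilon$ small, since the initial support has radius $\varepsilon$). Summing over $i$ and integrating via Gr\"onwall gives $\sum_i\mu_t^i \le C t\,\ln|\ln\varepsilon|/|\ln\varepsilon|^{4-o(1)}$ on a time interval $[0,\tilde T_l]$ whose length can be chosen depending only on $R$ and $l$ (not on $\varepsilon$), which in turn bounds $m_t^i(R)\le |\ln\varepsilon|^{-(2+l)}$ for small $\varepsilon$. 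The hard part will be the careful bookkeeping in the $v^i_K$ term: one must extract the genuine factor of $|x-y|$ from the symmetrized kernel uniformly over the support, handle the region where $x$ lies outside the core (where the naive pointwise bounds on $v^i_K$ degrade), and keep every error strictly $o(1/|\ln\varepsilon|^{2})$ relative to the target, all while tracking the dependence of constants on $\rho$ and on the dynamical-system data so that the resulting $\tilde T_l$ is genuinely $\varepsilon$-independent.
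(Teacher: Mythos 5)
Your outline correctly identifies the main structural ingredients that the paper also uses — evolving a cut-off functional via Lemma~\ref{13}, substituting the decomposition \eqref{decom v}, and exploiting the near-antisymmetry of the $v^i_K$ kernel after applying the transformed cut-off $W_{R,\zeta}(DT(x_0)\cdot)$ — but the final step of your argument does not deliver the stated conclusion, and the mechanism you are missing is precisely what makes the proposition work.

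The gap is this. A single Gr\"onwall estimate of the form
\[
\frac{d}{dt}\mu_t^i \;\le\; \frac{C}{|\ln\varepsilon|}\,\mu_t^i \;+\; \frac{C\ln|\ln\varepsilon|}{|\ln\varepsilon|^{4}}
\]
integrated over an $\varepsilon$-independent interval $[0,\tilde T_l]$ yields at best $\mu_t^i=O(\tilde T_l\,\ln|\ln\varepsilon|\,/|\ln\varepsilon|^{4})$, i.e.\ a \emph{fixed} negative power of $|\ln\varepsilon|$. That suffices to prove \eqref{19} only for $l<2$; the proposition demands the bound $|\ln\varepsilon|^{-(2+l)}$ for \emph{every} $l>0$, with a time $\tilde T_l>0$ that does not shrink to zero with $\varepsilon$. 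No closed Gr\"onwall inequality with a fixed cut-off radius can produce a power of $|\ln\varepsilon|^{-1}$ that grows with $l$ without forcing $\tilde T_l\to 0$. The paper's proof instead derives a \emph{non-closed} inequality
\[
\frac{d}{dt}\mu_t(R,\zeta)\;\le\; A_\varepsilon(R,\zeta)\,\mu_t(R-\zeta,\zeta),
\]
in which the right-hand side involves the functional at a strictly \emph{smaller} radius $R-\zeta$. It then iterates this inequality $n=\lfloor\ln|\ln\varepsilon|\rfloor$ times, with $\zeta=R/(2n+2)$, descending from $R_0=R-\zeta$ to $R_n=R/2$. Because the initial vorticity is supported in a ball of radius $\varepsilon$, the first $n$ terms of the iterated Duhamel expansion vanish exactly, leaving only the tail
\[
\mu_t(R-\zeta,\zeta)\;\le\;\frac{A_\varepsilon(R,\zeta)^{n+1}}{n!}\int_0^t (t-s)^n\,\mu_s(R_{n+1},\zeta)\,ds,
\]
which, after inserting the crude a priori Chebyshev bound $\mu_s(R/2,\zeta)\lesssim |\ln\varepsilon|^{-(2+\sigma)}$ from Lemma~\ref{thm-moment}, produces a factor $\bigl(eC_A(\rho^{-2}+R^{-1})t\bigr)^{n+1}$. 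Choosing $\tilde T_l$ so that the base of this power is $e^{-(l+1)}$ turns the factor into $(|\ln\varepsilon|^{-1})^{l+1}$ because $n+1\ge\ln|\ln\varepsilon|$, and this is what upgrades the fixed-power estimate to an arbitrary power $l$. Your proposal simply has no analogue of this radius-shrinking iteration; the combinatorial $\tfrac{1}{(n+1)!}$ gain and the exact vanishing of all initial-data contributions at every stage are the actual engine of the proposition. Note also that the paper's coefficient $A_\varepsilon(R,\zeta)\sim n/R$ actually \emph{diverges} as $\varepsilon\to 0$ (since $\zeta\sim R/n$), which is harmless only because of the factorial suppression — another sign that a single Gr\"onwall step cannot replace the iteration. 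A secondary remark: the paper uses a bounded bump cut-off ($1$ on $B(0,R/|\ln\varepsilon|)$, $0$ outside $B(0,(R+\zeta)/|\ln\varepsilon|)$), not a quadratically growing one; the bounded choice is what makes the functional $\mu_t(R,\zeta)$ a genuine (mollified) mass outside a disk, squeezing between $\tilde m^i_t(R)$ and $\mu_t(R-\zeta,\zeta)$, which is exactly the structure needed for the telescoping iteration.
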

\begin{proof}
    Denote $\mathcal{A}_R=\left[B^{i,\varepsilon}(t)+DT(x_0)^{-1}B(0,\frac{R}{|\ln\varepsilon|})\right]$, and
    \begin{equation*}
        \tilde{m}^i_t(R):=\int_{\mathcal{A}_R^\complement}|\omega_{i,\varepsilon}(x,t)|dx.
    \end{equation*}
Then we have $m^i_t(C_0'R)\leq \tilde{m}^i_t(R)\leq m^i_t(c_0'R)$.

Without loss of generality, we may assume that $\omega_{i,\varepsilon}(t)\geq0$ and hence   $\gamma_i=|\gamma_i|$ and
    \begin{equation*}
        m^i_t(R):=\int_{B(B^{i,\varepsilon}(t),\frac{R}{|\ln \varepsilon|})^\complement}\omega_{i,\varepsilon}(x,t)dx.
    \end{equation*}
    Let $\zeta,R$  be two positive parameters to be fixed later such that  $R\geq 2\zeta$. Take a  non-negative smooth cut-off function $ W_{R,\zeta} :  \mathbb{R}^2 \mapsto \mathbb R$  depending only on $|x|$ satisfying
    \begin{equation}
        W_{R,\zeta}(x)=\begin{cases}
1 \quad &if\quad|x|\leq\frac{R}{|\ln \varepsilon|},\\
0\quad&if\quad|x|\geq\frac{R+\zeta}{|\ln \varepsilon|}.
\end{cases}
    \end{equation}
Then,  for some $C_W>0$, it holds that
    \begin{equation}\label{4-7}
    	\begin{cases}
        |\nabla W_{R,\zeta}(x)|=\left|\eta_\zeta(|x|)\frac{x}{|x|}\right|<\frac{|\ln \varepsilon|C_W}{\zeta},&\\
        |\nabla W_{R,\zeta}(x)-\nabla W_{R,\zeta}(x')|<\frac{|\ln \varepsilon|^2C_W}{\zeta^2}|x-x'|,&\\
        \end{cases}
    \end{equation}
where by the definition of  $W_{R,\zeta}$, $\eta_\zeta$ satisfies
    \begin{equation*}
    	\begin{cases}
         \eta_{\zeta}(s)=0, &\text{if}\ \,s\in (0,\frac{R}{|\ln \varepsilon |})\bigcup (\frac{R+\zeta}{|\ln \varepsilon |},+\infty ),\\
         0\leq\eta_{\zeta}(s)\leq \frac{|\ln \varepsilon|C_W}{\zeta}, &\text{if}\ \,s\in [\frac{R}{|\ln \varepsilon |},\frac{R+\zeta}{|\ln \varepsilon |}].\\
        \end{cases}
    \end{equation*}

    It can be seen that the quantity
    \begin{equation}
        \mu_t(R,\zeta)=\int[1-W_{R,\zeta}(DT(x_0)(x-B^{i,\varepsilon}(t)))]\omega_{i,\varepsilon}(x,t)dx
    \end{equation}
    is a mollified version of $m^i_t$, satisfying
    \begin{equation}
        \mu_t(R,\zeta)\leq \tilde{m}^i_t(R)\leq \mu_t(R-\zeta,\zeta).
    \end{equation}

    Next, we study the variation in time of $\mu_t(R,\zeta)$ by applying Lemma \ref{13} with test function $\alpha(x,t)=1-W_{R,\zeta}(x-B^{i,\varepsilon}(t))$, getting
    \begin{equation*}
        \frac{d}{dt}\mu_t(R,\zeta)=-\frac{1}{|\ln\varepsilon|^{1-b}}\int\nabla W_{R,\zeta}(DT(x_0)(x-B^{i,\varepsilon}(t)))\cdot DT(x_0)[v(x,t)-\dot{B}^{i,\varepsilon}(t)] \omega_{i,\varepsilon}(x,t)dx.
    \end{equation*}
    Applying again Lemma \ref{13} with test function $\alpha(x,t)=x$, we get
    \begin{equation}
       \dot{B}^{i,\varepsilon}(t)=\frac{1}{\gamma_i} \int v(y,t)\omega_{i,\varepsilon}(y,t)dy.
    \end{equation}
    By using the decomposition of velocity $v$ in \eqref{decom v},  we conclude that
    \begin{equation}
        \frac{d}{dt}\mu_t(R,\zeta)=-\frac{1}{|\ln\varepsilon|^{1-b}}(A_1+A_2+A_3),
    \end{equation}
    where
    \begin{equation*}
        \begin{split}
            A_1&=\int\nabla W_{R,\zeta}(DT(x_0)(x-B^{i,\varepsilon}(t)))\cdot DT(x_0) v^i_S(x,t)\omega_{i,\varepsilon}(x,t)dx\\
            &-\frac{1}{\gamma_i}\int\nabla W_{R,\zeta}(DT(x_0)(x-B^{i,\varepsilon}(t)))\cdot DT(x_0)\int v^i_S(y,t)\omega_{i,\varepsilon}(y,t)\omega_{i,\varepsilon}(x,t)dydx\\
            &-\int\nabla W_{R,\zeta}(DT(x_0)(x-B^{i,\varepsilon}(t)))\cdot DT(x_0) F^i(x,t)\omega_{i,\varepsilon}(x,t)dx\\
            &-\frac{1}{\gamma_i}\int\nabla W_{R,\zeta}(DT(x_0)(x-B^{i,\varepsilon}(t)))\cdot DT(x_0)\int F^i(y,t)\omega_{i,\varepsilon}(y,t)\omega_{i,\varepsilon}(x,t)dydx,\\
            A_2&=\frac{1}{\gamma_i}\int\nabla W_{R,\zeta}(DT(x_0)(x-B^{i,\varepsilon}(t)))\cdot DT(x_0)\int(v^i_K(x,t)-v^i_K(y,t))\omega_{i,\varepsilon}(y,t)\omega_{i,\varepsilon}(x,t)dydx,\\
            A_3&=\frac{1}{\gamma_i}\int\nabla W_{R,\zeta}(DT(x_0)(x-B^{i,\varepsilon}(t)))\cdot DT(x_0)\int(v^i_L(x,t)-v^i_L(y,t))\omega_{i,\varepsilon}(y,t)\omega_{i,\varepsilon}(x,t)dydx.
        \end{split}
    \end{equation*}

    We further decompose $A_1$ as $A_1=A^1_1+A^2_1$ with
       \begin{align*}
    		A^1_1&=\int\nabla W_{R,\zeta}(DT(x_0)(x-B^{i,\varepsilon}(t)))\cdot DT(x_0) v^i_S(x,t)\omega_{i,\varepsilon}(x,t)dx\\
    		&-\frac{1}{\gamma_i}\int\nabla W_{R,\zeta}(DT(x_0)(x-B^{i,\varepsilon}(t)))\cdot DT(x_0)\int v^i_S(y,t)\omega_{i,\varepsilon}(y,t)\omega_{i,\varepsilon}(x,t)dydx,\\
    		A^2_1=&-\int\nabla W_{R,\zeta}(DT(x_0)(x-B^{i,\varepsilon}(t)))\cdot DT(x_0) F^i(x,t)\omega_{i,\varepsilon}(x,t)dx\\
    		&-\frac{1}{\gamma_i}\int\nabla W_{R,\zeta}(DT(x_0)(x-B^{i,\varepsilon}(t)))\cdot DT(x_0)\int F^i(y,t)\omega_{i,\varepsilon}(y,t)\omega_{i,\varepsilon}(x,t)dydx.
    \end{align*}
    For $A^1_1$, by using \eqref{estimate vs} and \eqref{4-7}, we have
    \begin{equation}\label{a11}
    \begin{split}
        |A^1_1|&\leq \frac{|\ln \varepsilon|C_W}{\zeta}\int_{\mathcal{A}_R^\complement}C_S\overline{\gamma}\omega_{i,\varepsilon}(x,t)dx+\frac{|\ln \varepsilon|C_W}{\zeta}\cdot C_S\overline{\gamma}\frac{1}{\gamma_i}\int\omega_{i,\varepsilon}(y,t)dy\int_{\mathcal{A}_R^\complement}\omega_{i,\varepsilon}(x,t)dx \\
        &\leq \frac{C_W'}{\zeta|\ln \varepsilon|^b}\tilde{m}^i_t(R).
    \end{split}
    \end{equation}
    We next use inequalities (\ref{15}) and (\ref{14}) to estimate  $A^2_1$.
    \begin{equation}\label{a12}
        \begin{split}
            |A^2_1|&=\left|\frac{1}{\gamma_i}\int\nabla W_{R,\zeta}(DT(x_0)(x-B^{i,\varepsilon}(t)))\cdot DT(x_0)\int(F^i(x,t)-F^i(y,t))\omega_{i,\varepsilon}(y,t)\omega_{i,\varepsilon}(x,t)dydx\right|\\
            &\leq\frac{|\ln \varepsilon|^{2-b} C_WC_F a}{\gamma_i \zeta\rho^2}\int_{\mathcal{A}_{R+\zeta}\backslash \mathcal{A}_R}\omega_{i,\varepsilon}(x,t)\int_{\mathcal{A}_R}\omega_{i,\varepsilon}(y,t)|x-y|dydx\\
            &+\frac{2|\ln\varepsilon|^{1-b} C_WC_F a  }{\gamma_i \zeta\rho}\int_{\mathcal{A}_R^\complement}\omega_{i,\varepsilon}(x,t)dx\int_{\mathcal{A}_R^\complement}\omega_{i,\varepsilon}(y,t)dy\\
            &\leq\frac{|\ln\varepsilon|^{1-b}C_WC_F a}{c_0'\zeta\rho^2}(2R+\zeta)  \tilde{m}^i_t(R)+\frac{2|\ln\varepsilon|^{1-b}C_WC_F a}{ a_i c_0'^2 R^2\zeta\rho}|\ln \varepsilon|^{3+b}J_{i,\varepsilon}(t)\tilde{m}^i_t(R)\\
            &\leq\frac{3|\ln\varepsilon|^{1-b}RC_WC_Fa}{c_0'\zeta\rho^2}\tilde{m}^i_t(R)+\frac{2|\ln\varepsilon|^{1-b}C_WC_Fa}{a_ic_0'^2R^2\zeta\rho|\ln \varepsilon|^\sigma}\tilde{m}^i_t(R).
        \end{split}
    \end{equation}Here, in the above calculations,  we have used   Chebyshev's inequality to conclude
    \begin{equation}\label{Cheb}
    	\int_{\mathcal{A}_R^\complement}\omega_{i,\varepsilon}(y,t)dy\leq \frac{|\ln\ep|^2}{c_0'^2R^2} J_{i, \ep}(t).
    \end{equation}

    For $A_2$, we first consider the term
    $$A^1_2:= \frac{1}{|\ln\varepsilon|^{1-b}\gamma_i}\int\nabla W_{R,\zeta}(DT(x_0)(x-B^{i,\varepsilon}(t)))\cdot DT(x_0)\int v^i_K(x,t) \omega_{i,\varepsilon}(y,t)\omega_{i,\varepsilon}(x,t)dydx.$$
    By the definition of $v_K^i$ in \eqref{decom v} and symmetry properties, we obtain
    \begin{align*}
            A^1_2=&\frac{1}{|\ln\varepsilon|^{1-b}}\iint\eta_\zeta(|DT(x_0)(x-B^{i,\varepsilon}(t))|)\frac{DT(x_0)(x-B^{i,\varepsilon}(t))}{|DT(x_0)(x-B^{i,\varepsilon}(t))|}\cdot DT(x_0)\\&\left(H(x,z)DT(x)^*\frac{(T(x)-T(z))^\perp}{|T(x)-T(z)|^2}\right)\omega_{i,\varepsilon}(z,t)\omega_{i,\varepsilon}(x,t)dzdx\\
            =&\frac{1}{2|\ln\varepsilon|^{1-b}}\iint DT(x_0)\left[\nabla W_{R,\zeta}(DT(x_0)(x-B^{i,\varepsilon}(t)))DT(x)^*-\nabla W_{R,\zeta}(DT(x_0)(z-B^{i,\varepsilon}(t)))DT(z)^*\right]\\
            &H(x,z)\frac{(T(x)-T(z))^\perp}{|T(x)-T(z)|^2}\omega_{i,\varepsilon}(z,t)\omega_{i,\varepsilon}(x,t)dzdx\\
            =&\frac{1}{2|\ln\varepsilon|^{1-b}}\iint f(x,z,t)dzdx,
    \end{align*}
    where $f(x,z,t)=f(z,x,t)$ and $f(x,z,t)=0$, $\forall (x,z)\in \mathcal{A}_R\times\mathcal{A}_R $.
    Therefore, we decompose the integral into three parts as follows:
    \begin{equation*}
    \begin{array}{ll}
         \iint f(x,z,t)dzdx=2\iint_{\mathcal{A}_R^\complement\times \mathcal{A}_{R/2}}f(x,z,t)dzdx
         +2\iint_{\mathcal{A}_R^\complement\times \mathcal{A}_{R/2}^\complement}f(x,z,t)dzdx&\\
         \qquad\qquad\qquad\qquad\quad -\iint_{\mathcal{A}_R\times\mathcal{A}_R}f(x,z,t)dzdx.
   \end{array}
    \end{equation*}
    We infer from \eqref{4-7} that
    \begin{equation}
        \left|2\iint_{\mathcal{A}_R^\complement\times \mathcal{A}_{R/2}}f(x,z,t)dzdx\right|\\
        \leq\frac{C_W|\ln\varepsilon|}{2h}\tilde{m}^i_t(R)\sup_{x\in \mathcal{A}_R^\complement}|\tilde{H}_1(x)|,
    \end{equation}
where
   \begin{align*}
        \tilde{H}_1(x)&=\frac{DT(x_0)(x-B^{i,\varepsilon}(t))}{|DT(x_0)(x-B^{i,\varepsilon}(t))|}  \cdot\int_{\mathcal{A}_{R/2}} H(x,y)DT(x_0)DT(x)^*\frac{(T(x)-T(y))^\perp}{|T(x)-T(y)|^2}\omega_{i,\varepsilon}(y,t)dy\\
        &=\frac{DT(x_0)(x-B^{i,\varepsilon}(t))}{|DT(x_0)(x-B^{i,\varepsilon}(t))|}\\
        &\qquad  \cdot\int_{\mathcal{A}_{R/2}} H(x_0,x_0)DT(x_0)DT(x_0)^*\frac{(DT(x_0)(x-y))^\perp}{|DT(x_0)(x-y)|^2}\omega_{i,\varepsilon}(y,t)dy+O\left(\frac{1}{|\ln \varepsilon|^{1+b}}\right).
\end{align*}
    Here we have used the fact $|x_0-y|\leq  \frac{\overline{d}+\rho}{|\ln \varepsilon|}$ for any $y\in \Lambda_{i,\varepsilon}(t)$ and any $t\in[0,T_\varepsilon)$.

    Denote $z'=z-B^{i,\varepsilon}(t)$ for a point $z\in \mathbb R^2$. Also denote $\tilde {z}'=DT(x_0)z'$ and $\overline{\omega}_{i,\varepsilon}(\tilde{y}',t)=\omega_{i,\varepsilon}(y,t)$. Notice that $DT(x_0)DT(x_0)^*$ equals to a constant multiplied by the identity matrix. Then the above identity can be rewritten as follows:
    \begin{equation}
        \tilde{H}_1(x)=C_{K,1}\int_{|\tilde{y}'|\leq\frac{R}{2|\ln \varepsilon|}}\frac{-\tilde x'\cdot \tilde y'^\perp}{|\tilde x'||\tilde x'-\tilde y' |^2}\overline{\omega}_{i,\varepsilon}(\tilde{y}',t)d\tilde{y}'+O\left(\frac{1}{|\ln \varepsilon|^{1+b}}\right).
    \end{equation}
    Using the identity $\int \tilde y' \overline{\omega}_{i,\varepsilon}(\tilde{y}',t)d\tilde{y}'=0$ due to the definition of $B^{i,\varepsilon}(t)$, we further decompose $\tilde{H}_1(x)$ as follows:
    \begin{equation}
        \tilde{H}_1(x)=\tilde{H}_1'(x)-\tilde{H}_1''(x)+O\left(\frac{1}{|\ln \varepsilon|^{1+b}}\right),
    \end{equation}
    where
    \begin{equation*}
        \tilde{H}_1'(x)=-C_{K,1}\int_{|\tilde{y}'|\leq\frac{R}{2|\ln \varepsilon|}}\frac{(\tilde x'\cdot \tilde y'^\perp) \tilde y'\cdot(2\tilde  x'-\tilde y')}{|\tilde x'||\tilde x'-\tilde y' |^2|\tilde x'|^2}\overline{\omega}_{i,\varepsilon}(\tilde{y}',t)d\tilde{y}',
    \end{equation*}
    \begin{equation*}
        \tilde{H}''_1(x)=-C_{K,1}\int_{|\tilde{y}'|\geq\frac{R}{2|\ln \varepsilon|}}\frac{\tilde x_1'\cdot \tilde y'^\perp}{|\tilde x_1'|^3}\overline{\omega}_{i,\varepsilon}(\tilde{y}',t)d\tilde{y}'.
    \end{equation*}
    Clearly,  one has  $|\tilde x_1'|\geq\frac{R}{|\ln \varepsilon|}$, and  for $|\tilde{y}'|\leq\frac{R}{2|\ln \varepsilon|}$, there hold  $|2\tilde x_1'-\tilde y'|\leq |\tilde x_1'-\tilde y'|+|\tilde x_1'|$  and $|\tilde x_1'-\tilde y'|\geq\frac{R}{2|\ln \varepsilon|}$. Therefore, we obtain
    \begin{equation}
        \begin{split}
            |\tilde{H}'_1(x)|\leq  C'_{K,1}\frac{|\ln \varepsilon|^3}{R^3}\int_{|\tilde{y}'|\leq\frac{R}{2|\ln \varepsilon|}}|\tilde y'|^2\overline{\omega}_{i,\varepsilon}(\tilde{y}',t)d\tilde{y}' \leq C'_{K,1}\frac{|\ln \varepsilon|^3}{R^3}J_{i,\varepsilon}(t).
        \end{split}
    \end{equation}
    It follows easily from   Chebyshev's inequality that
    \begin{equation}
        \begin{split}
            |\tilde{H}''_1(x)| \leq\tilde{C}_H\frac{1}{|\tilde x_1'|^2}\int_{|\tilde{y}'|\geq\frac{R}{2|\ln \varepsilon|}}|\tilde y'|\overline{\omega}_{i,\varepsilon}(\tilde{y}',t)d\tilde{y}' \leq\tilde{C}_{H,1}\frac{|\ln \varepsilon|^3}{R^3}J_{i,\varepsilon}(t).
        \end{split}
    \end{equation}
    Then by  (\ref{15}), we have
    \begin{equation}
        |\tilde{H}_1(x)|\leq C'''_{K,1}\left(\frac{1}{R^3|\ln \varepsilon|^{b+\sigma}}+\frac{1}{|\ln \varepsilon|^{1+b}}\right).
    \end{equation}
    On the other hand, \eqref{4-7} yields
    \begin{equation*}
    \begin{split}
        &|DT(x_0)\nabla W_{R,\zeta}(DT(x_0)(x-B^{i,\varepsilon}(t)))DT(x)^*-DT(x_0)\nabla W_{R,\zeta}(DT(x_0)(z-B^{i,\varepsilon}(t)))DT(z)^*|\\\leq &C''_K\left(\frac{|\ln \varepsilon|^2}{\zeta ^2}+\frac{|\ln \varepsilon|}{\zeta }\right)|x-y|.
    \end{split}
    \end{equation*}
    Combining  (\ref{15}) and \eqref{Cheb} (with $R$ replaced by $R/2$), we get
    \begin{equation}\label{16}
        \begin{split}
          &\left|\iint_{\mathcal{A}_R^\complement\times \mathcal{A}_{R/2}^\complement}f(x,z,t)dzdx \right|+\left|\iint_{\mathcal{A}_R^\complement\times\mathcal{A}_R^\complement}f(x,z,t)dzdx\right| \\
          \leq&C \left(\frac{|\ln \varepsilon|^2}{\zeta ^2}+\frac{|\ln \varepsilon|}{\zeta }\right) \int_{\mathcal{A}_{R/2}^\complement} \omega_{i,\varepsilon}(x,t)dx\cdot \tilde{m}^i_t(R)\\
          \leq& C \left(\frac{|\ln \varepsilon|^2}{\zeta^2}+\frac{|\ln \varepsilon|}{\zeta}\right)\frac{4|\ln \varepsilon|^2}{c_0'^2R^2}\frac{1}{|\ln \varepsilon|^{\sigma+3+b}}\tilde{m}^i_t(R)\\
          =&C'_K\left(\frac{1}{\zeta^2}+\frac{1}{\zeta|\ln \varepsilon|}\right)\frac{1}{R^2}\cdot \frac{1}{|\ln \varepsilon|^{\sigma+b-1}}\tilde{m}^i_t(R).
        \end{split}
    \end{equation}
    Now, we turn to the remaining term in $A_2$, which is given by
    \begin{align*}
    		A^2_2&:=\frac{1}{\gamma_i}\int\nabla W_{R,\zeta}(DT(x_0)(x-B^{i,\varepsilon}(t)))\cdot DT(x_0)\int(-v^i_K(y,t))\omega_{i,\varepsilon}(y,t)\omega_{i,\varepsilon}(x,t)dydx\\
    		&=-\frac{1}{\gamma_i} \iiint\nabla W_{R,\zeta}(DT(x_0)(x-B^{i,\varepsilon}(t)))\omega _{i,\varepsilon}(x,t) H(z,y)DT(x_0)\\
    		&\qquad \qquad \qquad \times DT(y)^*\frac{(T(z)-T(y))^\perp}{|T(z)-T(y)|^2}\omega_{i,\varepsilon}(y,t)\omega_{i,\varepsilon}(z,t)dxdydz.
    \end{align*}
    It follows immediately that
    \begin{equation*}
        \left|\int\nabla W_{R,\zeta}(DT(x_0)(x-B^{i,\varepsilon}(t)))\omega _{i,\varepsilon}(x,t)dx\right|\leq \frac{|\ln \varepsilon|C_W}{\zeta}\tilde{m}^i_t(R),
    \end{equation*}
    \begin{equation*}
        \begin{split}
           &\left| \iint H(z,y)DT(x_0)DT(y)^*\frac{(T(z)-T(y))^\perp}{|T(z)-T(y)|^2}\omega_{i,\varepsilon}(y,t)\omega_{i,\varepsilon}(z,t)dydz\right|\\
           =&\left|\frac{1}{2}\iint H(z,y)DT(x_0)(DT(y)^*-DT(z)^*)\frac{(T(z)-T(y))^\perp}{|T(z)-T(y)|^2}\omega_{i,\varepsilon}(y,t)\omega_{i,\varepsilon}(z,t)dydz\right|\\
           \leq&C''_K\gamma^2_i.
        \end{split}
    \end{equation*}
    Therefore, we obtain
    \begin{equation}
        |A^2_2|\leq\frac{C''_KC_Wa_i}{\zeta |\ln \varepsilon|^b}\tilde{m}^i_t(R).
    \end{equation}

    For the term $A_3$, we write
    \begin{equation*}
        A_3=A^1_3+A^2_3,
    \end{equation*}
    with
    \begin{equation*}
        A^1_3=\int \nabla W_{R,\zeta}(DT(x_0)(x-B^{i,\varepsilon}(t)))\cdot DT(x_0)\frac{x^\perp}{2|X|^2}\int H(x,y)\ln |T(x)-T(y)|\omega_{i,\varepsilon}(x,t)\omega_{i,\varepsilon}(y,t)dydx.
    \end{equation*}
    Notice that
    \begin{equation*}
        \left|\frac{x^\perp}{2|X|^2}\right|\leq \frac{1}{|X_0|}\leq\frac{1}{h}.
    \end{equation*}
    Letting ${r}_{\ep, i}=\sqrt{\frac{|\gamma_i|}{M\pi}} \ep$, by using $|T(x)-T(y)|\leq C|x-y|$ and the rearrangement inequality, we obtain
    \begin{equation}\label{4-18}
        \begin{split}
            &\left|\int \ln |T(x)-T(y)|\omega_{i,\varepsilon}(y,t)dy\right|\\
            \leq&\left|\int \ln \frac{|T(x)-T(y)|}{|x-y|}\omega_{i,\varepsilon}(y,t)dy\right|+\left|\int \ln |x-y|\omega_{i,\varepsilon(y,t)}dy\right|\\
            \leq&C  \gamma_i+\left|\frac{M}{\varepsilon^2}\int^{{r}_{\ep, i}i}_02\pi r\ln rdr\right|
            \leq \frac{ C_L  }{|\ln \varepsilon|^b},
        \end{split}
    \end{equation}
    from which we get
    \begin{equation}\label{a13}
        |A^1_3|\leq\frac{2 C_W|\ln \varepsilon|}{\zeta h}\frac{ C_0C_L}{|\ln \varepsilon|^b}\tilde{m}^i_t(R)=\frac{|\ln\varepsilon|^{1-b}C''_L}{\zeta}\tilde{m}^i_t(R).
    \end{equation}
    Next we estimate $A^2_3$, which is given by
    \begin{equation*}
    \begin{split}
        A^2_3=&-\frac{1}{\gamma_i}\int\nabla W_{R,\zeta}(DT(x_0)(x-B^{i,\varepsilon}(t)))\omega _{i,\varepsilon}(x,t)dx\cdot DT(x_0)\\&\iint\frac{y^\perp}{2|Y|^2}H(y,z)\ln |T(y)-T(z)|\omega _{i,\varepsilon}(y,t)\omega _{i,\varepsilon}(z,t)dydz.
        \end{split}
    \end{equation*}
    Notice that
    \begin{equation*}
        \left|\int\nabla W_{R,\zeta}(DT(x_0)(x-B^{i,\varepsilon}(t)))\omega _{i,\varepsilon}(x,t)dx\right|\leq \frac{|\ln \varepsilon|C_W}{\zeta}\tilde{m}^i_t(R).
    \end{equation*}
    Using \eqref{4-18}, we have
    \begin{equation*}
        \begin{split}
            &\left|DT(x_0)\iint\frac{y^\perp}{2|Y|^2}H(y,z)\ln |T(y)-T(z)|\omega _{i,\varepsilon}(y,t)\omega _{i,\varepsilon}(z,t)dydz\right|\\
            \leq&\gamma_i\frac{C }{h}\sup_{z\in \Lambda_{i,\varepsilon}(t)}\left|\int \ln |T(y)-T(z)|\omega_{i,\varepsilon}(y,t)dy\right|
            \leq \frac{C'_L a_i}{|\ln \varepsilon|^{1+2b}},
        \end{split}
    \end{equation*}
    which yields
    \begin{equation}\label{a32}
        |A^2_3|\leq\frac{|\ln\varepsilon|^{1-b}\overline{C}_L}{\zeta}\tilde{m}^i_t(R).
    \end{equation}

    In conclusion, by summing all the estimates   above, we arrive at
    \begin{equation}\label{17}
        \frac{d}{dt}\mu_t(R,\zeta)\leq A_\varepsilon(R,\zeta)\tilde{m}^i_t(R)\leq A_\varepsilon(R,\zeta)\mu_t(R-\zeta,\zeta),
    \end{equation}
    where for $\varepsilon\in(0,\varepsilon_\sigma)$   with $\sigma\in(0,1)$ and $\varepsilon_\sigma$ as in Lemma \ref{thm-moment}, there exists $\tilde{C}>0$ such that  the coefficient can be chosen as
    \begin{equation}\label{18}
        A_\varepsilon(R,\zeta)=\frac{\tilde{C}}{\zeta}\left(1+\frac{R}{\rho^2}+\frac{1}{R^3|\ln\varepsilon|^\sigma}+\frac{1}{|\ln \varepsilon|}+\frac{1}{|\ln \varepsilon|^2}+\frac{1}{R^2|\ln \varepsilon|^\sigma}+\frac{1}{\zeta R^2|\ln \varepsilon|^\sigma}\right).
    \end{equation}
    Therefore, by  (\ref{17}), we get
    \begin{equation}\label{4-23}
        \mu_t(R,\zeta)\leq \mu_0(R,\zeta)+A_\varepsilon(R,\zeta)\int^t_0\mu_s(R-\zeta,  \zeta)ds\quad \forall t\in[0,T_\varepsilon].
    \end{equation}
    Let $\lfloor z\rfloor$ denote  the integer part of the positive number $z$ and take $n=\lfloor \ln |\ln \varepsilon\rfloor|$. Let $R_j=R-(j+1)\zeta$ and iterate the   inequality \eqref{4-23} $n$ times, from $R_0=R-\zeta$ to $R_n=R-(n+1)\zeta=R/2$. Since $\zeta=R/(2n+2)$ and $R_j\in [R/2,R]$, from the explicit expression  (\ref{18}), it is readily seen that if $\varepsilon$ is sufficiently small then there exists $C_A=C_A(|x_0|,a_i,\overline{d},h)>0$ such that if $\varepsilon$ small enough we have
    \begin{equation}
        A_\varepsilon(R_j,\zeta)\leq \frac{nC_A}{R}+\frac{nC_A}{\rho^2}.
    \end{equation}
    Therefore, for any $t\in[0,T_\varepsilon]$,
    \begin{equation}
        \mu_t(R-\zeta,\zeta)\leq \mu_0(R-\zeta,\zeta)+\sum^n_{j=1}\mu_0(R_j,\zeta)\frac{(A_\varepsilon(R,\zeta)t)^j}{j!}+\frac{A_\varepsilon(R,\zeta)^{n+1}}{n!}\int^t_0(t-s)^n\mu_s(R_{n+1},\zeta)ds.
    \end{equation}
    Since if $\varepsilon$ is sufficiently small $\Lambda_{i,\varepsilon}(0)\subset B(x_0+{P_i^0}/{|\ln \varepsilon|},\varepsilon)$,  then $\mu_0(R_j,\zeta)=0$ for any $j=0,\ldots,n$. Therefore
    \begin{equation*}
        \mu_t(R-\zeta,\zeta)\leq \frac{A_\varepsilon(R,\zeta)^{n+1}}{n!}\int^t_0(t-s)^n\mu_s(R_{n+1},\zeta)ds,
    \end{equation*}
    which combining
    \begin{equation*}
        \mu_s(R_{n+1},\zeta)\leq \tilde{m}^i_s(R_{n+1})\leq m^i_s(c_0'(R/2-\zeta))\leq \frac{|\ln \varepsilon|^2}{c_0'^2(R/2-\zeta)^2}J_{i,\varepsilon}(s)\leq\frac{9}{c_0'^2R^2|\ln \varepsilon|^{\sigma+1+b}},
    \end{equation*}
    leads to
    \begin{equation}
        \tilde{m}^i_t(R)\leq \mu_t(R-\zeta,\zeta)\leq \frac{9}{c_0'^2R^2|\ln \varepsilon|^{\sigma+1+b}}\frac{\left(n\cdot \frac{A_\varepsilon(R,\zeta)}{n}t\right)^{n+1}}{(n+1)!}\leq \frac{9}{c_0'^2R^2|\ln \varepsilon|^{\sigma+1+b}}\left(eC_A(\frac{1}{\rho^2}+\frac{1}{R})t\right)^{n+1}.
    \end{equation}
    Recalling that $m^i_t(R)\leq \tilde{m}^i_t(R/C_0')$, choosing $\tilde{T}_l=\frac{R\rho^2}{C_A(C_0'\rho^2+R)}e^{-(l+1)}\wedge T$, then we obtain  (\ref{19}).

\end{proof}

Having established preliminary  estimates in Proposition \ref{20}, we can further improve them as follows:
\begin{proposition}\label{23}
    Let $m^i_t(R)$ be as in Proposition \ref{20}. Then, given $R>0$, for each $l\in\mathbb{R}$ there is $\overline{T}_l\in (0,T]$ such that
  for any $\varepsilon\in(0,\varepsilon_0)$ sufficiently small,
    \begin{equation}\label{21}
        m^i_t(R)\leq\varepsilon^l\quad \forall t\in[0,\overline{T}_l\wedge T_\varepsilon].
    \end{equation}

\end{proposition}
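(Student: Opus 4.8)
The proof follows the iteration scheme of Proposition \ref{20}, now bootstrapped to the exponential scale. The two new ingredients are: (i) we feed the conclusion \eqref{19} of Proposition \ref{20}, used with a large auxiliary exponent, back into the estimates of $A_1,A_2,A_3$; and (ii) we iterate the resulting differential inequality of order $|\ln\ep|$ times, rather than $\lfloor\ln|\ln\ep|\rfloor$ times.

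Fix $l'\ge 4$ and set $R_*:=c_0'R/4$. By Proposition \ref{20} applied with radius $R_*$ and exponent $l'$ there is $\tilde{T}_{l'}\in(0,T]$ so that, for $\ep$ small, $m^i_t(R'')\le m^i_t(R_*)\le|\ln\ep|^{-2-l'}$ for every $R''\ge R_*$ (by monotonicity of $m^i_t$ in the radius), every $i$ and every $t\in[0,\tilde{T}_{l'}\wedge T_\ep]$; in particular $\tilde m^i_t(R')\le|\ln\ep|^{-2-l'}$ for $R'\ge R/2$. Splitting the integral defining $J_{i,\ep}(t)$ at radius $R_*/|\ln\ep|$ and using the support localization of $\omega_{i,\ep}(\cdot,t)$ further gives $J_{i,\ep}(t)\lesssim R^2|\ln\ep|^{-4}$ on the same interval. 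We now repeat verbatim the derivation of \eqref{17}, but in each of \eqref{a12}, \eqref{16} and the bounds for $\tilde{H}_1,\tilde{H}_1',\tilde{H}_1''$ — that is, everywhere a tail mass $\int_{\text{outside a radius comparable to }R}|\omega_{i,\ep}|$ was controlled by Chebyshev together with Lemma \ref{thm-moment} — we insert the sharper bound $|\ln\ep|^{-2-l'}$, and we use $J_{i,\ep}(t)\lesssim R^2|\ln\ep|^{-4}$ where the moment enters directly. The genuine, non-vanishing part of the coefficient is unchanged: it is the $\frac{\tilde C}{\zeta}(1+\frac R{\rho^2})$ contribution coming from the interaction field $F^i$ and from the term $A_3$ (the $v^i_L$ piece), while every remaining term in \eqref{18} that carried a factor $|\ln\ep|^{-\sigma}$ now carries $|\ln\ep|^{-l'}$. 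We thus obtain $\frac{d}{dt}\mu_t(R,\zeta)\le\hat A_\ep(R,\zeta)\,\mu_t(R-\zeta,\zeta)$ on $[0,\tilde{T}_{l'}\wedge T_\ep]$.

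Next we iterate this inequality $n:=\lfloor l|\ln\ep|\rfloor$ times with $\zeta:=R/(2n+2)$, so that $R_j:=R-(j+1)\zeta$ decreases from $R_0=R-\zeta$ to $R_n=R/2$. Here $\zeta\sim R/|\ln\ep|$, whence $\frac{|\ln\ep|}{\zeta}\sim\frac{|\ln\ep|^2}{R}$ and $\frac{|\ln\ep|^2}{\zeta^2}\sim\frac{|\ln\ep|^4}{R^2}$; since $l'\ge4$, these growing powers are absorbed by the improved factors $|\ln\ep|^{-l'}$ and by $J_{i,\ep}(t)\lesssim|\ln\ep|^{-4}$, so that $\hat A_\ep(R_j,\zeta)\le (n+1)\hat C_A$ for $\ep$ small, with a constant $\hat C_A=\hat C_A(|x_0|,a_i,\overline{d},h,\rho,R)$ independent of $n$ — the same structural bound used in Proposition \ref{20}. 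Since $\Lambda_{i,\ep}(0)\subset B(x_0+P_i^0/|\ln\ep|,\ep)$ we have $\mu_0(R_j,\zeta)=0$ for $\ep$ small, so iterating \eqref{4-23} collapses to its remainder term; bounding $\mu_s(R_{n+1},\zeta)\le m^i_s(R'')\le|\ln\ep|^{-2-l'}$ for $R''$ comparable to $R$ and arguing exactly as in the proof of Proposition \ref{20} yields
\begin{equation*}
  \tilde m^i_t(R)\le\mu_t(R-\zeta,\zeta)\le\frac{1}{|\ln\ep|^{2+l'}}\big(e\hat C_A\,t\big)^{n+1},\qquad t\in[0,\tilde{T}_{l'}\wedge T_\ep].
\end{equation*}
Choosing $\overline{T}_l:=\min\{\tilde{T}_{l'},(e^2\hat C_A)^{-1},T\}$ makes $e\hat C_A t\le e^{-1}$ on $[0,\overline{T}_l]$, whence $\tilde m^i_t(R)\le e^{-(n+1)}\le e^{-l|\ln\ep|}=\ep^l$ for $t\in[0,\overline{T}_l\wedge T_\ep]$. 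Running the whole argument with $R$ replaced by $R/C_0'$ and using $m^i_t(R)\le\tilde m^i_t(R/C_0')$ gives \eqref{21}.

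The main obstacle is the re-derivation in the second paragraph: one must check that, even with $\zeta$ as small as $R/|\ln\ep|$, the coefficient $\hat A_\ep$ still splits into the genuine $O(n+1)$ part plus a part that tends to $0$ as $\ep\to0$. The delicate point is the symmetrized estimate for $A_2$ (the $v^i_K$ contribution), where the second difference of the transformed cut-off $W_{R,\zeta}\big(DT(x_0)(\cdot-B^{i,\ep})\big)$ produces the factor $|\ln\ep|^2/\zeta^2\sim|\ln\ep|^4/R^2$; this is compensated only by the combination of $J_{i,\ep}(t)\lesssim|\ln\ep|^{-4}$ and $\tilde m^i_t(R/2)\le|\ln\ep|^{-2-l'}$, and the cancellation must be carried through the decomposition $\tilde{H}_1=\tilde{H}_1'-\tilde{H}_1''+O(|\ln\ep|^{-2})$ so that no uncompensated power of $|\ln\ep|$ survives. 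Once this structural estimate is in place, the remainder is the same iteration as in Proposition \ref{20}, merely carried out $\lfloor l|\ln\ep|\rfloor$ times.
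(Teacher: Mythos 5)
Your proof follows the same core strategy as the paper: the only step in Proposition \ref{20} that obstructs running $n\sim|\ln\ep|$ iterations is the $\frac{1}{\zeta R^2|\ln\ep|^\sigma}$ term in \eqref{18}, which comes from controlling $\int_{\mathcal A_{R/2}^\complement}\omega_{i,\ep}$ in \eqref{16} via Chebyshev and Lemma \ref{thm-moment}; replacing that control by the stronger tail-mass bound from Proposition \ref{20} itself (paper: with $l=2$, giving $|\ln\ep|^{-4}$; yours: with $l'\ge4$, giving $|\ln\ep|^{-2-l'}$) fixes this, and the rest is the same iteration argument. Two small remarks. First, your claimed ``improved'' moment bound $J_{i,\ep}(t)\lesssim R^2|\ln\ep|^{-4}$ is in fact \emph{weaker} than the bound $J_{i,\ep}(t)\le|\ln\ep|^{-4-\sigma}$ already supplied by Lemma \ref{thm-moment} when $R$ is fixed and $\ep$ is small; it does no harm (it suffices for the coefficient absorption, at the price of making $\hat C_A$ depend on $R$, which is allowed), but there is nothing to gain from it, and in particular it is only the tail-mass bound in \eqref{16} that genuinely needs upgrading — the moments entering $\tilde H_1',\tilde H_1''$ are already handled by Lemma \ref{thm-moment}. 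Second, by taking $n=\lfloor l|\ln\ep|\rfloor$ iterations you place the $l$-dependence in the iteration count instead of the time interval, which produces an $\overline T_l$ that does not shrink with $l$; the paper instead keeps $n\sim|\ln\ep|$ and obtains $\overline T_l\sim e^{-(l+1)}$. Your variant is a slight (valid) strengthening, at the cost of requiring the $\ep$-smallness threshold to depend on $l$ (so the coefficient bound $\hat A_\ep\le(n+1)\hat C_A$ holds), which is compatible with the statement. Beyond these cosmetic points the argument matches the paper's.
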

\begin{proof}
    The strategy used in the proof of Proposition \ref{20} would give the stronger estimate  (\ref{21}) if we could choose $n=\lfloor|\ln \varepsilon|\rfloor$. But this means $h\sim |\ln\varepsilon|^{-1}$, which implies that the last term of  (\ref{18}) diverges as $\varepsilon$ vanishes. Notice that this term comes from \eqref{16}, and thus we need to improve the estimate of $A^1_2$.
    By Proposition \ref{20} and choosing $l=3-b$ we have
    \begin{equation*}
        m^i_t(c_0'R/2)\leq \frac{1}{|\ln \varepsilon|^4}\quad \forall t\in[0,\tilde{T}_2\wedge T_\varepsilon]
    \end{equation*}
    for any $\varepsilon$ small enough.Therefore,
    \begin{equation*}
        \begin{split}
            &\left|\iint_{\mathcal{A}_R^\complement\times \mathcal{A}_{R/2}^\complement}f(x,z,t)dzdx \right|+\left|\iint_{\mathcal{A}_{R/2}^\complement\times\mathcal{A}_{R/2}^\complement}f(x,z,t)dzdx\right|\\
            \leq&\overline{C}_K\left(\frac{|\ln \varepsilon|^2}{\zeta^2}+\frac{|\ln \varepsilon|}{\zeta}\right)\tilde{m}^i_t(R/2)\tilde{m}^i_t(R).
        \end{split}
    \end{equation*}
    Using this improved estimate, we finally have
    \begin{equation*}
        \frac{d}{dt}\mu_t(R,\zeta)\leq\tilde{A}_\varepsilon(R,\zeta)\mu_t(R-\zeta,\zeta),
    \end{equation*}
    with

    \begin{equation*}
        \tilde{A}_\varepsilon(R,\zeta)=\frac{\overline{C}}{\zeta}\left(1+\frac{R}{\rho^2}+\frac{1}{R^3|\ln\varepsilon|^\sigma}+\frac{1}{|\ln \varepsilon|}+\frac{1}{|\ln \varepsilon|^2}+\frac{1}{R^2|\ln \varepsilon|^\sigma}+\frac{1}{\zeta|\ln \varepsilon|^2}\right).
    \end{equation*}
    So there exists $\tilde{C}_A>0$ such that if $\varepsilon$ sufficiently small we have
    \begin{equation*}
        \tilde{A}_\varepsilon(R,h)\leq \frac{n\tilde{C}'_A}{R}+\frac{n\tilde{C}'_A}{\rho^2}\quad \tilde{C}'_A=\tilde{C}'_A(|x_0|,a_i,\overline{d},h).
    \end{equation*}
    Similarly, if we choose $\overline{T}_l=\frac{R\rho^2}{\tilde{C}_A(\rho^2+R)}e^{-(l+2)}\leq \frac{\rho^2R/C'_0}{\tilde{C}'_A(\rho^2+R/C'_0)}e^{-(l+2)}$, where $\tilde{C}_A=\tilde{C}_A(|x_0|,a_i,\overline{d},h)\geq C_A$, we will conclude  (\ref{21}).
\end{proof}

\section{Localization of vortices support and proof of the main theorem}
In the first part of this section we study the location and size of the support $\Lambda_{i,\varepsilon}(t)$. In order to obtain the necessary cancellation, we need to estimate the transformed distance $|DT(x_0)(x(t)-B^{i,\varepsilon}(t))|$, rather than $|x(t)-B^{i,\varepsilon}(t)|$ directly. In the second part we prove Theorem \ref{thm-main}.
\subsection{Localization of vortices support}
Recall that for any $z\in\mathbb{R}^2$,
\begin{equation}\label{5-1}
    c_0|z|\leq|DT(x_0)z|\leq C_0|z|.
\end{equation}
 Define
\begin{equation}
	R_t:=|\ln \varepsilon|\max\{|x-B^{i,\varepsilon}(t)|:x\in\Lambda_{i,\varepsilon}(t)\}.
\end{equation}
\begin{lemma}\label{25}
 Let $x(t)$ be a solution to $\dot{x}(t)=\frac{1}{|\ln\varepsilon|^{1-b}}v(x(t),t)$ with initial condition $x(0)\in\Lambda_{i,\varepsilon}(0)$ and suppose that at time $t\in(0,T_\varepsilon)$ it happens that
    \begin{equation}\label{time}
        |\ln \varepsilon||x(t)-B^{i,\varepsilon}(t)|=R_t.
    \end{equation}
Denote $\tilde{R}_t=|\ln \varepsilon||DT(x_0)(x(t)-B^{i,\varepsilon}(t))|$. Then, at that time $t$ such that \eqref{time} holds, there exists $\tilde{C}'=\tilde{C}'(|x_0|,a_i,\overline{d},h)$ such that for each fixed $\sigma\in(0,1)$ and any $\varepsilon$ small enough,
    \begin{equation}\label{22}
        \dot{\tilde{R}}_t\leq\tilde{C}'\left(1+\tilde{R}_t+\frac{1}{|\ln \varepsilon|^\sigma\tilde{R}_t^3}+\frac{|\ln\varepsilon|^b}{\varepsilon}\sqrt{m^i_t\left(\tilde{R}_t/2c_0\right)}\right).
    \end{equation}
\end{lemma}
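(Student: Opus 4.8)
The plan is to differentiate $\tilde R_t$ directly and reduce everything to estimating the four pieces of the velocity. Write $z(t)=x(t)-B^{i,\varepsilon}(t)$ and $\tilde z(t)=DT(x_0)z(t)$, so that $\tilde R_t=|\ln\varepsilon|\,|\tilde z(t)|$ and hence $\dot{\tilde R}_t=|\ln\varepsilon|\,\tfrac{\tilde z}{|\tilde z|}\cdot DT(x_0)\dot z(t)$. Since $\dot x(t)=v(x(t),t)$ and, applying Lemma \ref{13} with $\alpha(x,t)=x$, $\dot B^{i,\varepsilon}(t)=\tfrac1{\gamma_i}\int v(y,t)\omega_{i,\varepsilon}(y,t)\,dy$, one has $\dot z(t)=v(x(t),t)-\tfrac1{\gamma_i}\int v(y,t)\omega_{i,\varepsilon}(y,t)\,dy$. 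Inserting the splitting $v=v^i_K+v^i_L+v^i_S+F^i$ of \eqref{decom v}, it suffices to bound, for each $\bullet\in\{K,L,S,F\}$, the quantity $\tfrac{\tilde z}{|\tilde z|}\cdot DT(x_0)\bigl(v^i_\bullet(x(t),t)-\tfrac1{\gamma_i}\int v^i_\bullet(y,t)\omega_{i,\varepsilon}(y,t)\,dy\bigr)$, multiply by $|\ln\varepsilon|$, and add up. All four are routine except the one coming from $v^i_K$, and the whole point of working with $\tilde z$ instead of $z$ is to keep the symmetry cancellation in that term available.

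For the three lower-order pieces: by \eqref{estimate vs} the $S$-difference is $O(|\gamma_i|)=O(|\ln\varepsilon|^{-2})$; by $\bigl|x^\perp/(2|X|^2)\bigr|\le h^{-1}$ together with \eqref{4-18} the $L$-difference is $O(|\ln\varepsilon|^{-1})$; and rewriting the $F$-difference as $\tfrac1{\gamma_i}\int\bigl(F^i(x(t),t)-F^i(y,t)\bigr)\omega_{i,\varepsilon}(y,t)\,dy$, the Lipschitz bound \eqref{14} combined with $|x(t)-y|\le 2R_t/|\ln\varepsilon|\le (2/c_0)\,\tilde R_t/|\ln\varepsilon|$ for $y\in\Lambda_{i,\varepsilon}(t)$ (using \eqref{5-1} and the maximality \eqref{time}) makes it $O(\tilde R_t/|\ln\varepsilon|)$. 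After the factor $|\ln\varepsilon|$ these yield the $1+\tilde R_t$ part of \eqref{22}.

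The hard part will be $v^i_K$. First, the averaged part $\tfrac1{\gamma_i}\int v^i_K(y,t)\omega_{i,\varepsilon}(y,t)\,dy$ is harmless: symmetrizing the resulting double integral in its two arguments exactly as in the estimate of $A^1_2$ in Proposition \ref{20}, the factor $DT(y)^*-DT(z)^*=O(|y-z|)$ kills the $|y-z|^{-1}$ singularity of the kernel and one gets $O(\gamma_i^2)$. So I am left with $\tfrac{\tilde z}{|\tilde z|}\cdot DT(x_0)v^i_K(x(t),t)$, and I would split the $w$-integral defining it at a radius of order $\tilde R_t/|\ln\varepsilon|$, so that on the far part the mass of $\omega_{i,\varepsilon}$ is $\le m^i_t(\tilde R_t/2c_0)$ while on the near part $|\tilde z-\tilde w|$, with $\tilde w:=DT(x_0)(w-B^{i,\varepsilon}(t))$, stays comparable to $|\tilde z|$. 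On the near part I would Taylor-expand $H(x(t),w)=H(x_0,x_0)+O(|\ln\varepsilon|^{-1})$, $DT(x(t))=DT(x_0)+O(|\ln\varepsilon|^{-1})$, $T(x(t))-T(w)=DT(x_0)(x(t)-w)+(\text{lower order})$, and pull the $\perp$ through via $(Qw)^\perp=Q^*w^\perp$; since $x_0=(r_0,0)$ makes $DT(x_0)$ diagonal, $DT(x_0)DT(x_0)^*$ is a scalar matrix (as noted in the proof of Proposition \ref{20}), so the leading term becomes a fixed multiple of $\tfrac{\tilde z}{|\tilde z|}\cdot\int_{\mathrm{near}}\tfrac{(\tilde z-\tilde w)^\perp}{|\tilde z-\tilde w|^2}\omega_{i,\varepsilon}(w,t)\,dw$. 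Here the cancellation kicks in: $\tilde z\cdot(\tilde z-\tilde w)^\perp=-\tilde z\cdot\tilde w^\perp$, and since $\int\tilde w\,\omega_{i,\varepsilon}(w,t)\,dw=DT(x_0)\int(w-B^{i,\varepsilon})\omega_{i,\varepsilon}=0$ I may subtract $\tilde z\cdot\tilde w^\perp/|\tilde z|^2$ and obtain $-\tfrac1{|\tilde z|}\int\tilde z\cdot\tilde w^\perp\,\tfrac{2\tilde z\cdot\tilde w-|\tilde w|^2}{|\tilde z-\tilde w|^2|\tilde z|^2}\omega_{i,\varepsilon}\,dw$; on the near region this is $\lesssim|\tilde z|^{-3}\int|DT(x_0)(w-B^{i,\varepsilon})|^2\omega_{i,\varepsilon}\,dw\le C_0^2|\tilde z|^{-3}J_{i,\varepsilon}(t)=C|\ln\varepsilon|^3\tilde R_t^{-3}J_{i,\varepsilon}(t)$, so by Lemma \ref{thm-moment} and the $|\ln\varepsilon|$ factor it becomes $\lesssim(|\ln\varepsilon|^\sigma\tilde R_t^3)^{-1}$, the Taylor remainders being strictly lower order (contributing $\lesssim|\ln\varepsilon|^{-1}(1+\tilde R_t^{-1})$, absorbed into the $1$ and $(|\ln\varepsilon|^\sigma\tilde R_t^3)^{-1}$ terms). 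On the far part I would use only $|H|,|DT|\le C$ and $|T(x)-T(y)|\ge c|x-y|$ to get $|v^i_K(x(t),t)_{\mathrm{far}}|\le C\int_{\{|w-B^{i,\varepsilon}|\ge\tilde R_t/(2c_0|\ln\varepsilon|)\}}|x(t)-w|^{-1}\omega_{i,\varepsilon}\,dw$, and the rearrangement (bathtub) inequality with $\|\omega_{i,\varepsilon}\|_\infty\le M\varepsilon^{-2}$ bounds this by $\tfrac{C}{\varepsilon}\sqrt{m^i_t(\tilde R_t/2c_0)}$, giving the last term of \eqref{22} after the harmless factors $|\ln\varepsilon|$ and $\le C_0$. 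Collecting the four contributions into one constant $\tilde C'=\tilde C'(|x_0|,a_i,\overline d,h)$ then yields \eqref{22}.

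The main obstacle, as indicated, is the $v^i_K$ term: in the isotropic model $\mathcal L_K=\Delta$ one would directly use $\tfrac{x-B}{|x-B|}\cdot\tfrac{(x-w)^\perp}{|x-w|^2}=-\tfrac{(x-B)\cdot(w-B)^\perp}{|x-B|\,|x-w|^2}$ and the vanishing first moment $\int(w-B^{i,\varepsilon})\omega_{i,\varepsilon}=0$, but the Biot--Savart kernel in \eqref{decom v} carries the matrix $DT(x)$, so this identity closes only after transporting to the variable $\tilde z=DT(x_0)z$, which converts $DT(x_0)DT(x_0)^*$ into a scalar and reinstates the required antisymmetry — this is exactly why $\tilde R_t=|\ln\varepsilon|\,|DT(x_0)(x(t)-B^{i,\varepsilon}(t))|$, rather than $|\ln\varepsilon|\,|x(t)-B^{i,\varepsilon}(t)|$, is the quantity that gets differentiated. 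The remaining work is bookkeeping: tuning the near/far splitting radius so that on the near part $|\tilde z-\tilde w|\gtrsim|\tilde z|$ (using that $w$ then stays away from the extremal point $x(t)$) and that on the far part the mass is controlled by $m^i_t(\tilde R_t/2c_0)$, and checking that the Taylor remainders indeed land below the right-hand side of \eqref{22}.
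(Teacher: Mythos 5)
Your proposal follows essentially the same route as the paper's proof of Lemma \ref{25}: differentiate $\tilde R_t=|\ln\varepsilon|\,|DT(x_0)(x(t)-B^{i,\varepsilon}(t))|$, insert the decomposition $v=v^i_K+v^i_L+v^i_S+F^i$, treat the $L$, $S$, $F$ pieces and the averaged $v^i_K$ by symmetrizing with $DT(y)^*-DT(z)^*$ (the paper's $A^2_2$-type trick, not $A^1_2$ as you cite), split $v^i_K(x(t),t)$ at radius $\sim R_t/(2|\ln\varepsilon|)$, Taylor-expand and use the first-moment cancellation on the near part and the rearrangement inequality on the far part. One small bookkeeping point: when you invoke $\int\tilde w\,\omega_{i,\varepsilon}\,dw=0$ to subtract $\tilde z\cdot\tilde w^\perp/|\tilde z|^2$ from the near-region integrand, the moment identity holds over all of $\mathbb R^2$, so the subtraction also produces a far-region remainder $\int_{\{|\tilde w|\gtrsim\tilde R_t/|\ln\varepsilon|\}}\tilde z\cdot\tilde w^\perp/|\tilde z|^3\,\omega_{i,\varepsilon}\,dw$ — the paper's $H_1''$ — which must be controlled separately (Chebyshev gives the same $|\ln\varepsilon|^3\tilde R_t^{-3}J_{i,\varepsilon}(t)$ bound), so the argument closes but the term should be accounted for.
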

\begin{proof}
For simplicity, denote $x_1=x(t)$. By the definitions and direct computations, we have
\begin{equation}\label{5-5}
\begin{split}
     &\quad \frac{d}{dt}|DT(x_0)(x-B^{i,\varepsilon}(t))|\\
     &=DT(x_0)\left(\frac{1}{|\ln\varepsilon|^{1-b}}v(x_1,t)-\dot{B}^{i,\varepsilon}(t)\right)\cdot\frac{DT(x_0)(x_1-B^{i,\varepsilon}(t))}{|DT(x_0)(x_1-B^{i,\varepsilon}(t))|}\\
     &=\frac{1}{|\ln\varepsilon|^{1-b}}DT(x_0)(v^i_K(x_1,t)+v^i_L(x_1,t)+v^i_S(x_1,t)+F^i(x_1,t))\cdot\frac{DT(x_0)(x_1-B^{i,\varepsilon}(t))}{|DT(x_0)(x_1-B^{i,\varepsilon}(t))|}\\
     &-\frac{1}{|\ln\varepsilon|^{1-b}\gamma_i}\int DT(x_0)(v^i_K(y,t)+v^i_L(y,t)+v^i_S(y,t)+F^i(y,t))\omega_{i,\varepsilon}(y,t)dy\cdot\frac{DT(x_0)(x_1-B^{i,\varepsilon}(t))}{|DT(x_0)(x_1-B^{i,\varepsilon}(t))|}.
\end{split}
\end{equation}

 For the term in \eqref{5-5} involving $v^i_K(x_1,t)$, using \eqref{decom v}, by direct computation we get
\begin{equation*}
    \begin{split}
        &DT(x_0)v^i_K(x_1,t)\cdot\frac{DT(x_0)(x_1-B^{i,\varepsilon}(t))}{|DT(x_0)(x_1-B^{i,\varepsilon}(t))|}\\
        =&\frac{DT(x_0)(x_1-B^{i,\varepsilon}(t))}{|DT(x_0)(x_1-B^{i,\varepsilon}(t)|)}\cdot\int H(x_1,y)DT(x_0)DT(x_1)^*\frac{(T(x_1)-T(y))^\perp}{|T(x_1)-T(y)|^2}\omega_{i,\varepsilon}(y,t)dy\\
        :=&H_1+H_2,
    \end{split}
\end{equation*}
where $H_1$ and $H_2$ are defined by the above integral restricted to the interior and exterior of the disk $B(B^{i,\varepsilon}(t),\frac{R_t}{2|\ln \varepsilon|})$, respectively. Similar to the proof of Proposition \ref{20}, by the initial condition,  and simple Taylor's expansion of $H(x,y)$ and $T(y)$ at $x_0$,  we obtain for $t\in(0,T_\varepsilon)$ that
\begin{equation*}
    \begin{split}
        H_1&=\frac{DT(x_0)(x_1-B^{i,\varepsilon}(t))}{|DT(x_0)(x_1-B^{i,\varepsilon}(t))|}\\
        &\qquad \qquad \cdot\int_{B(B^{i,\varepsilon}(t),\frac{R_t}{2|\ln \varepsilon|})} H(x_1,y)DT(x_0)DT(x_1)^*\frac{(T(x_1)-T(y))^\perp}{|T(x_1)-T(y)|^2}\omega_{i,\varepsilon}(y,t)dy\\
        &=\frac{DT(x_0)(x_1-B^{i,\varepsilon}(t))}{|DT(x_0)(x_1-B^{i,\varepsilon}(t))|}\\
        &\qquad \qquad\cdot\int_{B(B^{i,\varepsilon}(t),\frac{R_t}{2|\ln \varepsilon|})} H(x_0,x_0)DT(x_0)DT(x_0)^*\frac{(DT(x_0)(x_1-y))^\perp}{|DT(x_0)(x_1-y)|^2}\omega_{i,\varepsilon}(y,t)dy\\&+O\left(\frac{1}{|\ln \varepsilon|^{1+b}}\right).
    \end{split}
\end{equation*} And the above identity can be rewritten as follows:
    \begin{equation}
        H_1=C_{K,1}\int_{|y'|\leq\frac{R_t}{2|\ln \varepsilon|}}\frac{-\tilde x_1'\cdot \tilde y'^\perp}{|\tilde x_1'||\tilde x_1'-\tilde y' |^2}\tilde{\omega}_{i,\varepsilon}(y',t)dy'+O\left(\frac{1}{|\ln \varepsilon|^{1+b}}\right).
    \end{equation}
    Using the identity $\int \tilde y' \tilde{\omega}_{i,\varepsilon}(y',t)dy'=0$ due to the definition of $B^{i,\varepsilon}(t)$, we further decompose $H_1$ as follows:
    \begin{equation}
        H_1=H_1'-H_1''+O\left(\frac{1}{|\ln \varepsilon|^{1+b}}\right),
    \end{equation}
    where
    \begin{equation*}
        H_1'=-C_{K,1}\int_{|y'|\leq\frac{R_t}{2|\ln \varepsilon|}}\frac{(\tilde x_1'\cdot \tilde y'^\perp) \tilde y'\cdot(2\tilde  x_1'-\tilde y')}{|\tilde x_1'||\tilde x_1'-\tilde y' |^2|\tilde x_1'|^2}\tilde{\omega}_{i,\varepsilon}(y',t)dy',
    \end{equation*}
    \begin{equation*}
        H''_1=-C_{K,1}\int_{|y'|\geq\frac{R_t}{2|\ln \varepsilon|}}\frac{\tilde x_1'\cdot \tilde y'^\perp}{|\tilde x_1'|^3}\tilde{\omega}_{i,\varepsilon}(y',t)dy'.
    \end{equation*}
    We have  $|\tilde x_1'|\geq\frac{c_0R_t}{|\ln \varepsilon|}$, and  for $|y'|\leq\frac{R_t}{2|\ln \varepsilon|}$, there hold  $|2\tilde x_1'-\tilde y'|\leq |\tilde x_1'-\tilde y'|+|\tilde x_1'|$  and $|\tilde x_1'-\tilde y'|\geq\frac{c_0R_t}{2|\ln \varepsilon|}$. Therefore, we obtain
    \begin{equation}
        \begin{split}
            |H'_1|\leq  \overline{C}'_{K,1}\frac{|\ln \varepsilon|^3}{R_t^3}\int_{|y'|\leq\frac{R_t}{2|\ln \varepsilon|}}|\tilde y'|^2\tilde{\omega}_{i,\varepsilon}(y',t)dy' \leq \overline{C}'_{K,1}\frac{|\ln \varepsilon|^3}{R_t^3}J_{i,\varepsilon}(t).
        \end{split}
    \end{equation}
    And it follows easily from   Chebyshev's inequality that
    \begin{equation}
        \begin{split}
            |H''_1| \leq\tilde{C}'_H\frac{1}{|\tilde x_1'|^2}\int_{|y'|\geq\frac{R_t}{2|\ln \varepsilon|}}|\tilde y'|\tilde{\omega}_{i,\varepsilon}(y',t)dy' \leq\tilde{C}'_{H,1}\frac{|\ln \varepsilon|^3}{R_t^3}J_{i,\varepsilon}(t).
        \end{split}
    \end{equation}
    Then using  (\ref{15}) again, we have
    \begin{equation}
        |H_1|\leq \overline{C}'''_{K,1}\left(\frac{1}{R_t^3|\ln \varepsilon|^{\sigma+b}}+\frac{1}{|\ln \varepsilon|^{1+b}}\right).
    \end{equation}

    We   denote $\tilde{r}_i$ such that $\frac{M}{\varepsilon^2}\pi \tilde{r}_i^2=m^i_t(R_t/2)$.
    Then by the rearrangement inequality, we calculate
    \begin{equation}
        \begin{split}
         |H_2|&\leq    \left|\frac{DT(x_0)(x_1-B^{i,\varepsilon}(t))}{|DT(x_0)(x_1-B^{i,\varepsilon}(t))|}\right.\\
         &\left.\qquad \qquad\qquad\cdot\int_{B(B^{i,\varepsilon}(t),\frac{R_t}{2|\ln \varepsilon|})^\complement} H(x_1,y)DT(x_0)DT(x_1)^*\frac{(T(x_1)-T(y))^\perp}{|T(x_1)-T(y)|^2}\omega_{i,\varepsilon}(y,t)dy\right|\\
         &\leq C_{H,D}\int_{B(B^{i,\varepsilon}(t),\frac{R_t}{2|\ln \varepsilon|})^\complement}\frac{|\omega_{i,\varepsilon}(y,t)|}{|T(x_1)-T(y)|}dy\\
         &\leq \frac{C_{H,D}}{C_T}\int_{B(B^{i,\varepsilon}(t),\frac{R_t}{2|\ln \varepsilon|})^\complement}\frac{|\omega_{i,\varepsilon}(y,t)|}{|x_1-y|}dy\\
         &\leq\frac{C_{H,D}}{C_T}\frac{2\pi M}{\varepsilon^2}\int_0^{\tilde{r}_i}\frac{r}{r}dr
         =\frac{2C_{H,D}}{\varepsilon C_T}\sqrt{\pi Mm^i_t(R_t/2)}\\
         &\leq\frac{2C_{H,D}}{\varepsilon C_T}\sqrt{\pi Mm^i_t(\tilde{R}_t/2c_0)}.
        \end{split}
    \end{equation}

    As for the term in \eqref{5-5} containing $v^i_S(x_1,t)$, by using \eqref{estimate vs}, we find
    \begin{equation}
        \left|DT(x_0)v^i_S(x_1,t)\cdot\frac{DT(x_0)(x_1-B^{i,\varepsilon}(t))}{|DT(x_0)(x_1-B^{i,\varepsilon}(t))|}\right|\leq\frac{C_S|a_i|}{|\ln \varepsilon|^{1+b}}.
    \end{equation}
    Notice that there exists $C_H=C_H(|x_0|,h,\overline{d})$ such that $|DT(x_0)H(x,y)\frac{x^\perp}{2|X_1|^2}|\leq C_H$. Therefore, using the definition of $v^i_L$ in \eqref{decom v} and the estimate in \eqref{4-18}, we get
    \begin{equation}
        \begin{split}
            \left|DT(x_0)v^i_L(x_1,t)\cdot\frac{DT(x_0)(x_1-B^{i,\varepsilon}(t))}{|DT(x_0)(x_1-B^{i,\varepsilon}(t))|}\right|
            &\leq  C_H\left|\int \ln |T(x_1)-T(y)\omega_{i,\varepsilon}(y,t)dy|\right|\\
            &\leq \frac{C'_L}{|\ln \varepsilon|^b}.
        \end{split}
    \end{equation}

    On the other hand, by Taylor's expansion, it is easy to see that there exists $C'_H=C'_H(|x_0|,h,\overline{d})$ satisfying
    \begin{equation*}
        \left|DT(x_0)H(y,z)(DT(y)^*-DT(z)^*)\cdot\frac{(T(y)-T(z))^\perp}{|T(y)-T(z)|^2}\right|\leq C'_{H,D}|y-z|\cdot\frac{|y-z|}{|y-z|^2}=C'_{H,D}.
    \end{equation*} Then, by symmetry properties and similar argument as above, we have
    \begin{equation}
        \begin{split}
            &\left|\frac{1}{\gamma_i}\int DT(x_0)(v^i_K(y,t)+v^i_L(y,t)+v^i_S(y,t))\cdot\frac{DT(x_0)(x_1-B^{i,\varepsilon}(t))}{|DT(x_0)(x_1-B^{i,\varepsilon}(t))|}\omega_{i,\varepsilon}(y,t)dy \right|\\
            \leq&\left|\frac{1}{2\gamma_i}\iint DT(x_0)H(y,z)(DT(y)^*-DT(z)^*)\cdot\frac{(T(y)-T(z))^\perp}{|T(y)-T(z)|^2} \omega_{i,\varepsilon}(y,t)\omega_{i,\varepsilon}(z,t)dydz\right|\\
            +&\left|\frac{1}{\gamma_i}\iint DT(x_0)\frac{y^\perp}{2|Y|^2}H(y,z)\ln |T(y)-T(z)|\omega_{i,\varepsilon}(y,t)\omega_{i,\varepsilon}(z,t)dydz\right|+\frac{C_S|a_i|}{|\ln \varepsilon|^{1+b}}\\
            \leq& \frac{|a_i|C'_{H,D}}{|\ln \varepsilon|^{1+b}}+\frac{C'_L}{|\ln \varepsilon|^b}+\frac{C_S|a_i|}{|\ln \varepsilon|^{1+b}}.
        \end{split}
    \end{equation}
    Finally, for the term involving $F^i$ in \eqref{5-5}, by  (\ref{14}), we compute
    \begin{equation}
        \begin{split}
            &\left|DT(x_0)F^i(x,t)\cdot\frac{DT(x_0)(x_1-B^{i,\varepsilon}(t))}{|DT(x_0)(x_1-B^{i,\varepsilon}(t))|}\right.\\
            &\qquad\qquad \left.-\frac{1}{\gamma_i}\int DT(x_0)F^i(x,t)\omega_{i,\varepsilon}(y,t)dy\cdot\frac{DT(x_0)(x_1-B^{i,\varepsilon}(t))}{|DT(x_0)(x_1-B^{i,\varepsilon}(t))|}\right|\\
            \leq&\frac{C_0}{\gamma_i}\int |F^i(x,t)-F^i(y,t)|\omega_{i,\varepsilon}(y,t)|dy\\
            \leq&\frac{C'_Fa}{2\rho^2|\ln \varepsilon|}R_t\leq\frac{C'_Fa}{2\rho^2|\ln \varepsilon|C_0}\tilde{R}_t.
        \end{split}
    \end{equation}
    In conclusion, adding the above estimates together we get the desired inequality (\ref{22}) and hence finish the proof.
\end{proof}

\begin{proposition}\label{27}
    There exists $T'_{\rho}\in(0,T]$ such that, for any $\varepsilon$ small enough,
    \begin{equation}\label{24}
        \Lambda_{i,\varepsilon}(t)\subset B(B^{i,\varepsilon}(t),\frac{\rho}{2|\ln \varepsilon|})\quad \forall t
        \in[0,T'_\rho\wedge T_\varepsilon].
    \end{equation}
\end{proposition}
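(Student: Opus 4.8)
The plan is to upgrade the weak localization of Propositions~\ref{20}--\ref{23} to the strong statement \eqref{24} by controlling the rescaled radius $\tilde R_t$ of the $i$-th blob through the differential inequality \eqref{22} of Lemma~\ref{25}, feeding in the decay of the excess mass $m^i_t$, and then running a barrier (continuity) argument on a time interval $T'_\rho$ that does not depend on $\varepsilon$. It is convenient here to work with the Lipschitz quantity $\tilde R_t:=|\ln\varepsilon|\max\{|DT(x_0)(x-B^{i,\varepsilon}(t))|:x\in\Lambda_{i,\varepsilon}(t)\}$ (a maximum over characteristics of Lipschitz functions), which by \eqref{5-1} satisfies $R_t\le c_0^{-1}\tilde R_t$ and $\tilde R_t\le C_0R_t$; it then suffices to prove $\tilde R_t\le c_0\rho/2$ on $[0,T'_\rho\wedge T_\varepsilon]$.

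First I would record that the blobs start essentially as points: since $\Lambda_{i,\varepsilon}(0)\subset B(x_0+P_i^0/|\ln\varepsilon|,\varepsilon)$ and the center of vorticity $B^{i,\varepsilon}(0)$ is a convex combination of points of $\Lambda_{i,\varepsilon}(0)$, one gets $R_0\le 2|\ln\varepsilon|\varepsilon$ and hence $\tilde R_0\le C_0R_0\to 0$. Next I fix $\sigma=\tfrac12$ in Lemma~\ref{25}, fix a threshold $\rho_1\in(0,c_0\rho/2)$, and fix $l=4$; applying Proposition~\ref{23} with the \emph{fixed} radius $R=\rho_1/(2c_0)$ produces $\overline T_l\in(0,T]$, independent of $\varepsilon$, with $m^i_t(\rho_1/(2c_0))\le\varepsilon^4$ on $[0,\overline T_l\wedge T_\varepsilon]$; by monotonicity of $r\mapsto m^i_t(r)$ this gives $m^i_t(\tilde R_t/2c_0)\le\varepsilon^4$ whenever $\tilde R_t\ge\rho_1$. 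Consequently, on $\{\tilde R_t\ge\rho_1\}\cap[0,\overline T_l\wedge T_\varepsilon]$ both singular terms in \eqref{22} are $\le 1$ once $\varepsilon$ is small ($|\ln\varepsilon|^{-\sigma}\tilde R_t^{-3}\le\rho_1^{-3}|\ln\varepsilon|^{-1/2}$ and $\varepsilon^{-1}\sqrt{m^i_t(\tilde R_t/2c_0)}\le\varepsilon$), so Lemma~\ref{25} collapses to the linear estimate $\dot{\tilde R}_t\le\tilde C'(3+\tilde R_t)$ at a.e.\ time where $\tilde R_t$ equals the running maximum.

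I would then set $T'':=\frac{1}{2\tilde C'}\ln\frac{3+c_0\rho/2}{3+\rho_1}>0$ and $T'_\rho:=\overline T_l\wedge T''$, and argue by contradiction. If $\tau\in(0,T'_\rho\wedge T_\varepsilon]$ is the first time $\tilde R_\tau=c_0\rho/2$, then since $\tilde R_0<\rho_1$ for $\varepsilon$ small and $t\mapsto\tilde R_t$ is continuous, there is a last $t_0\in[0,\tau)$ with $\tilde R_{t_0}=\rho_1$, so $\tilde R_t\ge\rho_1$ on $[t_0,\tau]$. On this interval the linear bound above holds at a.e.\ time (applying Lemma~\ref{25} along a characteristic realizing the running maximum), so Gronwall gives $3+\tilde R_\tau\le(3+\rho_1)e^{\tilde C'(\tau-t_0)}\le(3+\rho_1)e^{\tilde C'T''}=\sqrt{(3+\rho_1)(3+c_0\rho/2)}<3+c_0\rho/2$, contradicting $\tilde R_\tau=c_0\rho/2$. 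Hence $\tilde R_t\le c_0\rho/2$ on $[0,T'_\rho\wedge T_\varepsilon]$, which is \eqref{24}; this confines each blob to a disk of radius $\rho/(2|\ln\varepsilon|)$ around its own center of vorticity — half the budget — as needed for the continuation argument establishing $T_\varepsilon\ge T'_\rho$ in the next section.

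The hard point is structural rather than computational: the terms $|\ln\varepsilon|^{-\sigma}\tilde R_t^{-3}$ and $\varepsilon^{-1}\sqrt{m^i_t(\cdot)}$ in \eqref{22} are genuinely singular as $\tilde R_t\downarrow 0$ (when $\tilde R_t$ is tiny, most of the mass lies outside $B(B^{i,\varepsilon}(t),\tilde R_t/2c_0|\ln\varepsilon|)$, so $\varepsilon^{-1}\sqrt{m^i_t}\sim\varepsilon^{-1}|\ln\varepsilon|^{-1}$), so one cannot bound $\dot{\tilde R}_t$ uniformly at all small radii; the argument must be arranged so that these terms are only ever evaluated while $\tilde R_t$ stays above the fixed scale $\rho_1$, the Gronwall growth then forcing the crossing from $\rho_1$ up to $c_0\rho/2$ to consume the fixed time $T''$. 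A secondary check — and the reason $DT(x_0)$ is built into Lemma~\ref{25}, since the singular term was tamed via cancellations in the $\tilde x',\tilde y'$ variables — is that Lemma~\ref{25}, stated for the maximizer of $|x(t)-B^{i,\varepsilon}(t)|$, applies verbatim, or after performing the annular decompositions at the transformed radius $\tilde R_t/2$, to the maximizer of $|DT(x_0)(x(t)-B^{i,\varepsilon}(t))|$; this is harmless because $c_0R_t\le\tilde R_t\le C_0R_t$. Finally, $T'_\rho$ is manifestly $\varepsilon$-independent: $\overline T_l$ comes from Proposition~\ref{23} and $T''$ is explicit in $\tilde C',c_0,\rho,\rho_1$.
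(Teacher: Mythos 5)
Your proposal is correct and follows essentially the same route as the paper: feed Proposition~\ref{23} into Lemma~\ref{25} to tame the two singular terms while $\tilde R_t$ is pinned between a fixed lower threshold and $c_0\rho/2$, then extract an $\varepsilon$-independent crossing time by Gronwall and take $T'_\rho$ as the minimum of that and $\overline T_l$. The only cosmetic deviation is that you track the genuine maximum of $|DT(x_0)(x-B^{i,\varepsilon}(t))|$ over $\Lambda_{i,\varepsilon}(t)$ (re-deriving Lemma~\ref{25} with annuli at the transformed radius), whereas the paper evaluates $\tilde R_t$ at the maximizer of the untransformed distance and runs an explicit barrier ODE $R(t)$ in place of your direct Gronwall, but both yield the same estimate up to the fixed factors $c_0,C_0$.
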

\begin{proof}
    Let $\overline{T}_3$ be the constant as in Proposition \ref{23} with the choice $R=\frac{c_0\rho}{10C_0}$ and $l=3$. Here $c_0$ stands for the constant in \eqref{5-1}.
    Define
    $$t_1:=\sup\{t\in[0,\overline{T}_3\wedge T_\varepsilon]:\tilde{R}_s\leq\frac{c_0\rho}{2},\forall s\in[0,t]\},$$
    where $\tilde{R}_s$ is given in Lemma \ref{25}.

    Notice that when $\tilde{R}_s\leq\frac{c_0\rho}{2}$, we have $R_s\leq \frac{1}{c_0}\tilde{R}_s\leq\frac{\rho}{2}$.

    If $t_1=\overline{T}_3\wedge T_\varepsilon$,  then (\ref{24}) is proved with $T'_\rho=\overline{T}_3$. Otherwise, if $t_1<\overline{T}_3\wedge T_\varepsilon$, we define
    \begin{equation*}
        t_0:=\inf\left\{t\in[0,t_1]:\tilde{R}_s>\frac{c_0\rho}{5},\forall s\in[t,t_1]\right\}.
    \end{equation*}
    Since $\tilde{R}_0\leq 2C_0\varepsilon$, we have $t_0>0$ provided $\ep$ sufficiently small. Moreover, by the definition, it is obvious that $\tilde{R}_{t_1}=\frac{c_0\rho}{2}$, $\tilde{R}_{t_0}=\frac{c_0\rho}{5}$, and $\tilde{R}_{t}\in[\frac{c_0\rho}{5},\frac{c_0\rho}{2}]$ for any $t\in[t_0,t_1]$. In particular, one has $R_{t}\in[\frac{c_0\rho}{5C_0},\frac{\rho}{2}]$, and by Proposition \ref{23},
    \begin{equation*}
        m^i_t\left(\tilde{R}_t/2c_0\right)\leq m^i_t\left(c_0\rho/10C_0\right)\leq \varepsilon^3\quad \forall t\in[t_0,t_1].
    \end{equation*}

    Let $R(t)$ satisfy
    \begin{equation}\label{26}
        \dot{R}(t)=\tilde{C}'\left(2+\rho^{-2}R(t)+\frac{1}{|\ln \varepsilon|^\sigma R(t)^3}+g_\varepsilon(t)\right),\quad R(t_0)=\frac{c_0\rho}{4},
    \end{equation} where $g_\varepsilon(t)$ is any smooth function which is an upper bound for the last term in equation (\ref{22}).
    We claim that if $\varepsilon$ is small enough then
    \begin{equation}\label{claim}
    	  \Lambda_{i,\varepsilon}(t)\subset B(B^{i,\varepsilon}(t),c_0^{-1}R(t)), \ \ \ \text{ for any}\ \   t\in(t_0,t_1).
    \end{equation}
    Indeed, this is true for $t=t_0$ because $R_{t_0}\leq\frac{\tilde{R}_{t_0}}{c_0}=\frac{\rho}{5}<\frac{\rho}{4}=\frac{R(t_0)}{c_0}$. Suppose that claim \eqref{claim} is not true and  there exists a first time $t_*\in(t_0,t_1)$ such that $\tilde{R}_{t_*}=R(t_*)$, then we have $\dot{\tilde{R}}_{t_*}\geq \dot{R}(t_*)$. But according to  (\ref{22}) and (\ref{26}), we have $\dot{\tilde{R}}_{t_*}> \dot{R}(t_*)$, in contradiction of $t_*$ as the first time at which the graph of $t\mapsto\tilde{R}_t$ crosses the one of $t\mapsto R(t)$. This proves the claim \eqref{claim}.

    When $\varepsilon$ is small enough we have $g_\varepsilon(t)\leq 2\tilde{C}'\varepsilon^{\frac{1}{2}}$ and thus
    \begin{equation*}
        \dot{R}(t)\leq 3\tilde{C}'+\tilde{C}'\rho^{-2}R(t)\quad \forall t\in[t_0,t_1].
    \end{equation*}
    Notice $\rho\leq \overline{d}$, by adjust the dependency relationship of $\tilde{C}'$ on $\overline{d}$, we may assume that
    \begin{equation*}
        \dot{R}(t)\leq 3\rho^{-2}\tilde{C}'+\rho^{-2}\tilde{C}'R(t)\quad \forall t\in[t_0,t_1].
    \end{equation*}
    Then we get
    \begin{equation*}
        R(t_1)\leq e^{\tilde{C}'\rho^{-2}(t_1-t_0)}R(t_0)+3\left(e^{\tilde{C}'\rho^{-2}(t_1-t_0)}-1\right),
    \end{equation*}
    which implies
    \begin{equation}
        t_1-t_0\geq \frac{\rho^2}{\tilde{C}'}\ln \frac{\tilde{C}'R(t_1)+3\tilde{C}'\rho^2}{\tilde{C}'R(t_0)+3\tilde{C}'\rho^2}.
    \end{equation}
    Therefore, as $R(t_1)\geq c_0\rho/2$ and $R(t_0)= c_0\rho/4$, the claim follows with
    \begin{equation}\label{32}
        T_\rho'=\frac{\rho^2}{\tilde{C}'}\ln \frac{\tilde{C}'c_0\rho/2+3\tilde{C}'\rho^2}{\tilde{C}'c_0\rho/4+3\tilde{C}'\rho^2}\wedge\overline{T}_3=\frac{\rho^2}{a'}\ln \frac{2b+C\rho}{b+C\rho}\wedge\overline{T}_3,
    \end{equation}
    with positive constants $a'=a'(|x_0|,h,a_i)$ , $b=b(|x_0|,h,a_i)$ and
    \begin{equation}\label{33}
        \overline{T}_3=\frac{\rho^2R}{\tilde{C}_A(R+\rho^2)}e^{-5}\wedge T.
    \end{equation}
    The proposition is thus proved.
\end{proof}

\subsection{Conclusion of the proof of Theorem \ref{thm-main}}
In this subsection we finish the proof of Theorem \ref{thm-main}. Let  $q^{i,\varepsilon}(t)$ be as in Proposition \ref{11} and $P_i(t)$ be as in Theorem \ref{thm-main}. Denote  $\overline{P}_i(t)=x_0+\frac{P_i(t)}{|\ln \varepsilon|}$. To prove Theorem \ref{thm-main} we only need to show  that the following conclusions are true
\begin{equation} \label{29}  \lim_{\varepsilon\xrightarrow{}0}\max_{i\in\{1,\ldots,N\}}\max_{t\in[0,T_\varepsilon]}|\ln \varepsilon||B^{i,\varepsilon}(t)-\overline{P}_i(t)|=0,
\end{equation}
\begin{equation}\label{28}   \lim_{\varepsilon\xrightarrow{}0}\max_{i\in\{1,\ldots,N\}}\max_{t\in[0,T_\varepsilon]}|\ln \varepsilon||B^{i,\varepsilon}(t)-q^{i,\varepsilon}(t)|=0.
\end{equation}
 Indeed, if we choose $R=R_\varepsilon=e^{\sqrt{|\ln \varepsilon|}}$ in Proposition \ref{11} and combine the result of Proposition \ref{27}, according to the definition of $T'_\rho$, $T_\varepsilon$, the statement of Proposition \ref{11} is proved with $T_\rho=T'_\rho$, $P^\varepsilon_i(t)=q^{i,\varepsilon}(t)$, and $\rho_\varepsilon=\varepsilon R_\varepsilon$.
\begin{proof}
    We prove  (\ref{28}) first. Choosing $R=R_\varepsilon=e^{\sqrt{|\ln \varepsilon|}}$ in Proposition \ref{11}, we use  (\ref{10}) to compute
    \begin{equation*}
        \begin{split}
            |B^{i,\varepsilon}(t)-q^{i,\varepsilon}(t)|&\leq\frac{1}{\gamma_i}\int|x-q^{i,\varepsilon}(t)|\omega_{i,\varepsilon}(x,t)dx\\
            &\leq\frac{\varepsilon R_\varepsilon}{|\ln \varepsilon|}+\frac{1}{\gamma_i}\int_{B(q^{i,\varepsilon}(t),\frac{\varepsilon R_\varepsilon}{|\ln \varepsilon|})^\complement}|x-q^{i,\varepsilon}(t)|\omega_{i,\varepsilon}(x,t)dx\\
            &\leq\frac{\varepsilon R_\varepsilon}{|\ln \varepsilon|}+\frac{\overline{C}_4\ln |\ln \varepsilon|}{|\gamma_i||\ln \varepsilon|^{1+b}\sqrt{|\ln \varepsilon|}}|B^{i,\varepsilon}(t)-q^{i,\varepsilon}(t)|\\
            &+\frac{1}{\gamma_i}\int_{B(q^{i,\varepsilon}(t),\frac{\varepsilon R_\varepsilon}{|\ln \varepsilon|})^\complement}|x-B^{i,\varepsilon}(t)|\omega_{i,\varepsilon}(x,t)dx.
        \end{split}
    \end{equation*}
    Taking $\varepsilon$ sufficiently small such  that $\frac{\overline{C}_4\ln |\ln \varepsilon|}{|a_i|\sqrt{|\ln \varepsilon|}}\leq \frac{1}{2}$ and using H\"older's inequality, we then obtain
    \begin{equation*}
        |B^{i,\varepsilon}(t)-q^{i,\varepsilon}(t)|\leq\frac{2\varepsilon R_\varepsilon}{|\ln \varepsilon|}+2\sqrt{\frac{J_{i,\varepsilon}(t)}{|\gamma_i|}},
    \end{equation*}
    which, combined with (\ref{15}), gives
    \begin{equation}
        |\ln \varepsilon||B^{i,\varepsilon}(t)-q^{i,\varepsilon}(t)|\leq2\varepsilon e^{\sqrt{|\ln \varepsilon|}}+\frac{1}{|\ln \varepsilon|^{\frac{\sigma}{2}}}.
    \end{equation}
    This proves (\ref{28}).

    \medskip

    Next we prove  (\ref{29}). Denote $\tilde{z}=DT(x_0)z$ for any $z\in\mathbb{R}^2$. Clearly,  $|\tilde{z}|\leq C_0|z|$. Let
    \begin{equation*}
        \begin{split}
            &\Delta(t):=\sum_i|B^{i,\varepsilon}(t)-\overline{P}_i(t)|^2,\\
            &\tilde{\Delta}(t):=\sum_i|\tilde{B}^{i,\varepsilon}(t)-\tilde{\overline{P}}_i(t)|^2,\quad \forall t\in[0,T_\varepsilon].
        \end{split}
    \end{equation*}
    Recall that the point $\tilde P_i$ solves the following ODE:
    \begin{equation*}
        \dot{ \tilde{P}}_i=A\sum_{j\neq i}a_j\frac{(\tilde{P}_i-\tilde{P}_j)^\perp}{|\tilde{P}_i-\tilde{P}_j|^2}-a_iB\begin{pmatrix}
        0\\1
    \end{pmatrix},\quad i=1,...,N,
    \end{equation*}
    with constants $A$ and $B$ defined in \eqref{def ab}.
    By the definition of \eqref{center}, one has
    \begin{equation*}
        \dot{\tilde{B}}^{i,\varepsilon}(t)=\frac{DT(x_0)}{|\ln\varepsilon|^{1-b}\gamma_i}\int(v^i_K(x,t)+v^i_L(x,t)+v^i_S(x,t)+F^i(x,t))\omega_{i,\varepsilon}(x,t)dx.
    \end{equation*}
    It is clear that $\Delta(t)\leq C_0^2\tilde{\Delta}(t)$ and
    \begin{equation*}
    \begin{split}
        \dot{\tilde{\Delta}}(t)&=2\sum_i(\tilde{B}^{i,\varepsilon}(t)-\tilde{\overline{P}}_i(t))\cdot(\dot{\tilde{B}}^{i,\varepsilon}(t)-\frac{\dot{\tilde{P}}_i(t)}{|\ln \varepsilon|})\\
        &=2\sum^5_{\ell=1}\sum_i(\tilde{B}^{i,\varepsilon}(t)-\tilde{\overline{P}}_i(t))\cdot D^i_\ell(t),
    \end{split}
    \end{equation*}
    where
    \begin{align*}
            D^i_1=&\frac{DT(x_0)}{|\ln\varepsilon|^{1-b}\gamma_i}\sum_{j\neq i}\int v^j_R(R,t)\omega_{i,\varepsilon}(x,t)dx,\\
            D^i_2=&\frac{DT(x_0)}{|\ln\varepsilon|^{1-b}\gamma_i}\sum_{j\neq i}\int v^j_L(R,t)\omega_{i,\varepsilon}(x,t)dx\\
            =&\frac{DT(x_0)}{|\ln\varepsilon|^{1-b}\gamma_i}\sum_{j\neq i}\iint\frac{x^\perp}{2|X|^2}H(x,y)\ln |T(x)-T(y)|\omega_{j,\varepsilon}(y,t)\omega_{i,\varepsilon}(x,t)dydx,\\
            D^i_3=&\frac{1}{|\ln\varepsilon|^{1-b}\gamma_i}\sum_{j\neq i}\iint\left(DT(x_0)H(x,y)DT(x)^*\frac{(T(x)-T(y))^\perp}{|T(x)-T(y)|^2}-\frac{A}{|\ln \varepsilon|}\frac{|\ln \varepsilon|^2(\tilde{P}_i-\tilde{P}_j)^\perp}{|\tilde{P}_i-\tilde{P}_j|^2}\right)\\
            &\omega_{j,\varepsilon}(y,t)\omega_{i,\varepsilon}(x,t)dydx,\\
            D^i_4=&\frac{DT(x_0)}{|\ln\varepsilon|^{1-b}\gamma_i}\iint H(x,y)DT(x)^*\frac{(T(x)-T(y))^\perp}{|T(x)-T(y)|^2}\omega_{i,\varepsilon}(y,t)\omega_{i,\varepsilon}(x,t)dydx,\\
            D^i_5=&\frac{DT(x_0)}{|\ln\varepsilon|^{1-b}\gamma_i}\int v^i_L(R,t)\omega_{i,\varepsilon}(x,t)dx+\frac{1}{|\ln \varepsilon|}\frac{a_i\tau(|x_0|^2)|x_0|}{4\pi h\sqrt{h^2+|x_0|^2}}\begin{pmatrix}
        0\\1
    \end{pmatrix}.
    \end{align*}
    By the estimates of $v^j_R$, there exists a positive $\overline{C}_R=\overline{C}_R(|x_0|,h,a_i)$ such that $|DT(x_0)\nabla^\perp S_{K} (x,y)|\leq \overline{C}_R$ for any $x,y\in \cup_i\Lambda_{i,\varepsilon}(t)$, then we have
    \begin{equation}
        \left|\sum_i(\tilde{B}^{i,\varepsilon}(t)-\tilde{\overline{P}}_i(t))\cdot D^i_1(t)\right|\leq \frac{\overline{C}_R}{|\ln \varepsilon|^2}\sqrt{\tilde{\Delta}(t)}.
    \end{equation}
    Notice that there exists a positive $\overline{C}_L=\overline{C}_L(|x_0|,h,a_i)$ such that $|H(x,y)\ln |T(x)-T(y)||\leq \overline{C}_L\ln |x-y|^{-1}$ for any $x\in\Lambda_{i,\varepsilon}(t)$ and $y\in\Lambda_{j,\varepsilon}(t)$, $i\neq j$, we have
    \begin{equation*}
        \begin{split}
           \left |\int H(x,y)\ln |T(x)-T(y)|\omega_{j,\varepsilon}(y,t)dy\right|&\leq\overline{C}_L\int \ln \frac{|\ln \varepsilon|}{2\rho}|\omega_{j,\varepsilon}(y,t)|dy\\
            &\leq\frac{\overline{C}_L|a_i|\ln |\ln \varepsilon|}{|\ln \varepsilon|^{1+b}}+\frac{\overline{C}_L|a_i||\ln 2\rho|}{|\ln \varepsilon|^{1+b}}.
        \end{split}
    \end{equation*}
    So we get
    \begin{equation}
        \left|\sum_i(\tilde{B}^{i,\varepsilon}(t)-\tilde{\overline{P}}_i(t))\cdot D^i_2(t)\right|\leq\frac{2\overline{C}_L|a_i|\ln |\ln \varepsilon|}{|\ln \varepsilon|^2}\sqrt{\tilde{\Delta}(t)}.
    \end{equation}
    As for $D^i_4$, similar to the proof of Lemma \ref{25}, by symmetry we have
    \begin{equation*}
        |D^i_4|\leq \frac{C'_{H,D}|a_i|}{|\ln \varepsilon|^2},
    \end{equation*}
    then
    \begin{equation}
        \left|\sum_i(\tilde{B}^{i,\varepsilon}(t)-\tilde{\overline{P}}_i(t))\cdot D^i_4(t)\right|\leq \frac{C'_{H,D}|a_i|}{|\ln \varepsilon|^2}\sqrt{\tilde{\Delta}(t)}.
    \end{equation}
    In order to estimate the term containing $D^i_5(t)$, without loss of generality, we assume that $a_i>0$ and denote $x=(x^1,x^2)$. If $x_0\neq0$, noticing that $\frac{|x^1|}{2|X|^2}\leq \frac{|x_0|}{|x_0|^2+h^2}$ for $\varepsilon$ small enough, by the initial condition and the definition of $T_\varepsilon$, we get the following upper bounds
    \begin{equation*}
        \begin{split}
            &-\frac{\tau(r_0^2)}{|\ln\varepsilon|^{1-b}\gamma_i}\iint\frac{x^1}{2|X|^2}H(x,y)\ln |T(x)-T(y)|\omega_{j,\varepsilon}(y,t)\omega_{i,\varepsilon}(x,t)dydx\\
            \leq&\frac{\tau(r_0^2)r_0}{|\ln\varepsilon|^{1-b}\gamma_i(r_0^2+h^2)}\iint H(x_0,x_0)\ln |C_T(x-y)|^{-1}\omega_{j,\varepsilon}(y,t)\omega_{i,\varepsilon}(x,t)dydx+\frac{\overline{C}_5}{|\ln \varepsilon|^2}\\
            \leq&\frac{a_i\tau(r_0^2)r_0}{4\pi h|\ln \varepsilon|\sqrt{r_0^2+h^2}}+\frac{\tilde{C}_5}{|\ln \varepsilon|^2},
        \end{split}
    \end{equation*}
    and\begin{equation*}
        \begin{split}
            &\left|\frac{1}{\gamma_i}\left(\tau(r_0^2)+\frac{r_0^2}{h^2+h\sqrt{h^2+r_0^2}}\right)\iint\frac{x^2}{2|X|^2}H(x,y)\ln |T(x)-T(y)|\omega_{j,\varepsilon}(y,t)\omega_{i,\varepsilon}(x,t)dydx\right|\\
            \leq&\frac{C'_5}{|\ln \varepsilon|^{1+b}}.
        \end{split}
    \end{equation*}
    On the other hand, by choosing $R_\varepsilon=\exp(\sqrt{|\ln \varepsilon|\ln |\ln \varepsilon|})$, we have
    \begin{equation*}
        \begin{split}
           &- \frac{\tau(r_0^2)}{\gamma_i}\iint\frac{x^1}{2|X|^2}H(x,y)\ln |T(x)-T(y)|\omega_{j,\varepsilon}(y,t)\omega_{i,\varepsilon}(x,t)dydx\\
           \geq&- \frac{\tau(r_0^2)}{\gamma_i}\ln \frac{2C'_T\varepsilon R_\varepsilon}{|\ln \varepsilon|^\frac{1+b}{2}}\iint_{B\left(q^{i,\varepsilon}(t),\frac{\varepsilon}{|\ln \varepsilon|}R_\varepsilon\right)^2}\frac{x^1}{2|X|^2}H(x,y)\omega_{j,\varepsilon}(y,t)\omega_{i,\varepsilon}(x,t)dydx\\
           \geq&-\frac{r_0\tau(r_0^2)\ln (\varepsilon R_\varepsilon)}{2(r_0^2+h^2)\gamma_i}\frac{\sqrt{r_0^2+h^2}}{2\pi h}\left(\int_{B\left(q^{i,\varepsilon}(t),\frac{\varepsilon}{|\ln \varepsilon|^{(1+b)/2}}R_\varepsilon\right)}\omega_{j,\varepsilon}(y,t)dy\right)^2-\frac{\ln |\ln \varepsilon|C^*_L}{|\ln \varepsilon|^{1+b}}.
        \end{split}
    \end{equation*}
    Taking $\varepsilon$ small enough such that $R_\varepsilon>\exp(C_3\ln |\ln \varepsilon|)$, then by applying Proposition \ref{11}, we get
    \begin{equation*}
        \int_{B\left(q^{i,\varepsilon}(t),\frac{\varepsilon}{|\ln \varepsilon|^{(1+b)/2}}R_\varepsilon\right)}\omega_{j,\varepsilon}(y,t)dy\geq \gamma_i-\frac{C_4}{|\ln \varepsilon|^{1+b}}\sqrt{\frac{\ln |\ln \varepsilon|}{|\ln \varepsilon|}},\quad \forall t\in[0,T_\varepsilon].
    \end{equation*}
     Since $\left|\frac{1}{|\ln \varepsilon|}\ln \frac{1}{\varepsilon R_\varepsilon}-1\right|\leq \frac{\ln R_\varepsilon}{|\ln \varepsilon|}=\sqrt{\frac{\ln |\ln \varepsilon|}{|\ln \varepsilon|}}$, we obtain
    \begin{equation*}
        \begin{split}
            &- \frac{\tau(r_0^2)}{|\ln\varepsilon|^{1-b}\gamma_i}\iint\frac{x^1}{2|X|^2}H(x,y)\ln |T(x)-T(y)|\omega_{j,\varepsilon}(y,t)\omega_{i,\varepsilon}(x,t)dydx\\
            \geq&\frac{a_i\tau(r_0^2)r_0}{4\pi h|\ln \varepsilon|\sqrt{r_0^2+h^2}}-\frac{C^*_L\sqrt{|\ln \varepsilon|\ln |\ln \varepsilon|}}{|\ln \varepsilon|^2}.
        \end{split}
    \end{equation*}
    If $x_0=0$, then $B=0$, and $D^i_5=\frac{I_2}{\gamma_i}\int v^i_L(R,t)\omega_{i,\varepsilon}(x,t)dx$, by (\ref{3-4}) we have $\frac{|x^1|}{2|X|^2}\leq \frac{\overline{d}+\rho}{2h^2|ln\varepsilon|}$, and
    \begin{equation*}
        \left|\frac{1}{\gamma_i}\iint\frac{x^1}{2|X|^2}H(x,y)\ln |T(x)-T(y)|\omega_{i,\varepsilon}(y,t)\omega_{i,\varepsilon}(x,t)dydx\right|\leq\frac{\tilde{C}_5'}{|ln\varepsilon|^{1+b}}.
    \end{equation*}
    Combining the upper bound estimate, we get
    \begin{equation*}
        |D^i_5(t)|\leq\frac{C_{5,L}\sqrt{|\ln \varepsilon|\ln |\ln \varepsilon|}}{|\ln \varepsilon|^2},
    \end{equation*}
    which implies that
    \begin{equation}
        \left|\sum_i(\tilde{B}^{i,\varepsilon}(t)-\tilde{\overline{P}}_i(t))\cdot D^i_5(t)\right|\leq \frac{C_{5,L}\sqrt{|\ln \varepsilon|\ln |\ln \varepsilon|}}{|\ln \varepsilon|^2}\sqrt{\tilde{\Delta}(t)}.
    \end{equation}

    For the term involving $D^i_3(t)$, direct computations give
    \begin{align*}
            |D^i_3(t)|\leq&\frac{1}{|\ln\varepsilon|^{1-b}\gamma_i}\sum_{j\neq i}\iint\left|DT(x_0)H(x_0,x_0)DT(x_0)^*\frac{(T(x)-T(y))^\perp}{|T(x)-T(y)|^2}-A\frac{|\ln \varepsilon|(\tilde{P}_i-\tilde{P}_j)^\perp}{|\tilde{P}_i-\tilde{P}_j|^2}\right|\\
             &\times|\omega_{j,\varepsilon}(y,t)||\omega_{i,\varepsilon}(x,t)|dydx+\frac{C^*_3}{|\ln \varepsilon|^2}\\
            \leq&\left|\frac{1}{|\ln\varepsilon|^{1-b}\gamma_i}\sum_{j\neq i}\iint A\left(\frac{(\tilde{x}-\tilde{y})^\perp}{|\tilde{x}-\tilde{y}|^2}-\frac{(\tilde{\overline{P}}_i-\tilde{\overline{P}}_j)^\perp}{|\tilde{\overline{P}}_i-\tilde{\overline{P}}_j|^2}\right)\omega_{j,\varepsilon}(y,t)\omega_{i,\varepsilon}(x,t)dydx\right|+\frac{C^*_3}{|\ln\varepsilon|^2}\\
            \leq&\left|\frac{|\ln \varepsilon|^{2+2b}}{a_i\rho^2}\sum_{j\neq i}\iint A\left(|\tilde{x}-\tilde{\overline{P}}_i|+|\tilde{y}-\tilde{\overline{P}}_j|\right)\omega_{j,\varepsilon}(y,t)\omega_{i,\varepsilon}(x,t)dydx\right|+\frac{C^*_3}{|\ln\varepsilon|^2}\\
            \leq& \frac{|\ln \varepsilon|^{1+b}}{\rho^2}A\sum_{j\neq i}|\gamma_j|\left(|\tilde{B}^{i,\varepsilon}(t)-\tilde{\overline{P}}_i|+|\tilde{B}^{j,\varepsilon}(t)-\tilde{\overline{P}}_j|+C_{D,T}\sqrt{\frac{J_{i,\varepsilon}(t)}{|\gamma_i|}}+C_{D,T}\sqrt{\frac{J_{j,\varepsilon}(t)}{|\gamma_j|}}\right)\\
            &+\frac{C^*_3}{|\ln\varepsilon|^2}.
    \end{align*}
    Thus using (\ref{15}), we get
    \begin{equation}
        \left|\sum_i(\tilde{B}^{i,\varepsilon}(t)-\tilde{\overline{P}}_i(t))\cdot D^i_3(t)\right|\leq\frac{\tilde{C}_3a}{\rho^2}\left(\tilde{\Delta}(t)+\frac{1}{|\ln \varepsilon|^{1+\frac{\sigma}{2}}}\sqrt{\tilde{\Delta}(t)}\right).
    \end{equation}
    Finally, letting $\overline{\Delta}(t):=|\ln \varepsilon|^2\tilde{\Delta}(t)$, then combining all the estimates above, we can find positive $C_\alpha=C_\alpha(|x_0|,h,a_i,\rho,\overline{d})$ such that
    \begin{equation}
        \dot{\overline{\Delta}}(t)\leq C_\alpha\overline{\Delta}(t)+C_\alpha\left(\frac{1}{|\ln \varepsilon|^\frac{\sigma}{2}}+\frac{1}{|\ln \varepsilon|}+\frac{\sqrt{|\ln \varepsilon| \ln |\ln \varepsilon|}}{|\ln \varepsilon|}\right)\sqrt{\overline{\Delta}(t)},\quad \forall t\in[0,T_\varepsilon].
    \end{equation}
    According to the initial condition we have $\overline{\Delta}(0)\leq4NC_{D,T}\varepsilon^2$. Then (\ref{29}) follows immediately from the above differential inequality.

\end{proof}

\section{An example of helical vortices' leapfrogging phenomenon}\label{leapfrogging}

Now let us consider the case where there are two helices only. We describe the dynamical system  \eqref{ds} for $N=2$ and suppose that $\gamma_1+\gamma_2=\frac{a_1+a_2}{|\ln \varepsilon|^2}\neq 0$. Adopting  new variables,
\begin{equation*}
    x=\tilde{P}_1-\tilde{P}_2,\quad y=\frac{a_1\tilde{P}_1+a_2\tilde{P}_2}{a_1+a_2},
\end{equation*}
we then derive an equivalent system
\begin{equation}\label{31}
    \left\{
\begin{array}{ll}
\dot{x}=A(a_1+a_2)\nabla^\perp \ln |x|-\frac{(a_1-a_2)\tau(r_0^2)r_0}{4\pi h\sqrt{r_0^2+h^2}}\begin{pmatrix}
        0\\1
    \end{pmatrix},\\
    \dot{y}=-\frac{(a_1^2+a_2^2)\tau(r_0^2)r_0}{4\pi h(a_1+a_2)\sqrt{r_0^2+h^2}}\begin{pmatrix}
        0\\1
    \end{pmatrix}.
\end{array}
\right.
\end{equation}
We furthermore assume that $a_1>|a_2|$, and we notice that $x$ is governed by the canonical system $\dot{x}=\nabla^\perp\mathcal{H}(x)$ of Hamiltonian
\begin{equation}
\begin{split}
    \mathcal{H}(x)&=(a_1+a_2)A\ln |x|-\frac{(a_1-a_2)\tau(r_0^2)r_0}{4\pi h\sqrt{r_0^2+h^2}}x_1\\
    &=\frac{a_1+a_2}{4\pi}A_1\ln |x|^2-\frac{a_1-a_2}{4\pi}B_1x_1,\quad x=(x_1,x_2),
\end{split}
\end{equation}
with constants $A_1=\frac{\tau^2(r_0^2)\left(h^2+r_0^2+\sqrt{h^2+r_0^2}\right)\sqrt{h^2+r_0^2}}{h\left(h^2+h\sqrt{h^2+r_0^2}\right)}$ and $B_1=\frac{\tau(r_0^2)r_0}{h\sqrt{h^2+r_0^2}}$.
If $x_0\neq0$, this Hamiltonian system has a unique equilibrium, corresponding to the critical point $x^*=(2a',0)$, where
\begin{equation*}
    a'=\frac{(a_1+a_2)A_1}{(a_1-a_2)B_1}.
\end{equation*}
The phase portrait of this Hamiltonian system can be described by the energy level sets $\{x:\mathcal{H}(x)=E\}$. Then we recast the equation $\mathcal{H}(x)=E$
\begin{equation}\label{36}
    \begin{split}
        x_2=\pm f(x_1),\quad \text{with}\quad f(x_1)=\sqrt{C_E\exp\left(\frac{x_1}{a'}\right)-x^2_1}
    \end{split}
\end{equation}
where
\begin{equation*}
    C_E=\exp\left(\frac{4\pi E}{(a_1+a_2)A_1}\right).
\end{equation*}
Let
\begin{equation*}
    C^*=\exp\left(\frac{4\pi \mathcal{H}(x^*)}{(a_1+a_2)A_1}\right)=\left(\frac{2a'}{e}\right)^2.
\end{equation*}
By direct calculation we can see that
\begin{equation}
        \text{Dom}(f):=\{x_1:|x_1|\leq\sqrt{C_E}\exp(x_1/2a')\}=\left\{
\begin{array}{ll}
\left[\eta_1,\eta_2\right]\cup\left[\eta_3,+\infty\right) &if\quad C_E<C^*,\\
\left[\overline{\eta},+\infty\right)\quad &if\quad C_E\geq C^*,
\end{array}
\right.
    \end{equation}
with $\eta_1<0<\eta_2<x^*<\eta_3$ and $\overline{\eta}<0$. Notice that the periodic motions occurring for $0<C_E<C^*$ (Whose orbits are the closed curves). We see that
\begin{equation*}
    \dot{x_1}=-\frac{(a_1+a_2)A_1}{2\pi}\frac{x_2}{|x|^2}=\pm\frac{(a_1+a_2)A_1e^{-x_1/a'}}{2\pi C_E}\sqrt{C_Ee^{x_1/a'}-x_1^2},
\end{equation*}
then the period of a closed orbit on the level $C_E<C^*$ is given by
\begin{equation*}
    T_E=2\int^{\eta_2}_{\eta_1}\frac{dx_1}{|\dot{x_1}|}=\frac{4\pi C_E}{(a_1+a_2)A_1}\int^{\eta_2}_{\eta_1}\frac{e^{x_1/a'}}{\sqrt{C_Ee^{x_1/a'}-x_1^2}}dx_1,
\end{equation*}
with $\eta_1<0<\eta_2$ are the two smallest roots of the equation $C_Ee^{x_1/a'}-x_1^2=0$. Notice that for small positive $C_E$, we have $\eta_{1,2}\approx\mp\sqrt{C_E}$, and
\begin{equation}\label{34}
    T_E\approx\frac{4\pi C_E}{(a_1+a_2)A_1}\int^{\sqrt{C_E}}_{-\sqrt{C_E}}\frac{dx_1}{\sqrt{C_E-x_1^2}}=\frac{4\pi^2 C_E}{(a_1+a_2)A_1},
\end{equation}
which goes to $0$ as $C_E\to 0$(i.e., $E\to-\infty$, or say $x\to 0$).

If $x_0=0$, then $B_1=0$ and $\mathcal{H}(x)=\frac{a_1+a_2}{4\pi}A_1\ln |x|^2$, whose orbits are closed curves of any $E\in\mathbb{R}$, and for small $C_E=\exp\left(\frac{4\pi E}{(a_1+a_2)A_1}\right)$, the period of a close orbit still satisfies
\begin{equation*}
    T_E\approx\frac{4\pi^2 C_E}{(a_1+a_2)A_1}.
\end{equation*}

Now assume that $|P^0_1-P^0_2|=4\rho_0/n$, with $\rho_0>0$ and $n\in \mathbb{N}$, then $\rho\leq \rho_0/n$, assuming $\rho=\beta \rho_0/n$ with $\beta\in(0,1)$.
\begin{lemma}\label{35}
    Fix $\beta\in\left(0,\frac{h^2+h\sqrt{h^2+r_0^2}}{r_0^2+h^2+h\sqrt{h^2+r_0^2}}\right)$, then for any $k\in\mathbb{N}$, let $T=(k+1)T_E$, there exists $N_0>0$ such that if $n>N_0$ then $C_E<C^*$ and the solution $x_E(t)$ to  (\ref{31}) satisfies
    \begin{equation}\label{5-9}
        \min_{t\in[0,T]}|DT(x_0)^{-1}x_E(t)|=\min_{t\in[0,T]}|P_1(t)-P_2(t)|\geq 4\rho.
    \end{equation}
\end{lemma}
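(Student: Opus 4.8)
The plan is to exploit the fully explicit Hamiltonian structure of the $x$-component of \eqref{31}: in the regime $|P_1^0-P_2^0|\sim n^{-1}\to 0$ the orbit of $x_E$ through the initial point is a tiny closed curve that stays nearly a circle centred at the origin, so its minimal distance from $0$ remains close to its initial value.

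First I would record the data of the reduced system. Since $\tilde P_i^0=DT(x_0)P_i^0$, we have $x_E(0)=\tilde P_1^0-\tilde P_2^0=DT(x_0)(P_1^0-P_2^0)$, hence by \eqref{5-1}
\begin{equation*}
	c_0\,\frac{4\rho_0}{n}\le|x_E(0)|\le C_0\,\frac{4\rho_0}{n},
\end{equation*}
so $|x_E(0)|\to 0$ as $n\to\infty$. Writing $x_E(0)=(x_1(0),x_2(0))$ and inserting this into $\mathcal H$ and into the definition of $C_E$, a direct computation gives $C_E=|x_E(0)|^2 e^{-x_1(0)/a'}=\Theta(n^{-2})\to 0$. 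When $r_0>0$ this yields $C_E<C^*=(2a'/e)^2$ for $n$ large; moreover $|x_1(0)|\le|x_E(0)|\to0$ while $\eta_3>2a'$ is a fixed positive number, so $x_1(0)\in[\eta_1,\eta_2]$ and hence $x_E(0)$ lies on the bounded closed component of $\{\mathcal H=E\}$, along which $t\mapsto x_E(t)$ runs periodically with period $T_E$. (If $r_0=0$ then $B_1=0$, $DT(x_0)=I_2$ and every level set of $\mathcal H$ is a circle, so $|x_E(t)|\equiv|x_E(0)|=|P_1^0-P_2^0|=4\rho_0/n\ge 4\beta\rho_0/n=4\rho$ as $\beta<1$, and the lemma is immediate; from now on I assume $r_0>0$.)

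The key step is the identity, read off from \eqref{36}, that along $\{\mathcal H=E\}$ one has $|x|^2=x_1^2+f(x_1)^2=C_E e^{x_1/a'}$ with $x_1\in[\eta_1,\eta_2]$, so $|x|$ is a monotone function of $x_1$ on the closed orbit. Since $T=(k+1)T_E\ge T_E$, the trajectory sweeps out the whole orbit, hence
\begin{equation*}
	\min_{t\in[0,T]}|x_E(t)|^2=C_E e^{\eta_1/a'}=|x_E(0)|^2\,e^{(\eta_1-x_1(0))/a'}\ge|x_E(0)|^2\,e^{-(\eta_2-\eta_1)/a'}.
\end{equation*}
It remains to bound $\eta_2-\eta_1$: for $x_1=\pm 2\sqrt{C_E}$ and $C_E$ small we have $e^{x_1/a'}<4$, hence $C_E e^{x_1/a'}<4C_E=x_1^2$, so $\pm 2\sqrt{C_E}\notin[\eta_1,\eta_2]$, which forces $\eta_2-\eta_1<4\sqrt{C_E}=O(n^{-1})$. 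Therefore $\min_{t\in[0,T]}|x_E(t)|\ge|x_E(0)|\bigl(1-O(n^{-1})\bigr)$.

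Finally, using $P_1(t)-P_2(t)=DT(x_0)^{-1}x_E(t)$ together with \eqref{5-1},
\begin{equation*}
	\min_{t\in[0,T]}|P_1(t)-P_2(t)|\ge\frac{1}{C_0}\min_{t\in[0,T]}|x_E(t)|\ge\frac{c_0}{C_0}\,\frac{4\rho_0}{n}\bigl(1-O(n^{-1})\bigr).
\end{equation*}
The decisive point is the algebraic identity $c_0/C_0=\bigl(h^2+h\sqrt{h^2+r_0^2}\bigr)/\bigl(r_0^2+h^2+h\sqrt{h^2+r_0^2}\bigr)$, which is exactly the right endpoint of the interval from which $\beta$ is chosen; since $\beta<c_0/C_0$ strictly, one can fix $N_0$ so large that for all $n>N_0$ the $O(n^{-1})$ correction is absorbed, giving $\frac{c_0}{C_0}\bigl(1-O(n^{-1})\bigr)\ge\beta$ and hence $\min_{[0,T]}|P_1(t)-P_2(t)|\ge 4\beta\rho_0/n=4\rho$. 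I expect the only genuinely quantitative bookkeeping to be making the small-$C_E$ estimates (the length $\eta_2-\eta_1$ and the factor $e^{-x_1(0)/a'}$) explicit in $n$; the structural heart of the argument — the level-set identity $|x|^2=C_E e^{x_1/a'}$ and the matching of the distortion constant $c_0/C_0$ with the admissible range of $\beta$ — requires no real work.
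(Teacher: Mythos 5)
Your proof is correct and is precisely the detailed realization of the paper's one-line proof, which merely invokes ``standard arguments in the theory of ordinary differential equations, and combining (\ref{36}).'' The level-set identity $|x|^2=C_Ee^{x_1/a'}$ from \eqref{36}, the small-$C_E$ localization $\eta_2-\eta_1<4\sqrt{C_E}=O(n^{-1})$, and the algebraic match $c_0/C_0=\bigl(h^2+h\sqrt{h^2+r_0^2}\bigr)/\bigl(r_0^2+h^2+h\sqrt{h^2+r_0^2}\bigr)$ with the admissible range of $\beta$ are exactly the content the paper compresses into that sentence, and you have supplied them correctly (including the degenerate $r_0=0$ case, for which the remark following the lemma confirms your observation that $\beta$ may range over $(0,1)$).
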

\begin{proof}
   The proof follows by standard arguments in the theory of ordinary differential equations, and combining (\ref{36}).
\end{proof}

Now we are ready to complete our proof of Theorem \ref{thm-leapfrogging}.
\begin{proof} Recalling that when $n$ is large enough, by (\ref{34}) we have $T_E\sim 1/n^2$. We follow the idea used in \cite{BCM3}  to finish our proof by an iterative argument in the sequel.

\textbf {Step} 0. We choose $R=\rho_n:=\rho/n$ and $l=3$ in Proposition \ref{23}, then we get $\overline{T}_3$, and in accordance with the discussion in Proposition \ref{27}, there exists $T_0'\in(0,T]$ such that
\begin{equation}\label{5-6}
    \Lambda_{i,\ep}(t)\subset B\left(B^{i,\ep}(t),\frac{3C_0\rho_n}{|\ln\ep|}\right)\quad \forall t\in [0,T_0'\wedge T_\ep].
\end{equation}
In this case,
\begin{equation*}
    t_1:=\inf\{t\in[0,\overline{T}_3\wedge T_\ep]:\tilde{R}_s\leq 3c_0C_0\rho_n\quad\forall s\in[0,t]\},
\end{equation*}
and(whenever $t_1<\overline{T}_3\wedge T_\ep$)
\begin{equation*}
    t_0:=\inf\{t\in[0,t_1]:\tilde{R}_s>c_0C_0\rho_n\quad \forall s\in[t,t_1]\}.
\end{equation*}
Now we choose $R(t_0)=2c_0\rho_n$. By the same argument as in Proposition \ref{27} (notice that $1\leq C_0$, we have that $\dot{R}(t)\leq3\rho^{-2}C_0\tilde{C}'+\rho^{-2}\tilde{C}'R(t)$), we deduce that   (\ref{5-6}) holds with
\begin{equation*}
    \begin{split}
        T_0'&=\frac{\rho^2}{\tilde{C}'}\ln\left(\frac{3\tilde{C}'c_0C_0\rho_n+3C_0\tilde{C}'\rho^2}{2\tilde{C}'c_0C_0\rho_n+3C_0\tilde{C}'\rho^2}\right)\wedge\overline{T}_3\\
        &=\frac{\rho^2}{\tilde{C}'}\ln\left(\frac{3\tilde{C}'c_0\beta\rho_0+3\tilde{C}'\beta^2\rho_0^2}{2\tilde{C}'c_0\beta\rho_0+3\tilde{C}'\beta^2\rho_0^2}\right)\wedge\overline{T}_3,
    \end{split}
\end{equation*}
where
\begin{equation*}
    \overline{T}_3=\frac{1}{\tilde{C}_A}\cdot\frac{C_0\rho^2\rho_n}{\rho^2+C_0\rho_n}e^{-5}\wedge T=\frac{\rho^2}{\tilde{C}_A}\cdot\frac{C_0\beta\rho_0}{\beta^2\rho_0^2+C_0\beta\rho_0}e^{-5}\wedge T.
\end{equation*}

\textbf {Step} 1. If $T_0'=T$ we are done, otherwise, from Step $0$ and   (\ref{29}), (\ref{28}) we have $T_\ep>T_0'$ for any $\ep$ small enough, and whence
\begin{equation*}
    \Lambda_{i,\ep}(T_0')\subset B\left(B^{i,\ep}(T_0'),\frac{3C_0\rho_n}{|\ln\ep|}\right).
\end{equation*}
Then we will repeat the arguments in Proposition \ref{20} and Proposition \ref{23} to deduce that, for any $\ep$ small enough, we have
\begin{equation}\label{5-7}
    m^i_t(4C_0^2\rho_n)\leq \varepsilon^l\quad\forall t\in[T_0',(T_0'+\overline{T}_l)\wedge T_\ep].
\end{equation}

We argue in the following way.

(i) First we follow the proof of Proposition \ref{20}, with $C_0^2(R-\rho_n/4)$ in place of $C_0R/2$ in the computation leading to   (\ref{17}), and iterate   (\ref{4-23}) from $C_0^2(3\rho_n+\rho_n/2-h)$ to $C_0^2(3\rho_n+\rho_n/4)$, in this case, $R=C_0^2(3\rho_n+\rho_n/2)$ and $h=C_0^2\rho_n/(4n+4)$, then we have
\begin{equation*}
    m^i_t(C_0^2(3\rho_n+\rho_n/2))\leq |\ln\ep|^{-4}\quad \forall t\in[0,(T_0'+\tilde{T}_{3-b})\wedge T_\ep],
\end{equation*}
where
\begin{equation*}
    \tilde{T}_2=\frac{\rho^2}{{C}_A}\cdot\frac{C_0^2(3\rho_n+\rho_n/2)}{C_0'\rho^2+C_0^2(3\rho_n+\rho_n/2)}e^{b-4}\wedge(T-T_0').
\end{equation*}

(ii) Second, by (i), following the proof of Proposition  \ref{23}  and iterate now from $C_0^2(4\rho_n-h)$ to $C_0^2(4\rho_n-\rho_n/4)$, then   (\ref{5-7}) holds for $l>2$, and in this case
\begin{equation*}
    \begin{split}
        \overline{T}_l&=\frac{\rho^2}{\tilde{C}_A}\cdot\frac{4C_0^2\rho_n}{\rho^2+4C_0^2\rho_n}e^{-(l+2)}\wedge(T-T_0')\\
        &=\frac{\rho^2}{\tilde{C}_A}\cdot\frac{4C_0^2\beta\rho_0}{\beta^2\rho_0^2+4C_0^2\beta\rho_0}e^{-(l+2)}\wedge(T-T_0').
    \end{split}
\end{equation*}
Using   (\ref{5-7}), we can now adjust the arguments in Section 4 to prove that, for any $\ep$ small enough, we have
\begin{equation}\label{5-8}
    \Lambda_{i,\ep}(t)\subset B\left(B^{i,\ep}(t),\frac{6C_0^3\rho_n}{|\ln\ep|}\right)\quad \forall t\in [T_0',(T_0'+T_1')\wedge T_\ep],
\end{equation}
where $T_1'\in(0,T-T_0']$ will be given a clear expression later.

We carry out our discussion as following.

(i) First we can modify the claim of Lemma \ref{25} by replacing   (\ref{22}) with
\begin{equation*}
    \dot{\tilde{R}}_t\leq\tilde{C}'\left(1+\frac{\tilde{R}_t}{\rho^2}+\frac{2}{|\ln\ep|^\sigma(R_t\wedge C_0^3\rho_n)^3}+\frac{|\ln\varepsilon|^b\sqrt{m^i_t(R^n_t)}}{\ep}\right),
\end{equation*}
where $R^n_t=(R_t-C_0^3\rho_n/2)\vee(R_t/2)$. It only requires us to change  the integral region of $H_1$ and $H_2$ to $B(B^{i,\ep}(t),R^n_t/|\ln\ep|)$ and $B(B^{i,\ep}(t),R_t/|\ln\ep|)\setminus B(B^{i,\ep}(t),R^n_t/|\ln\ep|)$.

(ii) Second letting $\overline{T}_3$ be as in  (\ref{5-7}) for $l=3$, we prove   (\ref{5-8}) following the proof of Proposition \ref{27}. In this case,
\begin{equation*}
    t_1:=\sup\{t\in[T_0',(T_0'+\overline{T}_3)\wedge T_\ep]:\tilde{R}_s\leq 6c_0C_0^3\rho_n\quad \forall s\in[0,t]\}.
\end{equation*}
and(whenever $t_1<(T_0'+\overline{T}_3)\wedge T_\ep$)
\begin{equation*}
    t_0:=\inf\{t\in[T_0',t_1]:\tilde{R}_s>c_0C_0^3(4\rho_n+\rho/2)\quad \forall s\in[t,t_1]\}.
\end{equation*}
Notice that when $t\in[t_0,t_1]$, we have $m^i_t(R^n_t)\leq m^i_t(4C_0^2\rho_n)\leq \ep^3$ by   (\ref{5-7}). Now choosing $R(t_0)=5C_0^3\rho_n$ in   (\ref{26}) we deduce that   (\ref{5-8}) holds with
\begin{equation*}
    T_1'=\frac{\rho^2}{\tilde{C}'}\ln\left(\frac{6\tilde{C}'c_0\beta\rho_0+3\tilde{C}'\beta^2\rho_0^2}{5\tilde{C}'c_0\beta\rho_0+3\tilde{C}'\beta^2\rho_0^2}\right)\wedge \overline{T}_3,
\end{equation*}
where
\begin{equation*}
    \overline{T}_3=\frac{\rho^2}{\tilde{C}_A}\cdot\frac{4C_0^2\beta\rho_0}{\beta^2\rho_0^2+4C_0^2\beta\rho_0}e^{-5}\wedge(T-T_0').
\end{equation*}

\textbf {Step} 2. If $T_0'+T_1'=T$ we are done, otherwise from   (\ref{5-8}), (\ref{29}) and (\ref{28}) we have $T_\ep>T_0'+T_1'$ for any $\ep$ small enough, and whence
\begin{equation*}
    \Lambda_{i,\ep}(T_0'+T_1')\subset B\left(B^{i,\ep}(T_0'+T_1'),\frac{6C_0^3\rho_n}{|\ln\ep|}\right).
\end{equation*}
Therefore, through a discussion similar to that in Step $1$, we have that, for any $\ep$ small enough,
\begin{equation*}
    m^i_t(7C_0^4\rho_n)\leq \ep^l\quad \forall t\in[T_0'+T_1',(T_0'+T_1'+\overline{T}_l)\wedge T_\ep],
\end{equation*}
with
\begin{equation*}
    \overline{T}_l=\frac{\rho^2}{\tilde{C}_A}\cdot \frac{7C_0^4\beta\rho_0}{\beta^2\rho_0^2+7C_0^4\beta\rho_0}e^{-(l+2)}\wedge (T-T_0'-T_1'),
\end{equation*}
whence
\begin{equation*}
    \Lambda_{i,\ep}(t)\subset B\left(B^{i,\ep}(t),\frac{9C_0^5\rho_n}{|\ln\ep|}\right) \quad \forall t\in[T_0'+T_1',(T_0'+T_1'+T_2')\wedge T_\ep],
\end{equation*}
with
\begin{equation*}
    T_2'=\frac{\rho^2}{\tilde{C}'}\ln\left(\frac{9\tilde{C}'c_0\beta\rho_0+3\tilde{C}'\beta^2\rho_0^2}{8\tilde{C}'c_0\beta\rho_0+3\tilde{C}'\beta^2\rho_0^2}\right)\wedge \overline{T}_3,
\end{equation*}
and
\begin{equation*}
    \overline{T}_3=\frac{\rho^2}{\tilde{C}_A}\cdot\frac{7C_0^4\beta\rho_0}{\beta^2\rho_0^2+7C_0^4\beta\rho_0}e^{-5}\wedge(T-T_0'-T_1').
\end{equation*}

\textbf {Step $j$}. The above procedure can be iterated inductively in the following manner. If at the $(j-1)$th step we still have $T_0'+\cdots+T_{j+1}'<T$ (otherwise we are done) and $3jC_0^{2j-1}\rho_n\leq\rho/2$, then $T_\ep>T_0'+\cdots+T_{j-1}'$ for any $\ep$ small enough, so that
\begin{equation*}
    \Lambda_{i,\ep}(T_0'+\cdots+T_{j-1}')\subset B\left(B^{i,\ep}(T_0'+\cdots+T_{j-1}'),\frac{3jC_0^{2j-1}\rho_n}{|\ln\ep|}\right).
\end{equation*}
Now we can move on to the next iteration, giving first
\begin{equation*}
    m^i_t((3j+1)C_0^{2j}\rho_n)\leq \ep^l\quad \forall t\in[T_0'+\cdots+T_{j-1}',(T_0'+\cdots+T_{j-1}'+\overline{T}_l)\wedge T_\ep],
\end{equation*}
with
\begin{equation*}
    \overline{T}_l=\frac{\rho^2}{\tilde{C}_A}\cdot \frac{(3j+1)C_0^{2j}\beta\rho_0}{\beta^2\rho_0^2+(3j+1)C_0^{2j}\beta\rho_0}e^{-(l+2)}\wedge (T-(T_0'+\cdots+T_{j-1}')),
\end{equation*}
whence
\begin{equation*}
    \Lambda_{i,\ep}(t)\subset B\left(B^{i,\ep}(t),\frac{(3j+3)C_0^{2j+1}\rho_n}{|\ln\ep|}\right) \quad \forall t\in[T_0'+\cdots+T_{j-1}',(T_0'+\cdots+T_{j-1}')\wedge T_\ep],
\end{equation*}
with
\begin{equation*}
    T_j'=\frac{\rho^2}{\tilde{C}'}\ln\left(\frac{(3j+3)\tilde{C}'c_0\beta\rho_0+3\tilde{C}'\beta^2\rho_0^2}{(3j+2)\tilde{C}'c_0\beta\rho_0+3\tilde{C}'\beta^2\rho_0^2}\right)\wedge \overline{T}_3,
\end{equation*}
and
\begin{equation*}
    \overline{T}_3=\frac{\rho^2}{\tilde{C}_A}\cdot\frac{(3j+1)C_0^{2j}\beta\rho_0}{\beta^2\rho_0^2+(3j+1)C_0^{2j}\beta\rho_0}e^{-5}\wedge(T-(T_0'+\cdots+T_{j-1}')).
\end{equation*}
In conclusion, the upper limit on the number of iterations is specified by $j_*=j_n\wedge j_T$, where
\begin{equation*}
    j_n=\max\{j:3(j+1)C_0^{2j+1}\rho_n\leq\rho\}=O(\ln n),
\end{equation*}
\begin{equation*}
    j_T=\max\{0<j\leq j_n:T_0'+\cdots+T_{j-1}'<T\}.
\end{equation*}
From the explicit expression of $T_j'$, if $j<j_T$ then
\begin{equation*}
    T_j'=\frac{\rho^2}{\tilde{C}'}\ln\left(\frac{(3j+3)\tilde{C}'c_0\beta\rho_0+3\tilde{C}'\beta^2\rho_0^2}{(3j+2)\tilde{C}'c_0\beta\rho_0+3\tilde{C}'\beta^2\rho_0^2}\right)\wedge \frac{\rho^2}{\tilde{C}_A}\cdot\frac{(3j+1)C_0^{2j}\beta\rho_0}{\beta^2\rho_0^2+(3j+1)C_0^{2j}\beta\rho_0}e^{-5}.
\end{equation*}
Since $T_j'=O(j^{-1})\rho^2$, whence $\sum^{j_n}_{j=0}T_j'=O(\ln j_n)\rho^2=O(\ln \ln n)\rho^2$, recall that $\rho=\beta\rho_0/n$, then by choosing $n$ large enough, we have $\sum^{j_n}_{j=0}T_j'>T$, which means
\begin{equation*}
    j_*<j_n\quad and \quad\sum^{j_*}_{j=0}T_j'>T.
\end{equation*}
 This implies that $T_\rho'$ can cover $kT_E$ for any $k\in\mathbb{N}$, if we choose $n$ large enough, i.e., initial distance $|\tilde{P}^0_1-\tilde{P}^0_2|=\rho_0/n$ small enough.  Then the conclusion of Theorem \ref{thm-leapfrogging} follows immediately from the above discussion.
\end{proof}
\begin{remark}
    Notice that for small positive $C_E$, the actual orbit of $P_1(t)-P_2(t)$ is approximately elliptical, which is different from the trajectory which approximates a circle in \cite{BCM3}. So to ensure that (\ref{5-9}) holds true and then extends $T$ in (\ref{defT}), it is necessary to require that $\beta\in(0,c_0/C_0)$. Let us consider a special case: $(P_1^0)_2=(P_2^0)_2$, then the second entry of $P_1^0-P_2^0$ vanishes and we have $|x_E(0)|=C_0|P_1^0-P_2^0|$, and $\min_{t\in[0,T]}|P_1(t)-P_2(t)|\geq \min_{t\in[0,T]}|x_E(t)|/C_0\geq4\rho=4\beta\rho_0/n$ for any $\beta\in(0,1)$ if $n$ is large enough. That is, the constraint that $\beta \in(0,c_0/C_0)$ in Theorem \ref{thm-leapfrogging} and Lemma \ref{35} can be extended to $\beta\in(0,1)$.
\end{remark}

 \subsection*{Acknowledgments:}

 \par
 D. Cao and J. Fan were supported by  National Key R\&D Program of China (Grant 2022YFA1005602) and NNSF of China (Grant No. 12371212). G. Qin was supported by NNSF of China (Grant No.  12471190). J. Wan was supported by NNSF of China (Grants No. 12101045, 12471190).

 \subsection*{Conflict of interest statement} On behalf of all authors, the corresponding author states that there is no conflict of interest.

 \subsection*{Data availability statement} All data generated or analysed during this study are included in this published article  and its supplementary information files.

 \phantom{s}
 \thispagestyle{empty}


\end{document}